\theoremstyle{plain}
\newtheorem{thm}{Theorem}
\newtheorem{prop}[thm]{Proposition}
\newtheorem{cor}[thm]{Corollary}
\newtheorem{lem}[thm]{Lemma}
\newtheorem{clm}[thm]{Claim}
\newtheorem*{conjecture}{Conjecture}
\theoremstyle{remark}
\newtheorem*{rmk}{Remark}
\newtheorem*{rmks}{Remarks}
\theoremstyle{definition}
\newcommand{\C}{\mathbb C}
\DeclareMathOperator{\sinc}{sinc}
\DeclareMathOperator{\Leb}{Leb}
\DeclareMathOperator{\var}{Var}
\newcommand{\ind}{{\mathbbm{1}}}
\newcommand{\cN}{{\mathcal N}}
\newcommand{\cF}{{\mathcal F}}
\newcommand{\R}{\mathbb R}
\newcommand{\E}{\mathbb E}
\newcommand{\N}{\mathbb N}
\newcommand{\Z}{\mathbb Z}
\newcommand{\Pro}{\mathbb P}
\newcommand{\lm}{\lambda}
\newcommand{\si}{\sigma}
\newcommand{\ep}{\varepsilon}
\newcommand{\p}{\varphi}
\newcommand{\al}{\alpha}
\newcommand{\M}{\mathcal{M}(\R)}
\title[Variance of zeroes of a stationary Gaussian process]{An asymptotic formula for the variance of the number of zeroes of a stationary Gaussian process}
\author{Eran Assaf}
\author{Jeremiah Buckley}
\author{Naomi Feldheim}
\address{E. Assaf, Dartmouth College, New Hampshire, USA}
\email{eran.assaf@dartmouth.edu}
\address{J. Buckley, King's College, London, United Kingdom}
\email{jeremiah.buckley@kcl.ac.uk}
\address{N. Feldheim, Bar-Ilan University, Ramat-Gan, Israel}
\email{naomi.feldhim@biu.ac.il}
\date{}
\begin{document}
\begin{abstract}
We study the variance of the number of zeroes of a stationary Gaussian process on a long interval. We give a simple asymptotic description under mild mixing conditions. This allows us to characterise minimal and maximal growth. We show that a small (symmetrised) atom in the spectral measure at a special frequency does not affect the asymptotic growth of the variance, while an atom at any other frequency results in maximal growth. Our results allow us to analyse a large number of interesting examples. 

\vspace{5pt}
\noindent\emph{Key words:} Gaussian process, stationary process, fluctuations of zeroes, Wiener Chaos\\
\emph{Math. Subject Class. (2020)} {\bf primary:} 60G10, 60G15, {\bf secondary:} 05A19, 37A46, 42A38.
\end{abstract}
\maketitle

\section{Introduction}
%\subsection{Background}

Zeroes of Gaussian processes, and in particular stationary Gaussian processes (SGPs), have been widely studied, with diverse applications in physics and signal processing.
The expected number of zeroes may be computed by the celebrated Kac-Rice formula. Estimating the fluctuations, however, proved to be a much more difficult task. For a comprehensive historical account see \cite{Kratz}.

The aim of this paper is to give a simple expression which describes the growth of the variance of the number of zeroes in the interval $[0,T]$, as $T\to\infty$. Following the ideas of Slud~\cite{Slud94}, it is easy to give a lower bound for this quantity. Our main contribution is a matching upper bound, which holds under a very mild hypothesis. In particular we give a sharp asymptotic expression for the variance for any process with decaying correlations, no matter how slow the decay. %This allows us to analyse previously unknown cases and to improve upon previous results; see Section~\ref{sec: examples}. 

An intriguing feature of our results is the emergence of a `special frequency': adding an atom to the spectral measure at this frequency does not change the order of growth of the fluctuations.

\subsection{Results}
Let $f:\R\to\R$ be a stationary Gaussian process (SGP) with continuous \emph{covariance kernel} \[r(t) = \E[f(0) f(t)].\]
Denote by $\rho$ the \emph{spectral measure} of the process, that is, the unique finite, symmetric measure on $\R$ such that
\[
r(t) = \cF[\rho](t)= \int_\R e^{-i \lm t} d\rho(\lm).
\]
We normalise the process so that $r(0) =\rho(\R)= 1$. It is well-known (see, e.g., \cite{CL}*{Section 7.6}) that the distribution of $f$ is determined by $\rho$, and further that any such $\rho$ is the spectral measure of some SGP.

We study the number of zeroes of $f$ in a long `time' interval $[0,T]$, which we denote
\[
N(\rho;T) = N(T) =\# \{ t\in [0,T]: \: f(t)=0\}.
\]
The expectation of $N(T)$ is given by the Kac-Rice formula (see~\cites{Ylv,Ito})
\begin{equation}\label{eq: KR}
\E [N(T)] = \frac{\si}{\pi} T,
\end{equation}
where
\begin{equation*}
 \si^2 = -r''(0) = \int_\R \lm^2 d\rho(\lm).
\end{equation*}
Throughout we assume that $N(T)$ has finite variance, which turns out to be equivalent to the \emph{Geman condition}~\cite{Gem}
\begin{equation}\label{eq: Geman}
\int_0^{\ep} \frac{r''(t) - r''(0) }{t} dt < \infty\quad\text{for some}\quad\ep>0.
\end{equation}

\smallskip
An SGP $f$ is \emph{degenerate} if its spectral measure consists of a single symmetrised atom $\rho = \delta^*_\alpha = \frac 1 2 (\delta_{\alpha} + \delta_{-\alpha})$, or equivalently if the covariance is $r(t) = \cos(\al t)$. 
In this case the zero set is a random shift of the lattice $\frac {\pi}{\al}\Z$, and the variance $\var[N(T)]$ is bounded.
%\footnote{In fact $\var(N(T) ) = \left\{\frac{\si T}{\pi} \right\}\left(1-\left\{\frac{\si T}{\pi} \right\}\right)$, where $\{x\}$ denotes the fractional part of $x$. }

\smallskip
We formulate our results in terms of the function
\begin{equation} \label{eq: phi}
\p(t) = \max \left\{ |r(t)| + \frac{|r'(t)|}{\sigma}, \, \frac{|r'(t)|}{\sigma} + \frac{|r''(t)|}{\sigma^2} \right\}.
\end{equation}
The notation $A(T) \asymp B(T)$ denotes that there exist $C_1, C_2>0$ such that
$C_1 \le \frac{A(T)}{B(T)}\le C_2$ for all $T>0$, while $A(T)\sim B(T)$ denotes that
$\lim_{T\to\infty} \frac{A(T)}{B(T)} = 1$. Our main result is the following.

\begin{thm}\label{thm: UB}
For any SGP satisfying
\begin{equation}\label{eq: cond}
\limsup_{|t|\to\infty} \p(t) <1,
\end{equation}
we have
\begin{equation}\label{eq: var main}
\var[N(T) ] \asymp  T\int_{0}^T \left(1-\frac{t}{T}\right) \left(r(t)+\frac{r''(t)}{\si^2}\right)^2 \ dt
\end{equation}
where the implicit constants depend on $\rho$.
\end{thm}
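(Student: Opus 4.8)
The lower bound is already supplied by Theorem~\ref{thm: LB}, so the task is the matching upper bound, and the plan is to use the Wiener chaos expansion of $N(T)$. Write $X(t)=f(t)$ and $Y(t)=f'(t)/\si$, so that $(X(t),Y(t))$ is a stationary Gaussian process with standard marginals, $X(t)\perp Y(t)$ for each fixed $t$, and with pairwise correlations $\E[X(0)X(\tau)]=r(\tau)$, $\E[X(0)Y(\tau)]=r'(\tau)/\si$, $\E[Y(0)Y(\tau)]=-r''(\tau)/\si^2$. By Kratz--L\'eon \cite{KL06,KL10} (whose expansion holds under essentially no hypotheses) one has $N(T)-\E[N(T)]=\sum_{q\ge1}I_q(T)$ in $\mathcal L^2$, with $I_q(T)$ the projection onto the $q$-th Wiener chaos; since the Hermite coefficients of $\delta_0$ and of $|\cdot|$ are supported on even indices, only even $q$ appear, $I_q(T)=\sum_{k+m=q}c_{k,m}\int_0^T H_k(X(t))H_m(Y(t))\,dt$ with explicit constants $c_{k,m}$, and by orthogonality $\var[N(T)]=\sum_{q\ge2}\E[I_q(T)^2]$. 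By stationarity and the diagram formula, $\E[I_q(T)^2]=2T\int_0^T(1-\tfrac{\tau}{T})G_q(\tau)\,d\tau$, where $G_q$ is a fixed homogeneous polynomial of degree $q$ in the three functions $r(\tau),\ r'(\tau)/\si,\ r''(\tau)/\si^2$; the combinatorial growth of the number of diagrams is offset by the decay of the $c_{k,m}$, so the coefficients of $G_q$ grow at most polynomially in $q$.

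First I would isolate the leading term $q=2$. A direct computation gives $c_{2,0}=-c_{0,2}=-\si/(2\pi)$, whence $\E[I_2(T)^2]=\tfrac{\si^2}{\pi^2}T\int_0^T(1-\tfrac{\tau}{T})\bigl(r^2-2(r'/\si)^2+(r''/\si^2)^2\bigr)\,d\tau$. Since $r^2-2(r'/\si)^2+(r''/\si^2)^2=(r+r''/\si^2)^2-\tfrac{2}{\si^2}(rr')'$ and $r(0)=1$, $r'(0)=0$, integrating the last term by parts against $(1-\tfrac{\tau}{T})$ shows it contributes only $O(1)$, so $\E[I_2(T)^2]=\tfrac{\si^2}{\pi^2}T\int_0^T(1-\tfrac{\tau}{T})(r+r''/\si^2)^2\,d\tau+O(1)$. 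By Corollary~\ref{cor: LB} the right-hand side of \eqref{eq: var main} is $\gtrsim T$, so this $O(1)$ is negligible and $\E[I_2(T)^2]$ is comparable to it.

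It remains to show $\sum_{q\ge4}\E[I_q(T)^2]\lesssim T\int_0^T(1-\tfrac{\tau}{T})(r+r''/\si^2)^2\,d\tau$, which is the heart of the argument. Using \eqref{eq: cond}, fix $t_0$ and $\delta<1$ with $\varphi(\tau)\le\delta$ for $|\tau|\ge t_0$. On $[0,t_0]$ one lacks smallness, but there $\sum_{q\ge4}G_q$ equals the Cram\'er--Leadbetter covariance density minus $G_2$, whose integral over $[0,t_0]$ is finite and independent of $T$ by the Geman condition \eqref{eq: Geman}; hence this range contributes $O(T)$, again absorbed via Corollary~\ref{cor: LB}. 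The real difficulty is the range $[t_0,T]$: there the crude diagram bound $|G_q(\tau)|\lesssim\mathrm{poly}(q)\,\psi(\tau)^q$, with $\psi=\max\{|r|,|r'|/\si,|r''|/\si^2\}\le\varphi$, makes $\sum_{q\ge4}$ converge but is \emph{not} sharp --- it misses the cancellation responsible for the factor $(r+r''/\si^2)^2=\bigl(\cF[(1-\lm^2/\si^2)\rho]\bigr)^2$, which can be far smaller than $\psi^2$ (e.g.\ when $\rho$ concentrates near $\pm\si$). To capture this I would re-sum the diagrams for each fixed $q$: using the algebraic relations among the $c_{k,m}$ (such as $c_{2,0}=-c_{0,2}$ and their higher analogues) together with the identity $\tfrac{d}{d\tau}\bigl(r^2+(r'/\si)^2\bigr)=2\,r'\,(r+r''/\si^2)$, one rewrites $G_q$ --- modulo total derivatives and terms of bounded $\tau$-integral --- in a form carrying an explicit factor $(r+r''/\si^2)^2$ (this is natural, since for the degenerate process $r+r''/\si^2\equiv0$ and every $G_q$ then has bounded integral). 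Concretely, the target estimate is, for $q\ge4$,
\[
\int_0^T\Bigl(1-\tfrac{\tau}{T}\Bigr)G_q(\tau)\,d\tau\ \lesssim\ \mathrm{poly}(q)\,\delta^{\,q-4}\int_0^T\Bigl(1-\tfrac{\tau}{T}\Bigr)(r+r''/\si^2)^2\,d\tau\ +\ \mathrm{poly}(q)\,\delta^{\,q},
\]
and summing over $q$ (the geometric factor $\delta^{q}$ ensures convergence, and the leftover $O(T)$ is absorbed by Corollary~\ref{cor: LB}) yields the upper bound; combined with Theorem~\ref{thm: LB} this gives \eqref{eq: var main}. The main obstacle is exactly this re-summation: making the cancellation in the higher chaoses quantitative and uniform in $q$, so that the quadratic functional $\int(1-\tfrac{\tau}{T})(r+r''/\si^2)^2$ --- and not $\int(1-\tfrac{\tau}{T})\varphi^2$ --- governs the tail of the chaos series.
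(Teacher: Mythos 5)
Your outline follows the paper's proof almost step for step: chaos expansion \`a la Kratz--L\'eon, the diagram formula giving $\E[\pi_{2q}(N(T))^2]=\int_{-T}^T(T-|t|)\widetilde P_q(r,r'/\si,r''/\si^2)\,dt$, the correction of the leading chaos by an exact second derivative (your identity $r^2-2(r'/\si)^2+(r''/\si^2)^2=(r+r''/\si^2)^2-\tfrac2{\si^2}(rr')'$ is precisely how the paper passes from $\widetilde P_1$ to $P_1=2(x+z)^2$), the splitting of the $t$-range at a fixed $T_0$ with the near-origin part absorbed as $O(T)$ via Corollary~\ref{cor: LB}, and finally a bound of the form $\mathrm{poly}(q)\,\delta^{q}$ times the quadratic functional of $r+r''/\si^2$ for the tail of the chaos series. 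However, the step you yourself flag as ``the main obstacle'' --- making the cancellation in the higher chaoses quantitative and uniform in $q$ --- is exactly the mathematical content of the theorem, and your proposal does not supply it. It is not enough to observe that the cancellation ``should'' be there by analogy with the degenerate process; one must prove that, after adding the explicit exact-derivative correction $c_q\bigl(x^{2q-1}z+(2q-1)x^{2q-2}y^2\bigr)$ (whose integral against $T-t$ is computable and bounded), the resulting polynomial $P_q$ is \emph{exactly divisible} by $(x+z)^2$, and then bound the quotient.

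The paper fills this gap in two nontrivial pieces. First, Proposition~\ref{prop: divisibility} establishes $(x+z)^2\mid P_q$ for every $q$; by dehomogenisation this reduces to the vanishing of $P_q(-1,y,1)$ and $\partial_xP_q(-1,y,1)$, i.e.\ to a family of hypergeometric multi-sum identities in the coefficients $a_q(l)$, $b_q(l_1,l_2,n)$, which are proved by producing certificate recurrences with the multivariate Zeilberger algorithm (Section~\ref{section: proof of divisibility}). There is no soft argument known that yields this; your suggested route via relations like $\tfrac{d}{d\tau}\bigl(r^2+(r'/\si)^2\bigr)=2r'(r+r''/\si^2)$ handles $q=1$ but does not obviously propagate to all $q$. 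Second, Proposition~\ref{prop: Crude Arcones-type bound} gives the quantitative control: since $P_q$ and $\partial_xP_q$ vanish at $x=-z$, Taylor expansion gives $P_q(x,y,z)=\tfrac12\partial_x^2P_q(t,y,z)(x+z)^2$, and an Arcones-type estimate bounds $\sup|\partial_x^2P_q|$ on the domain $\{|x|+|y|\le M,\ |y|+|z|\le M\}$ by $Cq^{3/2}4^qM^{2q-2}$ --- note that the relevant quantity is $\varphi=\max\{|r|+|r'|/\si,\ |r'|/\si+|r''|/\si^2\}$, not $\psi=\max\{|r|,|r'|/\si,|r''|/\si^2\}$ as in your sketch, and this distinction is what makes hypothesis \eqref{eq: cond} (rather than $\psi<\tfrac12$) suffice. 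Dividing by the $4^q$ from the chaos normalisation then yields the summable factor $q^{3/2}M^{2q-2}$ with $M<1$. Without these two ingredients your argument is a correct road map but not a proof.
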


The condition \eqref{eq: cond} may be viewed as a very mild mixing condition, which in particular holds whenever the spectral measure is absolutely continuous. In fact, the condition $r(t)\overset{|t|\to\infty}{\longrightarrow} 0$ implies that $\varphi(t)\overset{|t|\to\infty}{\longrightarrow}0$. 

Under some additional conditions, we are able to compute the leading constant in~\eqref{eq: var main}.
\begin{thm} \label{thm : asymp constant}
Suppose that $r+\frac{r''}{\si^2} \not\in \mathcal{L}^2(\R)$, and
$\lim_{|t|\to\infty} \p(t) = 0$.
Then
\[
\var[N(T) ] \sim \frac{\sigma^2}{\pi^2} T\int_{0}^T \left(1-\frac{t}{T}\right) \left(r(t)+\frac{r''(t)}{\si^2}\right)^2 dt .
\]
\end{thm}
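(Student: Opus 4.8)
The plan is to run the Wiener--chaos expansion of $N(T)$, available under the Geman condition by the work of Kratz and L\'eon invoked above, and to show that the second chaos already produces the claimed limit while every higher chaos is of strictly smaller order. Write $N(T)=\sum_{q\ge0}N_q(T)$, where $N_q(T)=\int_0^T\Phi_q(s)\,ds$ and $\Phi_q$ is the degree-$q$ Hermite component of the formal Kac--Rice density $\sigma\,|f'(s)/\sigma|\,\delta_0(f(s))$. As $\delta_0$ and $|\cdot|$ have only even Hermite components, only even $q$ occur, $N_0(T)=\E[N(T)]$, and $\var[N(T)]=\sum_{q\ge2}\var[N_q(T)]=\sum_{q\ge2}\int_{-T}^T(T-|t|)\,\rho_q(t)\,dt$ with $\rho_q(t)=\E[\Phi_q(0)\Phi_q(t)]$; each $\rho_q$ equals $P_q(M(t))$ for a fixed homogeneous polynomial $P_q$ of degree $q$ in the entries of the normalised cross-covariance matrix $M(t)=\bigl(\begin{smallmatrix} r(t) & -r'(t)/\sigma\\ r'(t)/\sigma & -r''(t)/\sigma^2\end{smallmatrix}\bigr)$. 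A direct evaluation of $\rho_2$ (the computation behind Theorem~\ref{thm: LB}) gives the exact identity
\[
\var[N_2(T)]=\frac{\sigma^2}{\pi^2}\,T\int_0^T\Bigl(1-\tfrac tT\Bigr)\Bigl(r(t)+\tfrac{r''(t)}{\sigma^2}\Bigr)^2dt+\frac{1-r(T)^2}{\pi^2}.
\]
Put $g=r+r''/\sigma^2$ and $V(T)=\int_0^T(1-t/T)g^2$. The hypothesis $g\notin\mathcal L^2(\R)$ forces $V(T)\to\infty$, while $|g|\le2\varphi$ (immediate from the definition of $\varphi$) together with $\varphi(t)\to0$ forces $V(T)\le\int_0^Tg^2=o(T)$; hence $T\ll TV(T)\ll T^2$, every $O(1)$ and every $O(T)$ is $o(TV(T))$, and $\var[N_2(T)]\sim\tfrac{\sigma^2}{\pi^2}TV(T)$. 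Since $\var[N(T)]\ge\var[N_2(T)]$, the lower half of the asymptotics is already in hand, and it remains to prove $\sum_{q\ge4}\var[N_q(T)]=o(TV(T))$.

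The key algebraic step is to factor $g$ out of the higher chaoses. Write $M(t)=M^*(t)-g(t)J$ with $J=\mathrm{diag}(0,1)$ and $M^*(t)=\bigl(\begin{smallmatrix} r & -r'/\sigma\\ r'/\sigma & r\end{smallmatrix}\bigr)$, i.e. $M^*(t)$ is obtained from $M(t)$ by the substitution $r''/\sigma^2\rightsquigarrow-r$, which replaces the covariance data by that of a degenerate process (this is why $g=\mathcal F[\mu]$, Remark~4 after Theorem~\ref{thm: LB}, governs the correction). Taylor-expanding in the $(2,2)$-entry, $\rho_q(t)=P_q(M^*(t))+\sum_{j\ge1}\frac{(-g(t))^j}{j!}(\partial_{22}^{\,j}P_q)(M^*(t))$. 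Now $M^*(t)=|z(t)|R(\arg z(t))$ is a rotation--scaling matrix attached to $z(t)=r(t)+i\,r'(t)/\sigma$ (with $R(\theta)$ the rotation by $\theta$), so $|z(t)|=\sqrt{r^2+r'^2/\sigma^2}\le\varphi(t)$ and $P_q(M^*(t))=|z(t)|^q\,p_q(\arg z(t))$ where $p_q(\theta)=P_q(R(\theta))$ is the degree-$q$ chaos covariance density of a degenerate process. Because a degenerate process has variance $\le\tfrac14$, a nonzero mean of $p_q$ would force $\var[N_q^{\mathrm{deg}}(T)]\asymp T^2$, so $p_q$ has vanishing mean for every $q\ge2$ --- this is the cancellation that kills the naive $r(t)^q$-type leading term (and for the same reason $(\partial_{22}P_q)(R(\cdot))$, an odd-degree trigonometric polynomial, has vanishing mean). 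Since $\partial_{22}^{\,j}P_q$ is homogeneous of degree $q-j$ and the Hermite coefficients grow at most geometrically, for $\varphi(t)$ small one gets $\bigl|\sum_{q\ge4}\sum_{j\ge2}\tfrac{(-g)^j}{j!}(\partial_{22}^{\,j}P_q)(M^*)\bigr|\le C(\rho)\,\varphi(t)^2g(t)^2$, the first-order part equals $-g(t)\sum_{q\ge4}|z(t)|^{q-1}(\partial_{22}P_q)(R(\arg z(t)))$ with modulus $\le C(\rho)|g(t)|\varphi(t)^3$, and the slow-variation bound $\bigl|\tfrac{d}{dt}|z(t)|^q\bigr|\le q\sigma\,\varphi(t)^q$ holds.

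It remains to integrate against $(T-|t|)$ over $[-T,T]$ and exploit $\varphi(t)\to0$. Split at $|t|=A$ and set $\varepsilon_A=\sup_{|t|>A}\varphi(t)\to0$. Over $|t|\le A$ the connected two-point density of the zero set is integrable near the origin by the Geman condition, so this part is $T\cdot O_A(1)$, which is $o(TV(T))$ provided $A=A(T)$ grows slowly enough. Over $A<|t|\le T$, where $\varphi\le\varepsilon_A$, the second-and-higher-order correction contributes at most $C\varepsilon_A^2\int_{-T}^T(T-|t|)g^2=2C\varepsilon_A^2\,TV(T)=o(TV(T))$. The remaining two pieces --- the degenerate-like part $\int_{A<|t|<T}(T-|t|)\sum_{q\ge4}P_q(M^*(t))\,dt$ and the first-order correction --- are \emph{not} small in modulus, and here one must use the vanishing mean of $p_q$ (resp.\ of $(\partial_{22}P_q)(R(\cdot))$) together with the slow variation of the envelope $|z(t)|$: a summation/integration by parts, anchored to the uniform bound $\int_{-T}^T(T-|t|)\,p_q(\sigma t)\,dt=\var[N_q^{\mathrm{deg}}(T)]\le\tfrac14$, should reduce each of these to an $O(T)$ contribution (for the first-order piece the extra factor $g$ only helps). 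Summing the resulting geometric series in $\varepsilon_A$ and combining with the first paragraph yields $\sum_{q\ge4}\var[N_q(T)]=o(TV(T))$, hence the theorem.

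The step I expect to be hardest is precisely the last one: turning the zero-mean oscillation of the factors $p_q$ and $(\partial_{22}P_q)(R(\cdot))$ into a genuine $O(T)$ bound for $\int_{A<|t|<T}(T-|t|)\sum_{q\ge4}P_q(M^*(t))\,dt$ and its first-order analogue, with no quantitative control on the decay of $\varphi$ --- it is exactly here that $\varphi(t)\to0$ is used in full strength rather than the weaker $\limsup_{|t|\to\infty}\varphi(t)<1$ of Theorem~\ref{thm: UB}, and it is the real content of the argument. A secondary, purely technical complication is that $\Phi$ fails to be square-integrable, so the chaos components $\Phi_q$ and all the identities for $\rho_q$ above must be justified by regularising $\delta_0$ (e.g.\ by $\tfrac1{2\epsilon}\ind_{[-\epsilon,\epsilon]}$) and passing to the limit, in the manner of Kratz and L\'eon.
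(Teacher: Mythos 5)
Your overall architecture is the same as the paper's: chaos expansion, the second chaos supplies the main term $\frac{\sigma^2}{\pi^2}TV(T)$, the higher chaoses are controlled by factoring $(r+r''/\sigma^2)^2$ out of the covariance polynomials, and the time integral is split at a threshold beyond which $\varphi<\varepsilon$, with the near part contributing $O(T)=o(TV(T))$ because $g\notin\mathcal{L}^2$ forces $V(T)\to\infty$. But there is a genuine gap at exactly the step you flag as hardest, and the fact you would need there is stronger than the one you have identified. In the paper, after adding the explicit correction $c_q\big(x^{2q-1}z+(2q-1)x^{2q-2}y^2\big)$ --- which is a total second derivative $c_q\frac{d^2}{dt^2}\big[r^{2q}/(2q)\big]$ and hence integrates against $(T-|t|)$ to a bounded quantity --- the corrected polynomial $P_q$ is divisible by $(x+z)^2$ \emph{exactly} (Proposition~\ref{prop: divisibility}, a nontrivial combinatorial identity proved via Zeilberger's algorithm). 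Consequently $P_q(x,y,-x)\equiv 0$ and $\partial_z P_q(x,y,-x)\equiv 0$: the zeroth- and first-order terms of your Taylor expansion about $M^*$ vanish \emph{identically}, not merely in the mean. Your observation that $p_q$ has vanishing mean (because a degenerate process has bounded variance) is only a weak shadow of this identity, and it does not suffice: it constrains one Fourier coefficient of $p_q$, while what is true and needed is $p_q\equiv 0$ together with the vanishing of the first $z$-derivative.

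Without that identity, the summation-by-parts argument you substitute for it does not go through as described. The envelope $|z(t)|$ is slowly varying, but the phase $\arg z(t)$ is not: $z=r+ir'/\sigma$ is the Fourier transform of the measure $(1+\lambda/\sigma)\,d\rho(\lambda)$, so its phase need not track $-\sigma t$ at all, and its derivative blows up where $|z|$ is small; moreover you have no rate on $\varphi(t)\to0$, and you would need the resulting $O(T)$ bound uniformly enough in $q$ to sum the series. Your own modulus bounds confirm the step is load-bearing: for the first-order piece, $\int(T-|t|)|g|\varphi^3$ is not $o(TV(T))$ in general (take $V(T)\asymp\log T$, as for the Bessel kernel $J_0$), so the oscillation really must be exploited --- or, as in the paper, shown to be absent. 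A secondary gap is the phrase ``the Hermite coefficients grow at most geometrically'': the paper needs the calibrated bound $|P_q(x,y,z)|/(x+z)^2\le Cq^{3/2}4^q\varphi^{2q-2}$ (an Arcones-type estimate on $\partial_x^2P_q$) so that, after multiplying by the chaos weights $4^{-q}$, the tail of the series over $q$ is $O(\varepsilon^2)V_1(T)$ on $\{|t|>T_0(\varepsilon)\}$; a generic geometric bound with the wrong ratio would not close the argument.
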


In fact, the lower bound in this theorem holds for any process. We state this separately.
%To see that $r(t)\to 0$ implies that $\varphi(t)\to 0$ note first that $r'$ and $r''$ are uniformly continuous. Now suppose that there exists an $\ep>0$ and a sequence $\{t_n \}_{n=1}^{\infty}$ such that $|r'(t_n)| > 2 \ep$ for all $n$ and $t_n \rightarrow \infty$. By the uniform continuity of $r'$ we get
%        \begin{equation*}
%          \left | r'(t) - r'(t_n) \right | < \ep
%        \end{equation*}
%        for $|t - t_n| < \delta$. Hence $ |r'(t)| \ge |r'(t_n)| - |r'(t)-r'(t_n)| > \ep$ and so
%        \begin{equation*}
%          \left | \int_{t_n - \delta}^{t_n + \delta} r'(t) \ dt \right | = %\int_{t_n-\delta}^{t_n+\delta} |r'(t)| dt \ge 2 \ep \delta > 0.
%        \end{equation*}
%        But, we can also compute
%        \begin{equation*}
%          \lim_{n \rightarrow \infty}  \left | \int_{t_n-\ep}^{t_n+\ep} r'(t) %\ dt \right |= \lim_{n \rightarrow \infty} |r(t_n + \ep) - r(t_n-\ep)| = 0,
%        \end{equation*}
%        which is absurd. The same proof shows that $r''\to 0$.

\begin{prop} \label{prop: LB}
For any SGP,
\[
\var[N(T)] \ge  \frac{\sigma^2}{\pi^2} T\int_{0}^T \left(1-\frac{t}{T}\right) \left(r(t)+\frac{r''(t)}{\si^2}\right)^2 \ dt.
\]
In particular, for any non-degenerate SGP there exists a constant $C=C(\rho)>0$ such that
\begin{equation}\label{eq: linlowerbound}
  \var[N(T)] \ge C T, \quad \forall T>0.
\end{equation}
\end{prop}

The variance for non-degenerate SGPs is therefore always at least linear in $T$. By stationarity, it is at most quadratic. Our next result characterises these two extremes.

\begin{thm}\label{thm: extremal} 
\leavevmode
\begin{enumerate}[label=(\alph*)]
\item\label{part: lin}
Suppose that condition \eqref{eq: cond} holds. Then
\[ \var[N(T)] \asymp T\iff r+ \frac{r''}{\si^2} \in \mathcal{L}^2(\R).\]
\item\label{part: quad} $\var[N(T)]  \asymp T^2$ if and only if $\rho$ contains an atom at a point different from $\si$.
\end{enumerate}
\end{thm}

%A special case of interest is when the variance is asymptotically linear. We believe that proving a version of our next result without assuming the condition \eqref{eq: cond} would be very desirable; in fact one direction is already clear from Proposition~\ref{prop: LB}.

The emergence of a special frequency $\si$ in Theorem~\ref{thm: extremal}~\ref{part: quad} is new, and intriguing. One naturally asks what the effect of an atom at this frequency is. Notice that modifying a measure by adding an atom at frequency $\si$ does not change $\E [N(T)]$. The following result follows from Theorem~\ref{thm: UB}, and shows that the asymptotic growth of $\var N(T)$ remains unchanged as well --- at least under some mild assumptions.

\begin{cor} \label{cor: special atom}
Suppose that \eqref{eq: cond} holds for the spectral measure $\rho$. Define\footnote{Notice that $\E[N(\rho_\theta; T)]$ is independent of $\theta$.} $\rho_\theta=(1-\theta) \rho +  \theta \delta^*_\si$ for $0<\theta<1$. There exists $\theta_0>0$ such that
\[
\var[N(\rho; T)] \asymp \var[ N(\rho_\theta; T)]
\]
for any $\theta<\theta_0$ (and the implicit constants may depend on $\theta$). Moreover, $\theta_0$ depends only on $\limsup_{|t| \rightarrow \infty} \varphi(t)$.
\end{cor}

\subsection{Discussion}

It is evident from our results that the decay of $r(t)+\frac{r''(t)}{\si^2}$ determines the variance of $N(T)$. 
Observe that $r + \frac{r''}{\sigma^2}=\cF[\mu]$ where the signed measure $\mu$ is defined by $d\mu(\lm)=\left(1 - \frac{\lambda^2}{\sigma^2} \right) d \rho(\lambda)$; this is crucial to some of our proofs. In fact, it follows from Parseval's identity that
    \begin{equation}\label{eq: parseval for key int}
    \int_{0}^T \left(1-\frac{t}{T}\right) \left(r(t)+\frac{r''(t)}{\si^2}\right)^2 \ dt = \pi  \int (\mathcal{S}_T * \mu)\ d\mu
    \end{equation}
where $\mathcal{S}_T(\lambda) =  \frac{T}{2\pi} \  \sinc^2\left(\frac{T \lm}{2}\right).$ For details, see Section~\ref{sec: quad}. 
Cancellation between the terms of $r(t) + \frac{r''(t)}{\si^2}$ plays an important r\^{o}le in determining the variance. This is further emphasised in Section~\ref{sec: Cancellation}. One consequence of this cancellation is the emergence of the special atom (in the sense of Theorem~\ref{thm: extremal}~\ref{part: quad} and Corollary~\ref{cor: special atom}). This phenomenon is explained, in part, by the fact that the measure $\mu$ does not `see' $\si$.

For crossings of non-zero levels, the presence of an atom at any frequency leads to quadratic variance. The existence of a special atom at a distinguished frequency is therefore unique to the zero level. Furthermore, this phenomenon is purely real. No such frequency exists for complex zeroes, see~\cite{Nvar}.
    
We remark that, following Arcones \cite{A}, many previous results were stated in terms of the function
        \begin{equation*}
          \psi(t) = \max\left\{|r(t)|, \frac{|r'(t)|}{\si}, \frac{|r''(t)|}{\si^2} \right\}
        \end{equation*}
        rather than the function $\p$ that we introduced in \eqref{eq: phi}. For purposes of comparison, our assumption \eqref{eq: cond} is implied by the stronger assumption  $\limsup_{|t|\to\infty} \psi(t) <\frac 1 {2}$.

While the condition \eqref{eq: cond} is a very mild mixing condition, there are some processes with singular spectral measure for which it does not hold. We believe that the result holds in greater generality.

\begin{conjecture}[Weak form]
The estimate \eqref{eq: var main} holds for any non-degenerate SGP satisfying
\begin{equation}\label{eq: old phi}
  \limsup_{|t|\to\infty} \max \left\{ r(t)^2 + \frac{r'(t)^2}{\sigma^2}, \, \frac{r''(t)^2}{\sigma^4} + \frac{r'(t)^2}{\sigma^2} \right\}<1.
\end{equation}
\end{conjecture}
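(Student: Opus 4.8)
The lower half of \eqref{eq: var main} is the unconditional Theorem~\ref{thm: LB}, so the conjecture reduces to the matching \emph{upper} bound under the weaker hypothesis \eqref{eq: old phi}. I would attack this through the Wiener chaos expansion underlying Theorem~\ref{thm: UB}. Writing the zero-counting density formally as $\sigma\,|\tilde f'(t)|\,\delta_0(f(t))$ with $\tilde f'=f'/\sigma$, and expanding in the Hermite basis of the pair $(f(t),\tilde f'(t))$ --- a standard Gaussian vector, since $r'(0)=0$ --- one obtains an orthogonal decomposition $\var[N(T)]=\sum_{q\ge 2}\var[I_q(T)]$, the odd chaoses being absent by symmetry and the Hermite rank equal to $2$ because $H_1(0)=0$. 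By the Parseval identity \eqref{eq: parseval for key int} the second chaos is comparable to the right-hand side of \eqref{eq: var main}, equalling (up to a bounded error) a constant times $\int(\mathcal{S}_T*\mu)\,d\mu$ with $d\mu=(1-\lambda^2/\sigma^2)\,d\rho$; crucially this expression is \emph{quadratic in the signed measure $\mu$}, so it carries the cancellation that lets it be far smaller than $\int\big(r^2+(r')^2/\sigma^2+(r'')^2/\sigma^4\big)$. The whole task is therefore to prove $\sum_{q\ge4}\var[I_q(T)]\lesssim\var[I_2(T)]$ by an estimate that \emph{preserves} this cancellation: a crude majorant that discards the signs would overshoot the cancellation-sensitive target.

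All chaos densities are governed by the $2\times2$ cross-correlation matrix of $(f,\tilde f')$ at the two times,
\[
C(\tau)=\begin{pmatrix} r(\tau) & r'(\tau)/\sigma \\ -\,r'(\tau)/\sigma & -\,r''(\tau)/\sigma^2\end{pmatrix},
\]
in the sense that $\var[I_q(T)]=\int_0^T\!\!\int_0^T b_q(t-s)\,ds\,dt$ with $b_q$ a homogeneous degree-$q$ diagram sum in the entries of $C$. Positive-definiteness of the process forces $\norm{C(\tau)}_{\mathrm{op}}\le 1$ for every $\tau$, with equality exactly in the degenerate single-cosine case; the tail $\sum_{q\ge4}|b_q(\tau)|$ is comparable to $\norm{C(\tau)}_{\mathrm{op}}^4$ while $\norm{C}_{\mathrm{op}}$ stays away from $1$, but degenerates as $\norm{C(\tau)}_{\mathrm{op}}\to1$. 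This is exactly where the two hypotheses part ways. Condition \eqref{eq: cond} controls the maximal row $\ell^1$-norm $\p$, and since $\norm{C}_{\mathrm{op}}\le\sqrt{\norm{C}_1\,\norm{C}_\infty}=\p$ it keeps $\norm{C(\tau)}_{\mathrm{op}}$ bounded away from $1$, so the tail decays geometrically (Proposition~\ref{prop: Crude Arcones-type bound} uses the even stronger $\psi<\tfrac12$). Condition \eqref{eq: old phi}, by contrast, bounds only the squared row $\ell^2$-norms $M_{11}=r^2+(r')^2/\sigma^2$ and $M_{22}=(r')^2/\sigma^2+(r'')^2/\sigma^4$, the diagonal entries of $M=CC^{\mathsf T}$; as $\max_i\sqrt{M_{ii}}\le\norm{C}_{\mathrm{op}}\le\sqrt2\,\max_i\sqrt{M_{ii}}$, it does \emph{not} keep $\norm{C}_{\mathrm{op}}$ away from $1$, because the top eigenvalue also feels the off-diagonal $M_{12}=-(r'/\sigma)\big(r+r''/\sigma^2\big)$. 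Hence the geometric tail estimate breaks down, which is precisely why the statement remains a conjecture.

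The observation I would build on is the identity $M_{12}=-(r'/\sigma)\big(r+r''/\sigma^2\big)$: in the dangerous regime, where $\norm{C(\tau)}_{\mathrm{op}}\to1$ while \eqref{eq: old phi} holds with a gap $\delta>0$, one must have $|M_{12}|\gtrsim\delta$, and since $|r'/\sigma|\le1$ this forces $\big(r(\tau)+r''(\tau)/\sigma^2\big)^2\gtrsim\delta^2$. In words: wherever the tail threatens to blow up, the \emph{target} integrand $(r+r''/\sigma^2)^2$ is itself bounded below, so the two may compensate. The plan is to make this quantitative as a pointwise comparison $\sum_{q\ge4}|b_q(\tau)|\lesssim_\delta\big(r(\tau)+r''(\tau)/\sigma^2\big)^2\,\Theta(\tau)$ with a factor $\Theta$ that is controlled under \eqref{eq: old phi} and integrable against the Fej\'er kernel, after which integrating and invoking \eqref{eq: parseval for key int} would give \eqref{eq: var main}. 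A parallel, spectral route is to expand $\var[I_q(T)]$ as $\pi\int(\mathcal{S}_T*\nu_q)\,d\mu$ for signed measures $\nu_q$ built from convolution powers of $\rho$ weighted by the $H_m(0)$ and the Hermite coefficients of $|\,\cdot\,|$, so that two factors of $\mu$ --- and hence the cancellation --- sit at the ends of every term, and then to bound the total masses of the $\nu_q$ using \eqref{eq: old phi}.

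The main obstacle is to make the compensation quantitative: one must pin down the exact rate at which the Kac--Rice two-point function blows up as $C(\tau)$ approaches the degenerate locus and prove it is dominated, after integration, by $(r+r''/\sigma^2)^2$ uniformly in the gap $\delta$. Equivalently, on the spectral side, one must show that \emph{every} chaos $b_q$ --- and not only the second, where the factorisation is visible in \eqref{eq: parseval for key int} --- carries two factors of the cancelling measure $\mu$ at its ends, with the intervening $\rho$-convolutions summable precisely under the row $\ell^2$ bound \eqref{eq: old phi} rather than the operator-norm bound \eqref{eq: cond}. Producing such a structural factorisation beyond $q=2$, or a softer argument that sidesteps it, is the crux I expect any proof to turn on, and its absence is what confines the present methods to \eqref{eq: cond}.
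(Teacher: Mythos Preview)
This statement is a \emph{conjecture}; the paper does not prove it and offers only heuristic evidence in Section~\ref{section: conjectures}. Your proposal is likewise a programme rather than a proof, and you say so in your final paragraph --- that is the correct assessment. Your central observation, that the off-diagonal entry of $M=CC^{\mathsf T}$ factors as $M_{12}=-(r'/\sigma)\big(r+r''/\sigma^2\big)$ and that $|M_{12}|\ge\lambda_{\max}(M)-\max(M_{11},M_{22})$, so that $|r+r''/\sigma^2|\gtrsim\delta$ wherever $\|C(\tau)\|_{\mathrm{op}}$ is close to $1$ under \eqref{eq: old phi}, is correct and illuminating. It is a sharper formulation of what the paper sketches in Section~\ref{section: conjectures}: there the authors study $R_q=P_q/(x+z)^2$ on $D=\{x^2+y^2\le1,\ y^2+z^2\le1\}$, compute $R_q|_{z=-x}=2^{2q-1}(x^2+y^2)^{q-1}$, observe that the troublesome region is near $x=z$, and appeal to unspecified ``hidden structure'' to keep $(r,r'/\sigma,r''/\sigma^2)$ away from it. Your $M_{12}$ identity makes that structure explicit. (One correction: Proposition~\ref{prop: Crude Arcones-type bound} works with $M=\varphi$ and yields summability under $\varphi<1$, i.e.\ exactly \eqref{eq: cond}; the alternative $\psi<\tfrac12$ is mentioned only as a weaker variant in the remarks after Theorem~\ref{thm: UB}.)

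The gap you identify is genuine and is why this remains open. A lower bound on $(r+r''/\sigma^2)^2$ over the dangerous set does not by itself dominate the tail: one would need a pointwise inequality of the form $\sum_{q\ge2}4^{-q}\big|P_q(r,r'/\sigma,r''/\sigma^2)\big|\le C_\delta\,(r+r''/\sigma^2)^2$ with $C_\delta$ independent of $\tau$, and since the left-hand side can be arbitrarily large as $\|C(\tau)\|_{\mathrm{op}}\to1$ this demands a quantitative comparison between the blow-up rate of the two-point density and the size of $|r+r''/\sigma^2|$ near the degenerate locus. Neither your sketch nor the paper supplies such an estimate; your proposed ``spectral route'' (two factors of $\mu$ in every chaos) is suggestive but equally unestablished. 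In short, your diagnosis matches and in one respect sharpens the paper's, but no proof exists on either side.
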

\begin{conjecture}[Strong form]
The estimate \eqref{eq: var main} holds for any non-degenerate SGP.
\end{conjecture}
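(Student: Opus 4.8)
The plan is to establish the two bounds of the relation $\asymp$ separately. The lower bound is already Theorem~\ref{thm: LB}: that result gives $\var[N(T)] \ge \frac{\si^2}{\pi^2}\,T\int_0^T (1-t/T)\,h(t)^2\,dt$, where throughout I abbreviate $h=r+r''/\si^2$, so only the upper bound $\var[N(T)]\lesssim T\int_0^T(1-t/T)h^2\,dt$ remains. I would run this through the Wiener chaos expansion of the counting functional in the spirit of Kratz--L\'eon, writing $N(T)=\sum_{q\ge 0}I_q(T)$ with $I_q(T)$ the projection onto the $q$-th chaos, so that $\var[N(T)]=\sum_{q\ge 1}\var[I_q(T)]$ and the odd terms drop out because $N(T)$ is an even functional of $f$. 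Since $f(t)$ and $f'(t)$ are independent at each fixed $t$, every component is built from Hermite polynomials in $f$ and $f'/\si$, and stationarity collapses each to a single integral $\var[I_q(T)]=T\int_{-T}^T(1-|t|/T)\,g_q(t)\,dt$, with $g_q$ an explicit polynomial of degree $q$ in $r(t)$, $r'(t)/\si$, $r''(t)/\si^2$.

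The first step is to identify the second chaos with the main term. Computing the relevant Hermite coefficients shows that the $H_2(f)$ and $H_2(f'/\si)$ pieces enter with opposite weights $\alpha=-\beta$, so $g_2$ is a multiple of $\alpha^2 r^2+2\alpha\beta\,(r'/\si)^2+\beta^2(r''/\si^2)^2$. This is not yet a square, but the identity $\int_0^T(1-t/T)(r'/\si)^2\,dt=-\int_0^T(1-t/T)\,r\,r''/\si^2\,dt+O(1/T)$, which uses only $r'(0)=0$ and the boundary behaviour of the Fej\'er weight, converts the cross term and assembles the three pieces into $\alpha^2 h^2$. Hence $\var[I_2(T)]=\frac{\si^2}{\pi^2}T\int_0^T(1-t/T)h^2\,dt+O(1)$, matching Theorem~\ref{thm: LB} and the quadratic-form description of Section~\ref{sec: quad}; since $\int_0^T(1-t/T)h^2\,dt$ is bounded below for a non-degenerate process, the $O(1)$ is harmless and $\var[I_2(T)]\asymp T\int_0^T(1-t/T)h^2\,dt$. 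It then suffices to prove $\sum_{q\ge 4}\var[I_q(T)]\lesssim \var[I_2(T)]+O(T)$.

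Here condition \eqref{eq: cond} enters twice. Splitting the integral at a threshold $t_0$ beyond which $\p(t)\le\theta<1$, the near-diagonal range $|t|<t_0$ contributes only $O(T)$: the Geman condition \eqref{eq: Geman} makes the excess two-point intensity absolutely integrable there, and the factor $T-|t|\le T$ costs a single power of $T$, which is absorbed since the target is already $\gtrsim T$. On the far range $|t|\ge t_0$ the bound $\p(t)\le\theta$ forces $r(t)^2+(r'(t)/\si)^2\le\theta^2$, whence the Gaussian vector $(f(0),f'(0),f(t),f'(t))$ is uniformly non-degenerate: conditioning on $f(0)=f(t)=0$, the conditional variance of $f'$ stays $\ge\si^2(1-\theta^2)$ and the conditional correlation stays $\le\theta$ in modulus. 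This uniform non-degeneracy makes the chaos series converge geometrically, yielding summable tails $\sum_{q\ge 4}|g_q(t)|$ and reducing the remainder to one convergent integral of the full excess intensity minus its quadratic part.

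The crux, and the step I expect to be hardest, is that this far-range integral must still be dominated by $T\int_0^T(1-t/T)h^2\,dt$ even though the higher-order integrand is \emph{not} pointwise bounded by $h^2$: on the locus $h(t)=0$ the excess intensity is generically nonzero and can even be positive, so only the cancellation highlighted in Section~\ref{sec: Cancellation} can save the estimate. My plan to capture it is to use the derivative relation $r''/\si^2=h-r$ to rewrite the excess intensity $E(t)$ as $E_0(r,r'/\si)$ plus terms carrying an explicit factor of $h$, and then to convert both the residual $E_0$ and the $(r'/\si)^2$-type contributions into $h^2$ up to boundary terms of order $O(T)$ by an integration-by-parts argument (exactly as for the second chaos, but now for the full nonlinear intensity), ultimately leaning on the spectral identity $h=\cF[\mu]$ with $d\mu=(1-\lm^2/\si^2)\,d\rho$ (Remark~4 after Theorem~\ref{thm: LB}) and the vanishing of $\mu$ at the special frequency $\pm\si$. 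A softer but strictly weaker route, valid only under $\limsup_{|t|\to\infty}\psi(t)<\tfrac12$, avoids this by Taylor-expanding $E$ and bounding all higher terms crudely, as indicated in the remark after Theorem~\ref{thm: UB} and in Proposition~\ref{prop: Crude Arcones-type bound}; upgrading that argument to the sharp threshold $\limsup_{|t|\to\infty}\p(t)<1$ is precisely where the cancellation must be carried out in full.
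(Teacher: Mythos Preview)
This statement is a \emph{conjecture} in the paper, not a theorem: the paper does not prove it, and indeed explicitly flags it as open (see the remarks following the conjectures and Section~\ref{section: conjectures}). So there is no ``paper's own proof'' to compare against.

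More importantly, your proposal does not prove the Strong Conjecture either. You write ``Here condition \eqref{eq: cond} enters twice'' and then split at a threshold $t_0$ beyond which $\p(t)\le\theta<1$; but condition~\eqref{eq: cond} is precisely the hypothesis of Theorem~\ref{thm: UB}, and the entire content of the Strong Conjecture is that this hypothesis can be \emph{dropped}. What you have outlined is, at best, an alternative route to Theorem~\ref{thm: UB}, not a proof of the conjecture. Your closing sentence (``upgrading that argument to the sharp threshold $\limsup_{|t|\to\infty}\p(t)<1$'') confirms this: you are aiming at the threshold in~\eqref{eq: cond}, whereas the Strong Conjecture imposes no threshold at all --- it must cover, for instance, processes with $\limsup_{|t|\to\infty}\p(t)=1$ (which occurs, e.g., for the singular Bernoulli-convolution examples of Section~\ref{subsec: sing cnts exa} with $r_\alpha(t)\not\to0$).

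Even as a proof of Theorem~\ref{thm: UB}, the argument has a gap. The paper's mechanism for the upper bound is the algebraic divisibility $(x+z)^2\mid P_q$ (Proposition~\ref{prop: divisibility}) together with the Arcones-type coefficient bound (Proposition~\ref{prop: Crude Arcones-type bound}); these give a \emph{pointwise} estimate $|P_q(r,r'/\si,r''/\si^2)|\le C_q\,h^2$ with summable $C_q$ on the far range. Your proposal replaces this by an integration-by-parts scheme intended to convert $E_0(r,r'/\si)$ and the $(r'/\si)^2$ terms into $h^2$ ``up to boundary terms of order $O(T)$'', but you do not indicate how to carry this out for the full nonlinear excess intensity (as opposed to the single quadratic term $g_2$), nor why the resulting boundary/error terms remain $O(T)$ uniformly in the chaos index~$q$ before summation. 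The paper's divisibility approach sidesteps this entirely, and it is not clear your scheme can be made to work without it.
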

\noindent Even the weak form of the conjecture would allow us to prove stronger results, e.g., improve Theorem~\ref{thm: extremal}~\ref{part: lin} to completely characterise linear variance and prove that Corollary~\ref{cor: special atom} holds for any $\theta \in [0,1)$. We provide further evidence for the conjectures in Section~\ref{section: conjectures}.

\subsection{Background and motivation}

The origins for the Kac-Rice method for computing the expected number of zeroes lie in the independent work of Kac~\cites{Kac43, Kac48} and of Rice~\cites{Rice44, Rice45}. Applying this method to SGPs yields the formula~\eqref{eq: KR}, even when both sides are infinite, as was done by Ylvisaker~\cite{Ylv} and Ito \cite{Ito}.  %Cram\'{e}r and Leadbetter \cite{CL}*{Equation 10.6.2 or 10.7.5} gave\footnote{Strictly speaking they exclude the case of purely atomic spectral measures, but their formula may be extended to this case if one takes care of an isolated set of singularities, provided the support of the spectral measure contains at least four points.} an exact but involved formula for the variance of $N(T)$, and further showed that it is finite under the hypothesis \eqref{eq: Geman}. 
Sufficiency of the Geman condition~\eqref{eq: Geman} for finite variance was proved by
Cram\'{e}r and Leadbetter \cite{CL}*{Equation 10.6.2 or 10.7.5}, while necessity was established by Geman~\cite{Gem}.
%show that it is also necessary for finite variance\footnote{Geman uses the Cram\'{e}r-Leadbetter formula and accordingly excludes purely atomic spectral measures, but in light of the previous comment this is not necessary.}, and we will assume it throughout this paper. 
Qualls~\cite{QuPhD}*{Lemma 1.3.4} showed that 
the Geman condition is equivalent to the spectral condition
$
  \int_\R \log(1+|\lm|)\lm^2 d\rho(\lm)<\infty
$ (see also \cite{Boas}*{Theorem 3}).

An exact but somewhat inaccessible formula for the variance was rigorously derived\footnote{This formula was based on the ideas of Kac-Rice, and indeed such a formula was known to physicists \cite{SSZW} and had been proved mathematically assuming the existence of $r^{(vi)}(0)$ (see the footnote on \cite{VR}*{Page 188}).} by Cram\'{e}r and Leadbetter \cite{CL}*{Sections 10.6-7}. Little progress in understanding the asymptotic growth of the variance was made until Slud~\cites{Slud91,Slud94} introduced Multiple Wiener Integral techniques some decades later --- these were in turn refined and extended by Kratz and L\'eon~\cites{KL06, KL10}, using Wiener chaos expansions.

% \textcolor{red}{Cram\'{e}r and Leadbetter \cite{CL}*{Sections 10.6-7} have rigorously derived a formula for the variance, but it is hard to infer from the resulting formula the asymptotic growth, is much more difficult to analyse than the simple quantity $r + r'' / {\si}^2$ appearing in our work.}

The formulas mentioned above were used to prove various properties of the zeroes, such as sufficient conditions for linearity of the variance and for a central limit theorem (see, e.g., \cites{Cuz,Mal}). However, extracting the asymptotic growth of the variance under reasonably general conditions has proved fruitless. For example, the only attempt at a systematic study of super-linear growth of the variance that we are aware of is a special family of examples due to Slud \cite{Slud94}*{Theorem 3.2}. In Section~\ref{sec: inter exa} we use Theorem~\ref{thm : asymp constant} to improve Slud's result.
More generally, one can now analyse a large number of examples, as we do in Section~\ref{sec: examples}, due to the simplicity of the quantity $r + r'' / {\si}^2$ which appears throughout our results.

The case of linear variance was historically of interest. Previously, the only condition for asymptotically linear variance (that we are aware of)
was $r,r''\in \mathcal{L}^2(\R)$, which follows from combining the results of Cuzick~\cite{Cuz} and Slud~\cite{Slud91}. We show in
Section~\ref{sec: Cancellation} that the condition $r+ \frac{r''}{\si^2} \in \mathcal{L}^2(\R)$ is strictly weaker, therefore Theorem~\ref{thm: extremal}~\ref{part: lin} improves upon their result. It also follows from their work that $r, r'' \in \mathcal{L}^2(\R)$ implies that $\frac{1}T\var[N(T)]$ converges as $T\to\infty$. Ancona and Letendre \cite{AL}*{Proposition 1.11} give an exact expression for this limit (see also \cite{Dal}*{Proposition 3.1}), although their main focus is on the growth of the central moments of linear statistics (which generalise the zero count). The lower bound \eqref{eq: linlowerbound} also appears in the work of Lachi{\`e}ze-Rey \cite{LR}, who studies rigidity and predictability of the zero set. 

We finally mention that our work has parallels in different but related models. In the setting of complex zeroes of a random Gaussian analytic $f:\C\to\C$ an asymptotic formula for the variance, an $L^2$-condition that guarantees linearity, and a characterisation of maximal (i.e., quadratic) growth were given in \cite{Nvar}. Analogous results were then proved for the winding number of a Gaussian stationary $f:\R\to\C$ in \cite{BF}.

\subsection{Outline of our methods}\label{subsec: outline}
Let us briefly outline our method. We write
\begin{equation*}
  N(T)=\sum_{q=0}^\infty \pi_{q}(N(T))
\end{equation*}
where $\pi_q$ denotes the projection onto the $q$'th Wiener chaos. Explicit expressions for this decomposition are well known, it turns out that only the even chaoses contribute, and so we have
\begin{equation*}
  \var[N(T)]=\sum_{q=1}^\infty \E[\pi_{2q}(N(T))^2].
\end{equation*}
The diagram formula allows us to compute (see Lemma~\ref{lem: ENq^2})
\begin{equation*}
  \E[\pi_{2q}(N(T))^2]=\int_{-T}^T (T-|t|)\widetilde{P}_q(t) dt
\end{equation*}
where $\widetilde{P}_q$ is a polynomial expression that involves $r,r'$ and $r''$. Our lower bound comes from explicitly evaluating the term with $q=1$. For the upper bound we establish that $\left(r+\frac{r''}{\si^2}\right)^2$ divides the polynomial\footnote{Strictly speaking we first add a small computable quantity, which leads to the difference between $P_q$ and $\widetilde{P}_q$ in Section~\ref{sec: chaos expan}.} $\widetilde{P}_q$ exactly, see Proposition~\ref{prop: divisibility}. This yields
\begin{equation*}
  \E[\pi_{2q}(N(T))^2]\le C_q \int_{-T}^T (T-|t|) \left(r(t)+\frac{r''(t)}{\si^2}\right)^2 dt
\end{equation*}
for some $C_q$. The remainder of our proof of the upper bound involves showing that this sequence $C_q$ is summable under the given hypothesis.

\subsection*{Acknowledgements}
Mikhail Sodin first suggested the project to us.
%pointed out to us that no non-trivial lower bound for the variance was known in full generality. 
We had a number of interesting and fruitful discussions with Yan Fyodorov, Marie Kratz and Igor Wigman on various topics related to this article. Eugene Shargorodsky contributed the main idea in the proof of Claim~\ref{clm: eugene}. We are thankful to Ohad Feldheim for suggesting improvements to the presentation of this paper. The research of E.A. is supported by a Simons Collaboration Grant (550029, to John Voight), his research was partly conducted while hosted in King's College London and supported by EPSRC grant EP/L025302/1. The research of J.B. is supported in part by EPSRC New Investigator Award EP/V002449/1.
The research of N.F. is partially supported by Israel Science Foundation grant 1327/19.

\section{Examples}\label{sec: examples}
In this section we give a number of examples that expand on or illustrate our results.

\subsection{Purely atomic measure}
If $\rho$ consists of a single symmetrised atom $\rho = \delta_{\\al}^*$, then
$f(t)$
%$= \sqrt X \cos(\sigma t + \Phi)$ 
is a degenerate process and thus $\var [N(T)]$ is bounded. However, a superposition of such processes results in a random almost periodic function, with non-trivial behavior.
Specifically let $\alpha_i \in\R $ and $w_i>0$ satisfy $\sum_{i} w_i=1$ and $\sum_{i} w_i \alpha_i^2 \log(1+|\alpha_i|)<+\infty$ \footnote{There might only be finitely many $\alpha_i$ in which case this second condition is redundant.}. We consider the measure
\begin{equation*}
  \rho =  \sum_{i} w_i \delta_{\alpha_i}^*
\end{equation*}
Then the covariance function is
$r(t) = \sum_{i} w_i \cos(\alpha_i t) $ which yields $r''(t) = - \sum_{i} w_i \alpha_i^2 \cos(\alpha_i t) $ and $\sigma^2 = -r''(0) =  \sum_{i} w_i \alpha_i^2$. By Theorem~\ref{thm: extremal}~\ref{part: quad} we have quadratic growth $\var [N(T)] \asymp T^2$. Using Proposition ~\ref{prop: LB}, we may give a concrete lower bound, that is,
\[
\liminf_{T\to\infty}\frac{\var [N(T)]}{T^2} \ge \frac{\si^2}{4 \pi^2} \sum_{i=1}^n w_i^2 \left(1-\frac{\al_i^2}{\si^2}\right)^2;
\]
we omit the details.

\subsection{Cuzick--Slud covariance functions}\label{sec: exa square int}
We have already discussed in detail the case $r, r'' \in \mathcal{L}^2(\R)$; in this case the limit
\begin{equation*}
  \lim_{T\to\infty} \frac{\var[N(T)]}T
\end{equation*}
exists and we further have a CLT for $N(T)$. We mention that a number of classic kernels satisfy this hypothesis, for example, the Paley-Wiener kernel $\sinc t = \frac{\sin t}{t}$ or the Gaussian kernel (sometimes referred to as the Bargman-Fock kernel) $e^{-t^2}$.

\subsection{Exponential kernel and approximations}
Consider the Ornstein-Uhlenbeck (OU) process, defined by the covariance function $r(t) = e^{-|t|}$. This process has attracted considerable attention since it arises as a time-space change of Brownian motion. Since the covariance is not differentiable at the origin, none of our results may be directly applied. However, one may approximate the OU process by differentiable processes.

One way to do so is by taking
$ r_a(t) = e^{a-\sqrt{a^2+t^2}} $, with $a \downarrow 0$. In this case $\sigma_a^2 = \frac{1}{a}$ and
\begin{equation*}
  r_a(t) + \frac{r_a''(t)}{\sigma_a^2}=\left(1-\frac{a^3}{(a^2+t^2)^{3/2}}+\frac{at^2}{a^2+t^2}\right)r_a(t)\ge e^{-t}
\end{equation*}
where the inequality holds for $t\ge a^{2/3}$. We deduce from Proposition~\ref{prop: LB} that for $T\ge a^{2/3}$ we have
\begin{equation*}
  \var[N(T)]\ge\frac T{\pi^2 a}\int_{a^{2/3}}^T \left(1-\frac{t}{T}\right) e^{-2t} \ dt.
\end{equation*}
As $a \downarrow 0$, we see that the variance is unbounded, and this holds even on certain short intervals that
are not `too short', i.e, such that $T \gg \sqrt a$.

Another approximation may be derived using the spectral measure. The OU process has spectral density
$\frac{1}{\pi (1+ \lambda^2)}$. Thus one may consider the spectral density
$\frac{M}{\pi(M-1)} \left(\frac{1}{\lambda^2+1} - \frac{1}{\lambda^2+M^2} \right)$, with $M~\rightarrow~\infty$.
The corresponding covariance kernel is $r_M(t) = \frac{Me^{-|t|} - e^{-M|t|}}{M-1}$, with $\sigma_M^2 = M$.
In this case we have
$$
r_M(t) + \frac{r_M''(t)}{\sigma_M^2} = \frac{M+1}{M-1} \left( e^{-t}-e^{-Mt} \right)\ge e^{-t}-e^{-Mt}
$$
for $t>0$, so that applying Proposition~\ref{prop: LB} we obtain
\begin{equation*}
\var [N(T)] \ge \frac{MT}{\pi^2} \int_{0}^{T} \left( 1 - \frac{t}{T} \right) \left( e^{-2t}-2e^{-(M+1)t}+e^{-2Mt} \right) \ dt.
\end{equation*}
Once more we see that the variance is unbounded, even on short intervals that satisfy $T\gg\frac1{\sqrt M}$.

\subsection{Bessel function}
Let $J_{\alpha}(t)$ be the $\alpha$'th Bessel function of the first kind.
If $r(t) = J_0(t)$ then $J_0'(t) = -J_1(t)$ and
$ J_0''(t)=\frac{J_2(t) - J_0(t)}{2}$, so that $\sigma^2 = \frac{1}{2}$ and
$$
J_0(t) + \frac{J_0''(t)}{\sigma^2} = J_2(t) \notin \mathcal{L}^2(\R).
$$
Moreover, in this example $r(t) \rightarrow 0$, and so Theorem~\ref{thm : asymp constant} applies and we have
\begin{equation*}
\var [N(T)]  \sim \frac{\sigma^2}{\pi^2} T \int_{0}^{T} \left(1 - \frac{t}{T} \right) J_2(t)^2 \ dt \sim \frac{1}{2\pi^3} T \ln T.
\end{equation*}

\subsection{Intermediate growth}\label{sec: inter exa}
Using Theorem~\ref{thm : asymp constant} it is possible to construct more examples where $ T \ll \var [N(T)] \ll T^2 $. For instance, consider
\begin{equation*}
  r(t) = \frac{1}{(1+t^2)^b},
\end{equation*}
with $ 0 < b < \frac{1}{4} $. One way to see that this is a legitimate covariance is to compute the spectral measure
$d\rho_b(\lambda)=\frac{1}{\Gamma(b)\sqrt\pi} 2^{\frac12-b} |\lambda|^{b-\frac{1}{2}} K_{\frac{1}{2}-b} (|\lambda|)$, where $K_{\alpha}$ is the modified Bessel function of the second kind of order $\alpha$. Alternatively\footnote{This argument appears in \cite{Slud94}} note that
\begin{equation*}
  r(t) = \E[e^{-i(X-\tilde{X})}]
\end{equation*}
where $X$ and $\tilde{X}$ are independent $\Gamma(b,1)$ random variables. It is easy to check that the hypotheses of Theorem~\ref{thm : asymp constant} are satisfied and so
\begin{equation*}
  \var [N(T)]\sim\frac{2b}{\pi^2 (1-4b)(2-4b)} T^{2-4b}.
\end{equation*}

More generally let $L$ be a function that `varies slowly at infinity', that is, for every $x>0$ we have
\begin{equation*}
  \frac{L(tx)}{L(t)}\to1
\end{equation*}
as $t\to\infty$ and suppose that we have
\begin{equation*}
  r(t) = \frac{L(|t|)}{(1+t^2)^{b}}.
\end{equation*}
Suppose further that there exist $C>0$ and $\delta<b$ such that
\begin{equation}\label{eq: r',r'' growth Slud}
  \frac{|r'(t)|}{|r(t)|},\frac{|r''(t)|}{|r(t)|}\le C (1+t^2)^\delta.
\end{equation}
Then Slud \cite{Slud94}*{Theorem 3.2} showed that
\begin{equation*}
  \var [N(T)]\sim\frac{\sigma^2}{\pi^2 (1-4b)(2-4b)}L(T)^2 T^{2-4b}
\end{equation*}
and moreover that $N(T)$ satisfies a non-CLT.

Theorem~\ref{thm : asymp constant} allows us to prove Slud's result, without imposing the hypothesis \eqref{eq: r',r'' growth Slud}. Indeed, Slud uses this hypothesis to show that the higher order chaoses are negligible, that is, that\footnote{$N_1(T)$ will be defined in the next section.}
\begin{equation*}
  \var[N(T)]\sim \frac{\si^2}{\pi^2} \frac{\E[N_1(T)^2]}4.
\end{equation*}
But this is precisely the conclusion of Theorem~\ref{thm : asymp constant}, which applies here since $r(t)\to0$ as $t\to\infty$. To compute the asymptotic growth of the variance we write, as in \eqref{eq: parseval for key int},
\begin{equation*}
  \frac1{L(T)^2 T^{2-4b}}\int_{-T}^T (T-|t|) \left(r(t)+\frac{r''(t)}{\si^2}\right)^2 dt =\frac{2\pi T}{L(T)^2 T^{2-4b}}  \int (\mathcal{S}_T * \mu)\ d\mu
\end{equation*}
where $d\mu(\lm)=\left(1-\frac{\lm^2}{\si^2}\right)d\rho(\lm)$. We define $d\rho_T(\lm)=\frac {T^{2b}}{L(T)}d\rho\left(\frac\lm T\right)$ and $d\mu_T(\lm)=\left(1-\frac{\lm^2}{T^2\si^2}\right)d\rho_T(\lm)$. Changing variables in the previous equation we see that
\begin{equation*}
  \frac1{L(T)^2 T^{2-4b}}\int_{-T}^T (T-|t|) \left(r(t)+\frac{r''(t)}{\si^2}\right)^2 dt =2\pi \int (\mathcal{S}_1 * \mu_T)\ d\mu_T.
\end{equation*}
Now \cite{DM}*{Proposition 1} implies that $\rho_T$ converges locally weakly to the measure $\rho_0$ with density $\frac{|\lm|^{2b-1}}{2\Gamma(2b)\cos(b\pi)}$. Since $1-\frac{\lm^2}{T^2\si^2}\to1$ uniformly on bounded sets we have $\mu_T\to\rho_0$ locally weakly also. Further, by \cite{DM}*{Equation 1.10}, we have $\hat{\rho}_0(t)=|t|^{-2b}$. This yields
\begin{align*}
  \frac1{L(T)^2 T^{2-4b}}\int_{-T}^T (T-|t|)\left(r(t)+\frac{r''(t)}{\si^2}\right)^2 &\sim 2\pi \int (\mathcal{S}_1 * \rho_0)\ d\rho_0\\
  &=\int_{-1}^1 (1-|t|)|\hat{\rho}_0(t)|^2 dt\\
  &=2\int_{0}^1 \frac{1-t}{t^{4b}}dt=\frac2{(1-4b)(2-4b)}.
\end{align*}
\subsection{Singular continuous spectral measure}\label{subsec: sing cnts exa}

If the spectral measure is absolutely continuous with respect to Lebesgue measure, then \eqref{eq: cond} holds by the Riemann-Lebesgue lemma and we may apply Theorem~\ref{thm: UB}. If there are atoms in the spectral measure then Theorem~\ref{thm: extremal}~\ref{part: quad} applies. Singular continuous measures fall between these two stools. Here we give a family of examples that show that there is no simple characterisation for this class of processes.

Fix $\alpha_1 \ge \alpha_2 \ge \ldots >0$ such that $\sum_{n=1}^{\infty} \alpha_n^2 < \infty$, and let
$r_{\alpha}(t) = \prod_{n=1}^{\infty} \cos(\alpha_n t)$. In this case the spectral measure $\rho_{\alpha}$ is the infinite convolution of the atomic measures $\delta_{\alpha_n}^*$ and is usually referred to as a symmetric Bernoulli convolution. By the Jessen–Wintner theorem \cite{JW}*{Theorem 11} it is of pure type, being either absolutely continuous or singular continuous. (In particular it contains \emph{no atoms}.) Moreover, the support of $\rho_\alpha$ (the `spectrum') is a perfect set and is either compact or all of $\R$, according as $\sum_{n=1}^{\infty} \alpha_n$ converges or diverges.

In the former case we write $R_n=\sum_{i=n+1}^{\infty} \alpha_i$ and the spectrum is a subset of $[-R_0,R_0]$. There are two special cases that are particularly tractable:
\begin{enumerate}[label=\arabic*.]
  \item $\alpha_n>R_n$ for every $n\ge 1$, and
  \item $\alpha_n\le R_n$ for every $n\ge 1$.
\end{enumerate}
In the first case Kershner and Wintner \cite{KW}*{Pages 543--544} showed that the support of $\rho_\alpha$ is nowhere dense (and so is a Cantor-type set) and has total length $\ell=2\lim_{n\to\infty}2^nR_n$. The measure $\rho_\alpha$ is singular if and only if $\ell=0$; if $\ell>0$ then
\begin{equation*}
  \rho_\alpha(I)=\frac1\ell \Leb(I\cap S_\alpha)
\end{equation*}
where $\Leb$ is the usual Lebesgue measure on the real line and $S_\alpha$ is the spectrum.

In Case 2 the spectrum is all of $[-R_0,R_0]$ \cite{KW}*{Page 547}, and, as the examples below illustrate, the measure $\rho_\alpha$ may be absolutely continuous or singular.

We now list some examples. One particularly simple choice is $\alpha_n = a^n$ for some $a \in (0,1)$ and we write $\rho_a$ and $r_a$ for the corresponding spectral measure and covariance function.
\begin{itemize}
  \item If $0<a<\frac12$ we are in Case 1 and $\rho_a$ is singular. For rational $a$, by \cite{Ker}, $r_a(t)\to 0$ as $|t|\to \infty$ if and only if $\frac1a$ is not an integer. In this case we have $r_{a}(t) = O(|\log t|^{-\gamma})$ for large $|t|$ where $\gamma=\gamma(a)>0$.

      In the particular case $a=\frac13$ we get the usual Cantor middle-third set (shifted to be contained in $[-\frac12,\frac12]$) and the distribution function $\rho_{1/3}((-\infty,x])$ is the usual devil's staircase function (again, shifted).

  \item If $a\ge \frac12$ then we are in Case 2. If $a=\frac12$ then $r_{1/2}(t)=\sinc t$ (this formula was discovered by Euler), which was covered in Section~\ref{sec: exa square int}.

  \item More generally if $a=(\frac1{2})^{1/k}$ where $k\in \N$ then $\rho_a$ is absolutely continuous and $r_a(t)=O(|t|^{-k})$ (see \cite{Win}*{Page 836}). Once more we are covered by Section~\ref{sec: exa square int}.

  \item Erd\H{o}s \cite{Er}*{Section 2} showed that if $a=\frac1b$ where $b\neq 2$ is a `Pisot-Vijayaraghavan number' (a real algebraic integer whose conjugates lie in the unit disc) then  $\limsup_{t \rightarrow \infty} r_{a}(t) > 0$ and therefore $\rho_a$ is singular. Concrete values of $a>\frac12$ are $a=\frac{\sqrt5 -1}2$ (the Fibonacci number) and the positive root of the cubic $a^3+a^2-1$.

  \item Conversely, Salem \cite{Sa}*{Theorem II} showed that if $r_{a}(t)\not\to0$ then $\frac1a$ is a Pisot-Vijayaraghavan number. This, of course, does not rule out the possibility that there are other values of $a>\frac12$ for which $\rho_a$ is singular but $r_{a}(t)\to0$, but to the best of our knowledge there has been no progress on this question since.
\end{itemize}
More involved examples give more sophisticated behaviour, unless otherwise indicated the examples are taken from \cite{JW}*{Examples 1--8}.
\begin{itemize}
  \item If $\alpha_{2n-1}=\alpha_{2n}=\frac1{3^n}$ then $\rho_\alpha=\rho_{1/3} * \rho_{1/3}$ which is supported on the whole interval $[-1,1]$ and $r_\alpha(t)=r_{1/3}(t)^2$. We therefore have $\limsup_{t \rightarrow \infty} r_{\alpha}(t) > 0$ and so the spectral measure is singular.

  \item If the sequence $\alpha_n$ consists of the numbers of the form $2^{-m!}$ repeated exactly $2^{m!}$ times for $m=1,2,\dots$ then the spectrum is all of $\R$ and $r_{\alpha}(t)\not\to0$, which means that $\rho_\alpha$ is singular.

  \item If $\alpha_n=\frac{1}{n2^n}-\frac{1}{(n+1)2^{n+1}}$ then $\rho_\alpha$ is singular but $r_{\alpha}(t)\to0$.

  \item If $\alpha_n=\frac1{n!}$ then $\alpha_n>R_n$ for every $n$ and $(-1)^{k+1}r_\alpha(\pi k!)\to 1$ as $k\to\infty$, by\footnote{It is actually stated there that $(-1)^{k}r_\alpha(\pi k!)\to 1$ but this is easily seen to be incorrect.} \cite{LR}*{Proposition 9}.
\end{itemize}

We now present a general result which applies in Case 1. Its proof appears in Section~\ref{sec: cont sing}.
\begin{prop}\label{prop: sing var}
  Suppose that $\alpha_n>R_n$ for every $n\ge 1$, and define $M=M_T$ by $R_M<\frac1T\le R_{M-1}$. Then
  \begin{equation*}
    T\int_{0}^T \left(1-\frac{t}{T}\right) \left(r(t)+\frac{r''(t)}{\si^2}\right)^2 \ dt  \asymp T^2 2^{-M}
  \end{equation*}
  for $T\ge T_0$.
\end{prop}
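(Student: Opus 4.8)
\emph{The plan.} The plan is to reduce, via the Parseval identity \eqref{eq: parseval for key int}, to showing that $\int(\mathcal{S}_T*\mu)\,d\mu\asymp T\,2^{-M}$, where $d\mu(\lambda)=(1-\lambda^2/\si^2)\,d\rho(\lambda)$ --- equivalently, $\int_{-T}^T(T-|t|)(r+r''/\si^2)^2\,dt\asymp T^2\,2^{-M}$ (all implicit constants may depend on $\rho$). I would first record two arithmetic consequences of the Case~1 hypothesis $\alpha_n>R_n$: since $R_{n-1}=\alpha_n+R_n>2R_n$, we get $R_n<2^{-n}R_0$ and (using $R_{M-1}\ge 1/T$) $R_j\ge 2^{M-1-j}/T$ for $0\le j\le M-1$. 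In particular $2^M\le 2R_0T$, so the target $T^2\,2^{-M}$ is $\gtrsim T$: it tends to infinity, hence additive errors of size $O(1)$ below will be harmless. Next I would reduce to the finite measure: writing $\rho=\rho_M*\nu_M$ with $\rho_M=\ast_{n\le M}\delta^*_{\alpha_n}$ (a sum of $2^M$ atoms of mass $2^{-M}$) and $\nu_M$ supported on $[-R_M,R_M]$, one has $\wh{\nu_M}(t)=\prod_{n>M}\cos(\alpha_n t)\in[\tfrac12,1]$ for $|t|\le T$ (because $|\alpha_n t|\le R_MT<1$ when $n>M$). Differentiating $r=r_M\wh{\nu_M}$ twice gives $r+r''/\si^2=\wh{\nu_M}(r_M+r_M''/\si^2)+E$ with $\|E\|_{L^\infty[-T,T]}\lesssim R_M\le 1/T$; combining $(a+b)^2\le 2a^2+2b^2$, $(a+b)^2\ge\tfrac12a^2-b^2$, $\tfrac14\le\wh{\nu_M}^2\le1$ on $[-T,T]$, and $\int_{-T}^T(T-|t|)E^2\le\|E\|_\infty^2T^2\lesssim 1$ shows it suffices to prove $\int_{-T}^T(T-|t|)(r_M+r_M''/\si^2)^2\,dt\asymp T^2\,2^{-M}$.

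\emph{The diagonal term.} Let $x_1,\dots,x_{2^M}$ be the (pairwise distinct, by Case~1) atoms of $\rho_M$ and $c_j=1-x_j^2/\si^2$. Using $\int_{-T}^T(T-|t|)e^{-ist}\,dt=T^2\sinc^2(sT/2)$, Parseval gives
\[
\int_{-T}^T(T-|t|)(r_M+r_M''/\si^2)^2\,dt=T^2\sum_{j,k}2^{-2M}c_jc_k\sinc^2\!\big(\tfrac{(x_j+x_k)T}2\big).
\]
The pairs with $x_j+x_k=0$ (i.e.\ $x_k=-x_j$, $c_k=c_j$) contribute $T^2\,2^{-2M}\sum_jc_j^2$; since $x_j$ is distributed as $\sum_{n\le M}\pm\alpha_n$ with independent symmetric signs, a second-moment computation (using $\si^2-\sum_{n\le M}\alpha_n^2\le R_M^2$) gives $2^{-M}\sum_jc_j^2=2-2\sum_{n\le M}\alpha_n^4/\si^4+O(R_M^2)\in[2\kappa+O(R_M^2),\,2]$, where $\kappa:=1-\sum_n\alpha_n^4/\si^4>0$ (non-degeneracy forces $\si^2>\alpha_1^2$, hence $\sum_n\alpha_n^4\le\alpha_1^2\si^2<\si^4$). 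So this diagonal term is $\asymp T^2\,2^{-M}$.

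\emph{The off-diagonal term.} Write $\mathcal{O}_T$ for the sum over $x_j+x_k\ne0$. For the \emph{lower} bound it suffices that $\mathcal{O}_T\ge-O(1)$: if $c_jc_k<0$ then $x_j,x_k$ straddle $\pm\si$, so $|c_j|+|c_k|=|c_j-c_k|=\si^{-2}|x_j+x_k|\,|x_j-x_k|\le 2R_0\si^{-2}|x_j+x_k|$, whence $|c_jc_k|\sinc^2(\tfrac{(x_j+x_k)T}2)\lesssim(x_j+x_k)^2\sinc^2(\tfrac{(x_j+x_k)T}2)\lesssim T^{-2}$; summing over at most $2^{2M}$ pairs, the negative part of $\mathcal{O}_T$ is $O(1)$, and the discrete integral is $\ge(\text{diagonal})-O(1)\asymp T^2\,2^{-M}$. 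For the \emph{upper} bound, I would bound $|c_jc_k|\le\max_jc_j^2\lesssim1$, reducing matters to $\sum_{j,k}2^{-2M}\sinc^2(\tfrac{(x_j+x_k)T}2)\lesssim 2^{-M}$. This sum equals $T^{-2}\int_{-T}^T(T-|t|)r_M(t)^2\,dt=2\pi T^{-1}\int\mathcal{S}_T\,d(\rho_M*\rho_M)$, and the latter is controlled by the concentration function $\phi_M(L):=\sup_x\rho_M([x-L,x+L])$. The crucial observation is that for each $k\le M$, $\operatorname{supp}(\rho_M)$ is covered by $2^k$ pairwise disjoint intervals of length $2R_k$ (the level-$k$ clumps of the Cantor construction), each of $\rho_M$-mass $2^{-k}$; hence any interval of length $\le 2R_k$ meets at most two of them, giving $\phi_M(L)\le 2^{1-k}$ whenever $R_k\ge L$. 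Taking $k=\max\{j\le M:R_j\ge L\}$ and invoking $R_j\ge 2^{M-1-j}/T$ yields $\phi_M(2^{i+1}/T)\le 2^{i+3-M}$ for $0\le i\le M-2$; plugging this into a dyadic decomposition of $\int\mathcal{S}_T\,d(\rho_M*\rho_M)$ against the tail bound $\mathcal{S}_T(s)\lesssim 1/(Ts^2)$ gives $\int\mathcal{S}_T\,d(\rho_M*\rho_M)\lesssim T\,2^{-M}$, hence $|\mathcal{O}_T|\lesssim T^2\,2^{-M}$ and the discrete integral is $\le(\text{diagonal})+|\mathcal{O}_T|\lesssim T^2\,2^{-M}$.

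\emph{Where the work lies.} None of the individual steps is deep, but the heart of the matter is the upper bound for $\mathcal{O}_T$. The naive approach --- bounding the number of atoms of $\rho_M$ inside a \emph{fixed} interval of length $\sim 1/T$ --- is doomed: the sibling gaps $\alpha_k-R_k=R_{k-1}-2R_k$ are strictly positive but can be far smaller than $1/T$, so atoms really do cluster at scale $1/T$. The escape is to choose the resolution level $k$ \emph{adaptively}, so that the level-$k$ clumps are wider than the test interval (which forbids clustering at that level), and then to carry the two arithmetic facts $R_j<2^{-j}R_0$ and $R_j\ge 2^{M-1-j}/T$ cleanly through the dyadic sum; I expect this bookkeeping, together with the second-moment identity of the diagonal step, to be the only genuinely delicate part of the proof.
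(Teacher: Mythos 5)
Your proof is correct, and although it shares the paper's overall skeleton --- Parseval reduces everything to $\iint\sinc^2\bigl(\tfrac T2(\lm-\lm')\bigr)\,d\mu\,d\mu$, the main term of size $2^{-M}$ comes from the mass near the diagonal at scale $1/T$, and the off-diagonal part is controlled by a dyadic decomposition against the concentration function of the Cantor measure (your bound $\phi_M(L)\le 2^{1-k}$ for $L\le R_k$ is exactly part (i) of Lemma~\ref{lem: cont sing measure bound}, proved by the same clump-counting) --- your implementation of the two delicate steps is genuinely different. The paper works with the infinite convolution $\rho$ throughout: its lower bound restricts the near-diagonal integral to $||\lm|-\si|>c$ so that $\bigl(1-\tfrac{\lm^2}{\si^2}\bigr)^2$ is bounded below, invokes the companion \emph{lower} concentration estimate of part (ii) of Lemma~\ref{lem: cont sing measure bound}, treats the cross term $\tfrac{\lm^2-(\lm')^2}{\si^2}$ as an error of size $\tfrac{CA}{T}2^{-\widetilde{M}}$, and juggles the auxiliary parameters $A$, $\beta$ and $\widetilde{M}$ to close the argument. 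You instead discretise first, replacing $\rho$ by $\rho_M$ via $\wh{\nu_M}\in[\tfrac12,1]$ on $[-T,T]$ (the justification should be $\sum_{n>M}(\al_n t)^2\le(R_MT)^2<1$ rather than the termwise bound $|\al_n t|<1$ you cite, but that is exactly what your hypothesis gives); the ``diagonal'' then becomes the exact set $\{x_j+x_k=0\}$ of $\rho_M\times\rho_M$-mass $2^{-M}$, its contribution is evaluated by a fourth-moment identity whose positivity is guaranteed by $\sum_n\al_n^4<\si^4$, and the sign problem near $\pm\si$ is dispatched by the identity $|c_j|+|c_k|=|c_j-c_k|=\si^{-2}|x_j+x_k|\,|x_j-x_k|$ when $c_jc_k<0$, which makes each negative off-diagonal term $O(T^{-2})$ and their total $O(1)$ --- harmless since the target $T^22^{-M}\ge T/(2R_0)$ diverges. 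What your route buys is the elimination of part (ii) of the lemma, of the restriction $||\lm|-\si|>c$, and of the parameters $A,\beta,\widetilde{M}$, at the modest cost of the $\rho=\rho_M*\nu_M$ reduction and the moment computation; what it still owes is only the routine bookkeeping you flag yourself, namely the dyadic sum over $1\le i\le M-2$ together with the trivial far tail $|\lm-\lm'|>2^{M}/T$, where $\mathcal{S}_T\lesssim T4^{-M}$ and the remaining mass is $O(1)$. I find no gap.
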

\begin{rmk}
  This is of course a lower bound for $\var[N(\rho_\alpha;T)]$ in general, and gives the correct order of growth when $r_\alpha(t)\to 0$.
\end{rmk}
Let us apply this result to some of the examples above.
\begin{itemize}
  \item If $\alpha_n=a^n$ where $0<a<\frac12$ then $T^2 2^{-M}\asymp T^{2-d}$ where $d=\frac{\log 2}{\log \frac1a}$ is the Hausdorff dimension of the spectrum. It would be interesting to understand if there is any relation between the dimension of the spectrum and the behaviour of the variance for a general singular measure. (See also the remarks after Lemma~\ref{lem: cont sing measure bound}.)

  \item If $\alpha_n=\frac1{n!}$ then it is not difficult to show that $\alpha_n>R_n$ and $T^2 2^{-M}\asymp T^{2-\frac{\log2+o(1)}{\log\log T}}$. A less precise lower bound for the variance of this process is claimed in \cite{LR}*{Proposition 11}, although the proof given there is not entirely convincing. Nonetheless, we do use some of the ideas found there in our proof of Proposition~\ref{prop: sing var}.

  \item By choosing a sequence $R_n$ decaying sufficiently fast, it is clear that we can make the term $2^{-M}$ decay arbitrarily slowly. We can therefore construct a process with non-atomic spectral measure whose variance grows faster than $T^{2-\phi(T)}$  where $\phi(T) \to 0$ arbitrarily slowly, that is, arbitrarily close to maximal growth.
\end{itemize}

\subsection{Cancellation in the quantity \texorpdfstring{$r + \frac{r''}{\si^2}$}{}}\label{sec: Cancellation}
As we indicated previously, an important message of this paper is that the behaviour of the variance is governed by the quantity $r + \frac{r''}{\si^2}$. We wish to emphasise the important r\^{o}le of cancellation between the two terms here, and we have already presented some examples of this when the spectral measure has an atom at a `special frequency'. However this cancellation phenomenon is not just about atoms, and as an illustrative example we will produce a\footnote{In fact we produce a family of such covariance functions.} covariance function $r$ such that:
\begin{itemize}
\item The spectral measure $\rho$ has an $\mathcal{L}^1(\R)$ density.
\item $r + \frac{r''}{\si^2} \in \mathcal{L}^2(\R)$ where $\si^2 = \int_{\R} \lambda^2 \ d\rho(\lambda)$.
\item $r, r'' \notin \mathcal{L}^2(\R)$.
\end{itemize}
Writing $d\rho(\lm)=\phi(\lm)d\lm$ and applying the Fourier transform we see that it is equivalent to produce a function $\phi\ge0$ satisfying:
\begin{enumerate}[label=\arabic*.]
\item $\int_{\R}\phi(\lm) d\lm=1$ but $\phi \notin \mathcal{L}^2(\R)$.
\item $\lambda^2 \phi(\lambda) \in \mathcal{L}^1(\R)$, but $\lambda^2 \phi(\lambda) \notin \mathcal{L}^2(\R)$.
\item $\left(1 - \frac{\lambda^2}{\si^2} \right) \phi(\lambda) \in \mathcal{L}^2(\R)$ where $\sigma^2 = \int_{\R} \lambda^2 \phi(\lm) d\lambda$.
\end{enumerate}
We proceed to produce such a function $\phi$.

Let $\alpha \in \left(\frac{1}{2}, 1 \right)$. Choose $M > 1$ such that
\begin{equation} \label{eq : inequality for positivity in example}
M^2 + M + 1 > 3 + 3(1-\alpha) \left( \frac{1}{3-\alpha} - \frac{2}{2-\alpha} \right),
\end{equation}
and let $c_1, c_2 \in \R$ be the solution of the linear system
\begin{equation}
\label{eq : cancellation condition example}
\begin{array}{ccccccc}
\frac{1}{1-\alpha}  & c_1 & + & (M-1) & c_2 & = & \frac{1}{2}, \\
\left(\frac{1}{1-\alpha} - \frac{2}{2-\alpha} + \frac{1}{3-\alpha} \right) & c_1 & + & \frac{M^3 - 1}{3} & c_2  & = & \frac{1}{2}.
\end{array}
\end{equation}
We note that  \eqref{eq : inequality for positivity in example} ensures that the determinant of the matrix associated to \eqref{eq : cancellation condition example} is positive, and since we also have
$ \frac{M^3 - 1}{3} > M - 1$ and $ \frac{2}{2-\alpha} > \frac{1}{3-\alpha}$, it follows that $c_1, c_2 > 0$.
Define
$$
\phi(\lambda) =
\begin{cases}
c_1 (1 - |\lambda|)^{-\alpha}, & \text{ for } |\lambda| < 1, \\
c_2, &  \text{ for } 1 < |\lambda| < M.
\end{cases}
$$
Then:
\begin{itemize}
\item Since $\alpha \in \left(\frac{1}{2}, 1\right)$, it follows that $\phi \in \mathcal{L}^1(\R)$ but $\phi \notin \mathcal{L}^2(\R)$.
\item Integration yields, by the first equation in \eqref{eq : cancellation condition example}, that $\int_{\R}\phi(\lm) d\lm=1$.
\item Similarly $\lambda^2 \phi(\lambda) \in \mathcal{L}^1(\R)$, but $\lambda^2 \phi(\lambda) \notin \mathcal{L}^2(\R)$.
\item Now the second equation in \eqref{eq : cancellation condition example} shows that $\sigma^2 = \int_{\R} \lambda^2 \phi(\lm) d\lambda = 1$.
\item Finally note that $\left(1 - \lambda^2\right) \phi(\lambda) \in \mathcal{L}^2(\R)$.
\end{itemize}

\section{A formula for the variance}\label{sec: chaos expan}

The goal of this section is to give an infinite series expansion for $\var[N(T)]$, each coming from a different component of the Wiener chaos (or Hermite-It\^{o}) expansion of $N(T)$. We begin with some notation. For $q\in \N$ and $l, l_1, l_2, n \in \N_0 =\N\cup\{0\}$ write\footnote{We adopt the standard convention $\frac1{n!}=0$ when $n$ is a negative integer.}
\begin{equation}\label{eq : a coeffs}
a_{q}(l) = \frac{1}{l! (q-l)!} \cdot\frac{1}{2l-1}
\end{equation}
and
\begin{equation}\label{eq : b coeffs}
b_{q}(l_{1},l_{2},n)=\frac{(2q-2l_{1})!(2l_{1})!(2q-2l_{2})!(2l_{2})!}{(2q-2l_{1}-2l_{2}+n)!(2l_{1}-n)!(2l_{2}-n)!n!}.
\end{equation}
Next define the polynomials
\begin{equation}\label{eq : polys no correction}
\widetilde{P}_{q}(x,y,z)= \sum_{l_{1},l_{2}=0}^{q}a_{q}(l_{1}) a_q(l_{2})\sum_{n=\max(0,2(l_{1}+l_{2}-q))}^{\min(2l_{1},2l_{2})}b_{q}(l_{1},l_{2},n)\cdot x^{2(q-l_{1}-l_{2})+n}y^{2(l_{1}+l_{2}-n)}z^{n}
\end{equation}
and
\begin{equation}\label{eq : polys with correction}
P_{q}(x,y,z) =\widetilde{P}_{q}(x,y,z) + c_{q}\left(x^{2q-1}z+(2q-1)x^{2q-2}y^{2}\right)
\end{equation}
where
\begin{equation}\label{eq : c coeffs}
c_{q}=\frac{2^{4q}(q!)^{2}}{2q(2q)!} = \frac{2^{4q}}{2q \binom{2q}{q}}.
\end{equation}
We are now ready to state the expansion.
\begin{prop}\label{prop: main}
We have
\begin{equation*}
\var N(T) = \frac{\sigma^2}{\pi^2} \sum_{q=1}^{\infty} \frac{V_q(T)}{4^q} + \frac{\arccos r(T)}{\pi} \left(1-\frac{\arccos r(T)}{\pi} \right)
\end{equation*}
where
\begin{equation}\label{eq : V_q(T)}
V_q(T) = 2\int_{0}^{T} \left( T - t \right) P_q \left(  r(t), \frac{r'(t)}{\sigma}, \frac{r''(t)}{\sigma^2} \right) dt.
\end{equation}
Furthermore
\begin{equation}\label{eq: Lb for N}
\var N(T) \ge \frac{\sigma^2}{4\pi^2} V_1(T) +  \frac1{\pi^2}\left(1-r(T)^2 \right).
\end{equation}
\end{prop}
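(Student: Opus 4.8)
The plan is to start from the Hermite--It\^o (Wiener chaos) expansion of the zero-counting functional and track the contribution of each chaos to the variance. Following Kratz--L\'eon, one realises $N(T)$ as the limit of $\int_0^T |f'(t)| \delta_\varepsilon(f(t))\,dt$ and expands in Hermite polynomials in the Gaussian vector $(f(t), f'(t)/\sigma)$; the standard computation shows that only even chaoses $2q$ survive (by symmetry of $|f'|$ and $\delta(f)$) and gives an explicit kernel. By orthogonality of chaoses, $\var[N(T)] = \sum_{q\ge 1} \E[\pi_{2q}(N(T))^2]$. For each $q$ one writes $\E[\pi_{2q}(N(T))^2] = \int_0^T\int_0^T K_q(s,t)\,ds\,dt$ where, by stationarity, $K_q(s,t)$ depends only on $s-t$ through $r(s-t), r'(s-t), r''(s-t)$; reducing the double integral to $2\int_0^T (T-t) K_q(t)\,dt$ is the usual change of variables. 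The diagram/Mehler formula then identifies $K_q(t)$, up to normalisation, with a polynomial in $r(t), r'(t)/\sigma, r''(t)/\sigma^2$ — this is exactly $\widetilde P_q$ after extracting the factor $\sigma^2/(4^q \pi^2)$ (the $a_q, b_q$ coefficients come from the Hermite coefficients of $|x|$ and the combinatorics of the diagrams). I would quote Lemma~\ref{lem: ENq^2} (referenced in the outline) for this identity rather than re-derive it.

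The subtlety — and the source of the correction term $c_q(x^{2q-1}z + (2q-1)x^{2q-2}y^2)$ distinguishing $P_q$ from $\widetilde P_q$, as well as the extra summand $\frac{\arccos r(T)}{\pi}(1 - \frac{\arccos r(T)}{\pi})$ — is the $q=0$ and the low-order bookkeeping. The zeroth chaos $\pi_0(N(T)) = \E[N(T)] = \sigma T/\pi$ contributes nothing to the variance, but the regularisation and the precise constant in front require care; more importantly, there is a degenerate-boundary contribution corresponding to the event that $f$ is close to a pure cosine, which produces the $\arccos r(T)$ term (this is visible already in the degenerate computation $\var[N(T)] = \{\sigma T/\pi\}(1-\{\sigma T/\pi\})$ in the introduction, whose Fourier series in $T$ is governed by $\arccos \cos(\sigma T)$). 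I would isolate this term by comparing with the exact (Cram\'er--Leadbetter) formula or by treating the $q=1$ diagram with extra precision: the point is that the ``naive'' polynomial $\widetilde P_q$ over-counts certain fully-contracted diagrams, and adding back $c_q(x^{2q-1}z + (2q-1)x^{2q-2}y^2)$ and simultaneously subtracting the closed-form $\arccos$ contribution restores the identity. The coefficient $c_q = 2^{4q}(q!)^2/(2q(2q)!)$ is precisely the square of the $2q$-th Hermite coefficient of $|x|$ (namely $(2q-2)!/(2^{2q-1}\sqrt{2\pi}\,(q-1)!\,q!)$-type expressions) times the relevant normalisation, so matching it is a finite residue computation.

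For the final inequality \eqref{eq: Lb for N} I would argue as follows. Every term $V_q(T)/4^q$ with $q\ge 2$ is nonnegative: indeed $P_q$ (and $\widetilde P_q$) has nonnegative coefficients — the $a_q(l)$ can be negative for $l=0$ because of the $\frac{1}{2l-1} = -1$ factor, so one must check that the surviving combinations are nonnegative, which is the content of the observation that $\widetilde P_q$ is (up to the correction) a sum of squares / a positive-definite diagram sum; alternatively each $\E[\pi_{2q}(N(T))^2] \ge 0$ trivially as a second moment, and this is the cleanest route. Hence dropping all terms $q\ge 2$ gives $\var[N(T)] \ge \frac{\sigma^2}{4\pi^2}V_1(T) + \frac{\arccos r(T)}{\pi}(1 - \frac{\arccos r(T)}{\pi})$. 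It remains to bound the last summand below by $\frac{1}{\pi^2}(1 - r(T)^2)$. Writing $u = \arccos r(T) \in [0,\pi]$, so $r(T) = \cos u$, we need $\frac{u}{\pi}(1 - \frac{u}{\pi}) \ge \frac{1}{\pi^2}\sin^2 u = \frac{1}{\pi^2}(1-\cos^2 u)$, i.e. $u(\pi - u) \ge \sin^2 u$ for $u \in [0,\pi]$; this is an elementary one-variable inequality (both sides vanish at $0$ and $\pi$, are symmetric about $\pi/2$, and $u(\pi-u) \ge \sin u \cdot \pi/2 \cdot (\text{something}) \ge \sin^2 u$ can be checked by comparing $\sin u \le u$ on $[0,\pi/2]$ together with concavity). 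I would record this as a one-line lemma.

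The main obstacle, in my view, is not the chaos expansion itself (which is by now standard machinery) but pinning down the exact correction term: justifying that the combinatorial polynomial one reads off the diagram formula must be augmented by precisely $c_q(x^{2q-1}z + (2q-1)x^{2q-2}y^2)$ and that this is compensated by precisely the $\arccos r(T)$ boundary term. This requires handling the non-$L^2$ / degenerate directions in the Gaussian space carefully (the vector $(f(t), f'(t))$ becomes degenerate as $t\to 0$, where $r\to 1$, $r'\to 0$), and matching constants against either the Kac--Rice second-moment formula or the explicit degenerate case. I expect the bulk of the real work to live there.
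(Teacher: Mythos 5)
You have the right framework (the Kratz--L\'eon chaos expansion, Lemma~\ref{lem: ENq^2} via the diagram formula, and the reduction of the double integral to a single one), but the crux of the proposition is exactly the step you defer as ``the bulk of the real work'', and your guesses about it point in the wrong direction. There is no over-counting of diagrams and no comparison with the Cram\'er--Leadbetter formula needed: the diagram formula gives $\E[N_q(T)^2]=2\int_0^T(T-t)\widetilde P_q\,dt$ exactly, and the correction term is \emph{added by hand} so that $(x+z)^2\mid P_q$ (which is what the later upper bounds require). The whole mechanism is the elementary identity
\[
r^{2q-1}r''+(2q-1)r^{2q-2}(r')^2=\frac{d^2}{dt^2}\Bigl[\frac{r^{2q}}{2q}\Bigr],
\]
so that $\int_0^T(T-t)\bigl(r^{2q-1}r''+(2q-1)r^{2q-2}(r')^2\bigr)\,dt=\frac{1}{2q}\bigl(r(T)^{2q}-1\bigr)$ after integrating by parts twice (using $r'(0)=0$), whence $\E[N_q(T)^2]=V_q(T)+\frac{c_q}{q\sigma^2}\bigl(1-r(T)^{2q}\bigr)$. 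Summing over $q$ and invoking the Taylor series $\arcsin^2(x)=\frac12\sum_{q\ge1}\frac{2^{2q}}{q^2\binom{2q}{q}}x^{2q}$ produces the $\arccos$ term in closed form. None of this is a ``residue computation'' or a degenerate-boundary effect tied to the process being close to a cosine; without identifying this exact-derivative structure your argument cannot produce the stated formula.

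Your derivation of \eqref{eq: Lb for N} is also flawed. Keeping the full $\arccos$ term while discarding $V_q(T)/4^q$ for $q\ge2$ requires $V_q(T)\ge0$ for $q\ge2$, which you have not established: your claim that $P_q$ has nonnegative coefficients is false (already $a_q(0)=-1/q!<0$), and the positivity of $\E[\pi_{2q}(N(T))^2]$ gives nonnegativity of $V_q(T)+\frac{c_q}{q\sigma^2}(1-r(T)^{2q})$, not of $V_q(T)$ alone. The correct (and simpler) route is the one you call ``cleanest'' but then misapply: dropping $\E[N_q(T)^2]$ for $q\ge2$ entirely yields $\var[N(T)]\ge\frac{\sigma^2}{4\pi^2}\E[N_1(T)^2]=\frac{\sigma^2}{4\pi^2}V_1(T)+\frac{1}{\pi^2}\bigl(1-r(T)^2\bigr)$ directly, since the $q=1$ boundary term is $\frac{c_1}{\sigma^2}(1-r(T)^2)$ with $c_1=4$; the $\arccos$ expression never enters, and your inequality $u(\pi-u)\ge\sin^2 u$, while true, is not needed.
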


The starting point in our calculations is the following Hermite expansion for $N(T)$ given by Kratz and L\'eon \cite{KL10}*{Proposition 1} assuming only the Geman condition (though they and other authors had considered it previously under more restrictive assumptions). We have (the sum converges in $L^2(\Pro)$)
\begin{equation*}
N(T) =\frac{\si}{\pi} \sum_{q=0}^\infty \frac{(-1)^{q+1}}{2^q} N_q(T)
\end{equation*}
where\footnote{Under the Geman condition, one cannot assume that $f$ is continuously differentiable, and `conversely' a continuously differentiable process need not satisfy the Geman condition, see \cite{Gem}*{Section 4}. However the existence of $r''$ implies the existence of the derivative in quadratic mean of the process, and this is how the object $f'$ should be understood if the process is not differentiable.}
\begin{equation}\label{eq: Nq}
N_q(T) = \sum_{l=0}^q a_q(l)\int_0^T H_{2(q-l)}(f(t)) H_{2l} (f'(t)/\si) \ dt,
\end{equation}
and $H_l$ is the $l$'th Hermite polynomial. Further each $N_q(T)$ belongs to the $2q$'th Wiener chaos which yields
\[
\E[N(T)] = \frac{\si}{\pi} N_0(T) = \frac{\si}{\pi}T,
\]
and
\begin{equation} \label{eq : variance chaos decomposition}
\var[N(T)] =\frac{\si^2}{\pi^2} \sum_{q=1}^\infty 4^{-q} \E[N_q(T)^2].
\end{equation}
Furthermore
\begin{equation}\label{eq: var bigger first}
  \var[N(T)]\ge \frac{\si^2}{\pi^2} \frac{\E[N_1(T)^2]}4.
\end{equation}
The next lemma allows us to evaluate $\E \left[ N_q(T)^2 \right]$
\begin{lem}\label{lem: ENq^2}
For all $q \in \N$
\begin{equation*}
\E \left[ N_q(T)^2 \right] = 2\int_{0}^{T} \left( T - t \right) \widetilde{P}_q \left(  r(t), \frac{r'(t)}{\sigma}, \frac{r''(t)}{\sigma^2} \right) dt,
\end{equation*}
where $\widetilde{P}_q$ is given by \eqref{eq : polys no correction}.
\end{lem}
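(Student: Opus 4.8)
The plan is to compute $\E[N_q(T)^2]$ directly from the expression \eqref{eq: Nq} for $N_q(T)$, expanding the square and using the orthogonality and covariance structure of Hermite polynomials of jointly Gaussian variables. First I would write
\[
\E[N_q(T)^2] = \sum_{l_1,l_2=0}^q a_q(l_1)a_q(l_2) \int_0^T\!\!\int_0^T \E\bigl[H_{2(q-l_1)}(f(s))H_{2l_1}(f'(s)/\si)\,H_{2(q-l_2)}(f(t))H_{2l_2}(f'(t)/\si)\bigr]\,ds\,dt,
\]
and the task reduces to evaluating the four-fold Hermite expectation for the Gaussian vector $(f(s),f'(s)/\si,f(t),f'(t)/\si)$. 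Here I would record the covariance entries: $\E[f(s)f(t)] = r(s-t)$, $\E[f(s)f'(t)/\si] = -r'(s-t)/\si$ (with the appropriate sign), $\E[f'(s)f'(t)/\si^2] = -r''(s-t)/\si^2$, while $f(s)\perp f'(s)$ (as $r'(0)=0$), $\var f(s)=1$, $\var(f'(s)/\si)=1$. By stationarity the integrand depends only on $u=s-t$, so the double integral collapses via the standard change of variables to $2\int_0^T (T-u)\,(\cdots)\,du$, which already explains the prefactor $2(T-t)$.

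The core computation is the diagram/Mehler formula for the expectation of a product of Hermite polynomials in jointly Gaussian variables: $\E\bigl[\prod_i H_{m_i}(\xi_i)\bigr]$ equals a sum over complete pairings (Feynman diagrams) of the $\sum m_i$ ``half-edges,'' where each edge joining a copy of $\xi_i$ to a copy of $\xi_j$ contributes the covariance $\E[\xi_i\xi_j]$, and edges within the same node are forbidden when the variable has unit variance and the $H_m$ are the (probabilists') Hermite polynomials (no self-loops). With only two nodes ``at $s$'' — namely $f(s)$ with $2(q-l_1)$ half-edges and $f'(s)/\si$ with $2l_1$ half-edges — and two at $t$, the only available edges are $f(s)$–$f(t)$, $f(s)$–$f'(t)$, $f'(s)$–$f(t)$, $f'(s)$–$f'(t)$. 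Letting $n$ be the number of $f'(s)$–$f'(t)$ edges, a parity/counting argument forces the number of $f(s)$–$f'(t)$ edges to equal the number of $f'(s)$–$f(t)$ edges (both equal some value tied to $l_1,l_2,n$), the number of $f(s)$–$f(t)$ edges to be $2(q-l_1-l_2)+n$, and the combinatorial count of such diagrams is exactly $b_q(l_1,l_2,n)$ in \eqref{eq : b coeffs}; the range of $n$ is dictated by requiring all four edge-multiplicities to be nonnegative, i.e. $\max(0,2(l_1+l_2-q))\le n\le\min(2l_1,2l_2)$. Substituting the covariances — $r(u)$ raised to the $f$–$f$ edge count, $(r'(u)/\si)$ to the mixed edge count (which is even, namely $2(l_1+l_2-n)$, absorbing signs), and $(r''(u)/\si^2)$ (again absorbing the sign) to the $n$-th power — reproduces term by term the monomial $x^{2(q-l_1-l_2)+n}y^{2(l_1+l_2-n)}z^n$ with coefficient $a_q(l_1)a_q(l_2)b_q(l_1,l_2,n)$, which is precisely $\widetilde P_q$ in \eqref{eq : polys no correction}.

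The main obstacle is the bookkeeping in the diagram formula: carefully verifying that the number of pairings with prescribed edge-multiplicities is the closed-form multinomial $b_q(l_1,l_2,n)$, and that the sign conventions coming from $r'$ and $r''$ (each of which enters with a minus sign in the covariances, but always to an even or matched power) conspire to leave the formula with no stray signs. I would handle this by first treating the ``node at $s$'' and ``node at $t$'' as two groups, choosing how the $2l_1$ half-edges at $f'(s)$ split between partners $f(t)$ and $f'(t)$, likewise for $f(s)$, then counting the number of ways to realise a given bipartite multidegree sequence as a perfect matching — a product of multinomial coefficients and factorials that simplifies to \eqref{eq : b coeffs}. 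One should also note that $H_0=1$, so the $l=0$ or $l=q$ boundary terms (where one Hermite factor is constant) are handled uniformly by the convention $1/n!=0$ for negative $n$, which automatically truncates the sum over $n$ correctly. Finally, I would remark that the interchange of $\E$ with the finite sum over $l_1,l_2$ and with the double integral is justified since $N_q(T)\in L^2$ (the expansion converges in $L^2(\Pro)$ by Kratz–L\'eon) and the integrands are continuous on $[0,T]^2$.
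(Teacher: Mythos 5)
Your plan is essentially the paper's own proof: expand the square of \eqref{eq: Nq}, evaluate the four-fold Hermite expectation via the diagram formula with $n$ counting the $f'(s)$--$f'(t)$ edges (this is exactly the paper's Lemma~\ref{lem : diagram chaos contribution}, including the count $b_q(l_1,l_2,n)$ and the range of $n$), and collapse the double integral by stationarity. One small correction: the two mixed edge multiplicities ($f'(s)$--$f(t)$ and $f(s)$--$f'(t)$) are $2l_1-n$ and $2l_2-n$ and are \emph{not} forced to be equal unless $l_1=l_2$; only their sum $2(l_1+l_2-n)$ enters the monomial, and the sign cancellation comes from $(-1)^n$ (the $-r''$ edges) multiplying $(-1)^{2l_1-n}=(-1)^n$ (the mixed family whose covariance carries the minus sign), so your conclusion stands but not for the reason stated.
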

We now show how this lemma yields the desired expression.

\begin{proof}[Proof of Proposition~\ref{prop: main}, assuming Lemma~\ref{lem: ENq^2}]
Lemma~\ref{lem: ENq^2} yields
\begin{equation*}
\E \left[ N_q(T)^2 \right] = V_q(T) - \frac{2c_q}{\sigma^2} \int_{0}^{T} \left(T - t \right)
\left( r(t)^{2q-1} r''(t) + (2q-1) r(t)^{2q-2} r'(t)^2 \right) \ dt.
\end{equation*}
Note that $ r(t)^{2q-1} r''(t) + (2q-1) r(t)^{2q-2} r'(t)^2 = \frac{d^2}{dt^2} \left[ \frac{r(t)^{2q}}{2q} \right]$ and so
\begin{align*}
\int_{0}^{T} \left(T - t \right)&
\Big( r(t)^{2q-1} r''(t) + (2q-1) r(t)^{2q-2} r'(t)^2 \Big) \ dt\\
& = \frac{1}{2q} \int_{0}^{T} (T - t) \frac{d^2}{dt^2} \left[ r(t)^{2q}\right] \ dt\\
& = \frac{1}{2q} \left[ (T-t) \cdot 2q \cdot r(t)^{2q-1} r'(t) \Big \vert_{t=0}^{T} + \int_{0}^{T} \frac{d}{dt} \left[ r(t)^{2q} \right]  \ dt \right]\\
&=  \frac{1}{2q} \left[ r(T)^{2q} - 1 \right].
\end{align*}
We therefore have
$$
\E \left[ N_q(T)^2 \right] = V_q(T) + \frac{c_q}{q \sigma^2} \left(1 - r(T)^{2q} \right).
$$
Applying \eqref{eq: var bigger first} yields the desired lower bound
\begin{equation*}
  \var[N(T)]\ge \frac{\si^2}{4\pi^2}\E[N_1(T)^2]= \frac{\si^2}{4\pi^2}V_1(T)+\frac{1}{\pi^2}(1-r(T)^2)
\end{equation*}
while \eqref{eq : variance chaos decomposition} gives
\begin{align*}
\var \left[ N(T) \right] &=
\frac{\sigma^2}{\pi^2} \sum_{q=1}^{\infty} \frac{1}{4^q} \left[ V_q(T) + \frac{c_q}{q \sigma^2} \left(1 - r(T)^{2q} \right)  \right]  \\
& =  \frac{\sigma^2}{\pi^2} \sum_{q=1}^{\infty} \frac{V_q(T)}{4^q}  +
\frac{1}{2\pi^2} \sum_{q=1}^{\infty} \frac{2^{2q} - (2r(T))^{2q}}{q^2 {\binom{2q}{q}}}.
\end{align*}
We identify the last series as
\begin{equation}\label{eq: arcsin taylor}
  \arcsin^2(x) = \frac 1 2  \sum_{q=1}^\infty \frac{2^{2q}}{q^2 \binom{2q}{q}} x^{2q}
\end{equation}
for all $|x|\le 1$ implying that
\begin{align*}
\var [ N(T) ] & = \frac{\sigma^2}{\pi^2} \sum_{q=1}^{\infty} \frac{V_q(T)}{4^q}  + \frac{\arcsin^2(1) - \arcsin^2(r(T)) }{\pi^2}  \\
& = \frac{\sigma^2}{\pi^2} \sum_{q=1}^{\infty} \frac{V_q(T)}{4^q}  +  \frac{\arccos r(T)}{\pi} \left(1-\frac{\arccos r(T)}{\pi} \right),
\end{align*}
where the last equality follows from $\arccos(x) = \frac{\pi}{2} - \arcsin(x)$.
\end{proof}

We now proceed to prove Lemma~\ref{lem: ENq^2}.

\begin{proof}[Proof of Lemma~\ref{lem: ENq^2}]

Squaring the expression for $N_q(T)$ given in \eqref{eq: Nq} yields
\begin{equation*}
  N_q(T)^2 =  \sum_{l_1, l_2=0}^q  a_q(l_1) a_q(l_2)
\int_0^T \int_{0}^{T} H_{2(q-l_1)}(f(t)) H_{2(q-l_2)}(f(s)) H_{2l_1} \left( \frac{f'(t)}{\sigma} \right) H_{2l_2} \left( \frac{f'(s)}{\sigma} \right) ds \,dt.
\end{equation*}
and so
\begin{align*}
  \E \big[ &N_q(T)^2 \big]\\
   & =\sum_{l_1, l_2=0}^q  a_q(l_1) a_q(l_2) \int_0^T \int_{0}^{T} \E \left[ H_{2(q-l_1)}(f(t)) H_{2(q-l_2)}(f(s)) H_{2l_1} \left( \frac{f'(t)}{\sigma} \right) H_{2l_2} \left( \frac{f'(s)}{\sigma} \right) \right] ds\, dt.
\end{align*}
Applying Lemma~\ref{lem : diagram chaos contribution} below, and using the simple change of variables
\[
\int_0^T \int_0^T h(t-s) dt \, ds = \int_{-T}^T (T-|x|) h(x) dx
\]
for any $h\in L^1([-T,T])$, we get
\begin{equation*}
  \E \big[ N_q(T)^2 \big]=\int_{-T}^{T} \left( T - |t| \right) \widetilde{P}_q \left(  r(t), \frac{r'(t)}{\sigma}, \frac{r''(t)}{\sigma^2} \right) dt.
\end{equation*}
Noting that $r$ is an even function and that only even powers of $y$ appear in $\widetilde{P}_q$ yields Lemma~\ref{lem: ENq^2}.
\end{proof}
\begin{lem}\label{lem : diagram chaos contribution}
For all $q\in \N$ and $l_1,l_2 \in \N_0$ such that $0 \le l_1, l_2 \le q$ we have
\begin{align*}
\E \bigg[& H_{2q-2l_1} \left( f(t) \right) H_{2q-2l_2} \left( f(s) \right) H_{2l_1} \left( \frac{f'(t)}{\sigma} \right) H_{2l_2} \left( \frac{f'(s)}{\sigma} \right) \bigg]\\
&= \sum_{n = \max(0, 2l_1 + 2l_2 - 2q)}^{\min(2l_1, 2l_2)}  b_q(l_1, l_2, n)
 \left( \frac{r''(t-s)}{\sigma^2} \right)^n \left( \frac{r'(t-s)}{\sigma} \right)^{2(l_1 + l_2 - n)} \left( r(t-s) \right)^{2(q-l_1-l_2)+n}.
\end{align*}
\end{lem}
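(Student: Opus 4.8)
The plan is to obtain this identity as a direct application of the \emph{diagram formula} (Wick's theorem for expectations of products of Hermite polynomials of jointly Gaussian variables) to the centred four-dimensional Gaussian vector
\[
\big(f(t),\ f(s),\ f'(t)/\sigma,\ f'(s)/\sigma\big).
\]
First I would record its covariance matrix. By the normalisations $r(0)=1$ and $\sigma^2=-r''(0)$ all four coordinates have unit variance, so this is a \emph{standard} Gaussian vector and the classical diagram formula applies with no rescaling. Differentiating $\E[f(t)f(s)]=r(t-s)$ in $t$ and in $s$ gives the six pairwise correlations: $\E[f(t)f(s)]=r(t-s)$, $\E[f'(t)f'(s)/\sigma^2]=-r''(t-s)/\sigma^2$, $\E[f(s)f'(t)/\sigma]=r'(t-s)/\sigma$, $\E[f(t)f'(s)/\sigma]=-r'(t-s)/\sigma$, and — crucially — $\E[f(t)f'(t)]=r'(0)=0$ and $\E[f(s)f'(s)]=r'(0)=0$, the last two vanishing because $r$ is even. (When $f$ is not genuinely differentiable, $f'$ denotes the quadratic-mean derivative, as in the footnote to \eqref{eq: Nq}; the joint Gaussianity and all of these identities are unaffected.)

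Next I would invoke the diagram formula: for a standard Gaussian vector $(\xi_1,\dots,\xi_p)$,
\[
\E\Big[\textstyle\prod_{j=1}^p H_{d_j}(\xi_j)\Big]=\sum_{\Gamma}\ \prod_{\{i,j\}}\E[\xi_i\xi_j]^{\,e_{ij}(\Gamma)},
\]
where $\Gamma$ ranges over diagrams — graphs in which vertex $j$ carries $d_j$ labelled ``legs'', $e_{ij}(\Gamma)$ is the number of edges between $i$ and $j$, there are no self-loops, and $\sum_i e_{ij}=d_j$ — each counted with multiplicity equal to the number of leg-matchings realising the given edge multiplicities, namely $\big(\prod_j d_j!\big)\big/\prod_{\{i,j\}} e_{ij}!$. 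Here the vertices $1,2,3,4$ are $f(t),f(s),f'(t)/\sigma,f'(s)/\sigma$ with degrees $2q-2l_1,\,2q-2l_2,\,2l_1,\,2l_2$. Since $e_{13}=e_{24}=0$ is forced by the two vanishing correlations, every surviving diagram uses only the edges of the $4$-cycle $1\!-\!2\!-\!3\!-\!4\!-\!1$. Writing $n=e_{34}$, the cycle constraints then force $e_{23}=2l_1-n$, $e_{14}=2l_2-n$ and $e_{12}=2(q-l_1-l_2)+n$, and non-negativity of these four multiplicities is precisely the summation range $\max(0,2l_1+2l_2-2q)\le n\le\min(2l_1,2l_2)$.

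Then I would compute the coefficient: the matching count collapses to
\[
\frac{(2q-2l_1)!\,(2q-2l_2)!\,(2l_1)!\,(2l_2)!}{\big(2(q-l_1-l_2)+n\big)!\,(2l_1-n)!\,(2l_2-n)!\,n!}=b_q(l_1,l_2,n),
\]
which is exactly \eqref{eq : b coeffs}, and I would assemble the edge weights: a $4$-cycle diagram with parameter $n$ contributes
\[
r(t-s)^{2(q-l_1-l_2)+n}\Big(\tfrac{r'(t-s)}{\sigma}\Big)^{2l_1-n}\Big(-\tfrac{r'(t-s)}{\sigma}\Big)^{2l_2-n}\Big(-\tfrac{r''(t-s)}{\sigma^2}\Big)^{n},
\]
whose two sign factors combine to $(-1)^{(2l_2-n)+n}=(-1)^{2l_2}=1$, leaving $r(t-s)^{2(q-l_1-l_2)+n}(r'(t-s)/\sigma)^{2(l_1+l_2-n)}(r''(t-s)/\sigma^2)^{n}$; summing over $n$ yields the claimed formula. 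The one genuinely fiddly point — and hence the step to treat most carefully — is the combinatorics: verifying that the number of leg-matchings realising a prescribed edge-multiplicity pattern on four vertices is $\big(\prod_j d_j!\big)\big/\prod_{\{i,j\}} e_{ij}!$, and checking that the cycle constraints force the stated multiplicities and admissible range. Everything else is a one-line covariance computation, a sign check, or a direct comparison with \eqref{eq : b coeffs}; no analytic input beyond the existence of $r''$ (equivalently, of the quadratic-mean derivative $f'$) is required.
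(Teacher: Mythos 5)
Your proposal is correct and follows essentially the same route as the paper: apply the diagram formula to the vector $\big(f(t),f(s),f'(t)/\sigma,f'(s)/\sigma\big)$, use $r'(0)=0$ to kill the two forbidden edges, parametrise the surviving diagrams by the multiplicity $n$ of the $f'(t)$--$f'(s)$ edge, and check that the matching count equals $b_q(l_1,l_2,n)$ and that the signs from $\E[f(t)f'(s)]=-r'(t-s)$ and $\E[f'(t)f'(s)]=-r''(t-s)$ cancel. The only cosmetic difference is that you count leg-matchings via the closed formula $\bigl(\prod_j d_j!\bigr)/\prod_{\{i,j\}}e_{ij}!$, whereas the paper builds the same count step by step from binomial coefficients.
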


Before proving the lemma we first recall the diagram formula.
\begin{lem}[The diagram formula \citelist{\cite{BM}*{Page 432} \cite{J}*{Theorem 1.36}}]
Let $X_1, \ldots, X_k$ be jointly Gaussian random variables, and $n_1, \ldots, n_k \in \N$.
A Feynman diagram is a graph with $n_1 + \ldots + n_k$ vertices such that
\begin{itemize}
\item There are $n_i$ vertices labelled $X_i$ for each $i$ (and each vertex has a single label). For a vertex $a$ we write $X_{\ell(a)}$ for the label of $a$.
\item Each vertex has degree $1$.
\item No edge joins $2$ vertices with the same label.
\end{itemize}
Let $\mathscr{D}$ be the set of such diagrams. For $\gamma \in \mathscr{D}$ we define the value of $\gamma$ to be
$$
v(\gamma) = \prod_{(a,b) \in E(\gamma)} \E \left[ X_{\ell(a)} X_{\ell(b)} \right]
$$
where $E(\gamma)$ is the set of edges of $\gamma$.
Then
$$
\E \left[ H_{n_1}(X_1) \cdot \cdots \cdot H_{n_k} (X_k) \right] = \sum_{\gamma \in \mathscr{D}} v(\gamma).
$$
\end{lem}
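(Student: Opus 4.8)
The plan is to apply the diagram formula directly to the four jointly Gaussian, unit-variance random variables
\[
X_1 = f(t),\quad X_2 = f(s),\quad X_3 = \frac{f'(t)}{\sigma},\quad X_4 = \frac{f'(s)}{\sigma},
\]
with Hermite indices $n_1 = 2q-2l_1$, $n_2 = 2q-2l_2$, $n_3 = 2l_1$, $n_4 = 2l_2$ (these are unit-variance because $r(0)=1$ and $\sigma^2=-r''(0)=\E[f'(0)^2]$). First I would record the six pairwise covariances by differentiating $\E[f(t)f(s)]=r(t-s)$ in $t$ and in $s$:
\[
\E[X_1 X_2] = r(t-s),\quad \E[X_1 X_4] = -\frac{r'(t-s)}{\sigma},\quad \E[X_2 X_3] = \frac{r'(t-s)}{\sigma},\quad \E[X_3 X_4] = -\frac{r''(t-s)}{\sigma^2}.
\]
The remaining two covariances vanish: since $r$ is even, $r'(0)=0$, whence $\E[X_1 X_3]=\E[X_2 X_4]=0$. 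This vanishing is the crucial structural input, since it forbids edges between $X_1,X_3$ and between $X_2,X_4$ in any contributing diagram.

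Next I would invoke the diagram formula. A Feynman diagram here is a perfect matching of the $n_1+n_2+n_3+n_4$ labelled vertices with no edge joining two equally labelled vertices; by the covariance computation the only edges carrying nonzero weight are of the four types $(1,2)$, $(1,4)$, $(2,3)$, $(3,4)$. Writing $a,b,c,n$ for the number of edges of each of these types, the degree constraint at the four groups of vertices reads
\[
a + b = 2q - 2l_1,\quad a + c = 2q - 2l_2,\quad c + n = 2l_1,\quad b + n = 2l_2.
\]
Solving, I would find that all multiplicities are determined by $n$ alone: $b = 2l_2 - n$, $c = 2l_1 - n$, and $a = 2(q-l_1-l_2)+n$. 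The nonnegativity of $a,b,c,n$ is then exactly $\max(0,2l_1+2l_2-2q)\le n\le \min(2l_1,2l_2)$, which reproduces the summation range in the statement.

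It then remains to assemble the weight and the count for fixed $n$. The product of edge weights is $r^a(-r'/\sigma)^b(r'/\sigma)^c(-r''/\sigma^2)^n$, which after collecting equals $(-1)^{b+n}$ times the monomial appearing in the statement; since $b+n=2l_2$ is even, the sign is $+1$, matching the sign-free formula, and the exponents $2(q-l_1-l_2)+n$, $2(l_1+l_2-n)$, $n$ come out correctly. For the combinatorial factor I would count the matchings with these prescribed multiplicities by splitting each group of vertices according to which neighbouring group its vertices match into (contributing the binomials $\binom{2q-2l_1}{a}$, $\binom{2q-2l_2}{a}$, $\binom{2l_1}{c}$, $\binom{2l_2}{b}$) and then pairing up across each of the four bonds (contributing $a!\,b!\,c!\,n!$); a short simplification collapses this product to $\tfrac{(2q-2l_1)!(2q-2l_2)!(2l_1)!(2l_2)!}{a!\,b!\,c!\,n!}$, which upon substituting $a,b,c$ is precisely $b_q(l_1,l_2,n)$. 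Summing over $n$ yields the claimed identity. The only genuine obstacle is this combinatorial bookkeeping — arranging the binomials and factorials to telescope onto $b_q$ — but the vanishing of the two covariances reduces the diagram structure to a single free parameter $n$, which keeps the count manageable.
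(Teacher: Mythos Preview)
Your proposal does not prove the quoted statement. The quoted lemma is the \emph{diagram formula} itself, which the paper does not prove but merely cites from \cite{BM} and \cite{J}. What you have written is instead a proof of the \emph{next} lemma in the paper (Lemma~\ref{lem : diagram chaos contribution}), which applies the diagram formula to the four variables $f(t),f(s),f'(t)/\sigma,f'(s)/\sigma$ to obtain the explicit expression involving $b_q(l_1,l_2,n)$.

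Read as a proof of Lemma~\ref{lem : diagram chaos contribution}, your argument is correct and follows essentially the same route as the paper: apply the diagram formula, observe that $\E[f(t)f'(t)]=\E[f(s)f'(s)]=r'(0)=0$ kills two of the six edge types, parametrise the surviving diagrams by the single integer $n$ (the number of $X_3$--$X_4$ edges), read off the range $\max(0,2l_1+2l_2-2q)\le n\le\min(2l_1,2l_2)$, and count the matchings to obtain $b_q(l_1,l_2,n)$. Your treatment is in fact slightly more careful than the paper's on one point: you track the signs coming from $\E[X_1X_4]=-r'(t-s)/\sigma$ and $\E[X_3X_4]=-r''(t-s)/\sigma^2$ and verify that the total sign $(-1)^{b+n}=(-1)^{2l_2}=+1$, whereas the paper's displayed computation of $v(\gamma)$ silently absorbs these signs.
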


\begin{proof}[Proof of Lemma~\ref{lem : diagram chaos contribution}]
We apply the diagram formula to the random variables $f(t),f(s),f'(t) / \sigma$ and $f'(s) / \sigma$ and corresponding integers $2(q-l_1), 2(q-l_2), 2l_1$ and $2l_2$ and denote by $\mathscr{D}$ the collection of relevant Feynman diagrams. Since $\E \left[f(t) f'(t) \right] = \E \left[ f(s) f'(s) \right] = r'(0) = 0$, it is enough to consider diagrams whose edges do not join vertices labeled $f(t) $ to $f'(t)/ \sigma$ or vertices labeled $f(s)$ to $f'(s) / \sigma$.

\begin{figure}
  \centering
  \includegraphics[clip, trim=1cm 11cm 2cm 2cm,  width=0.4\columnwidth]{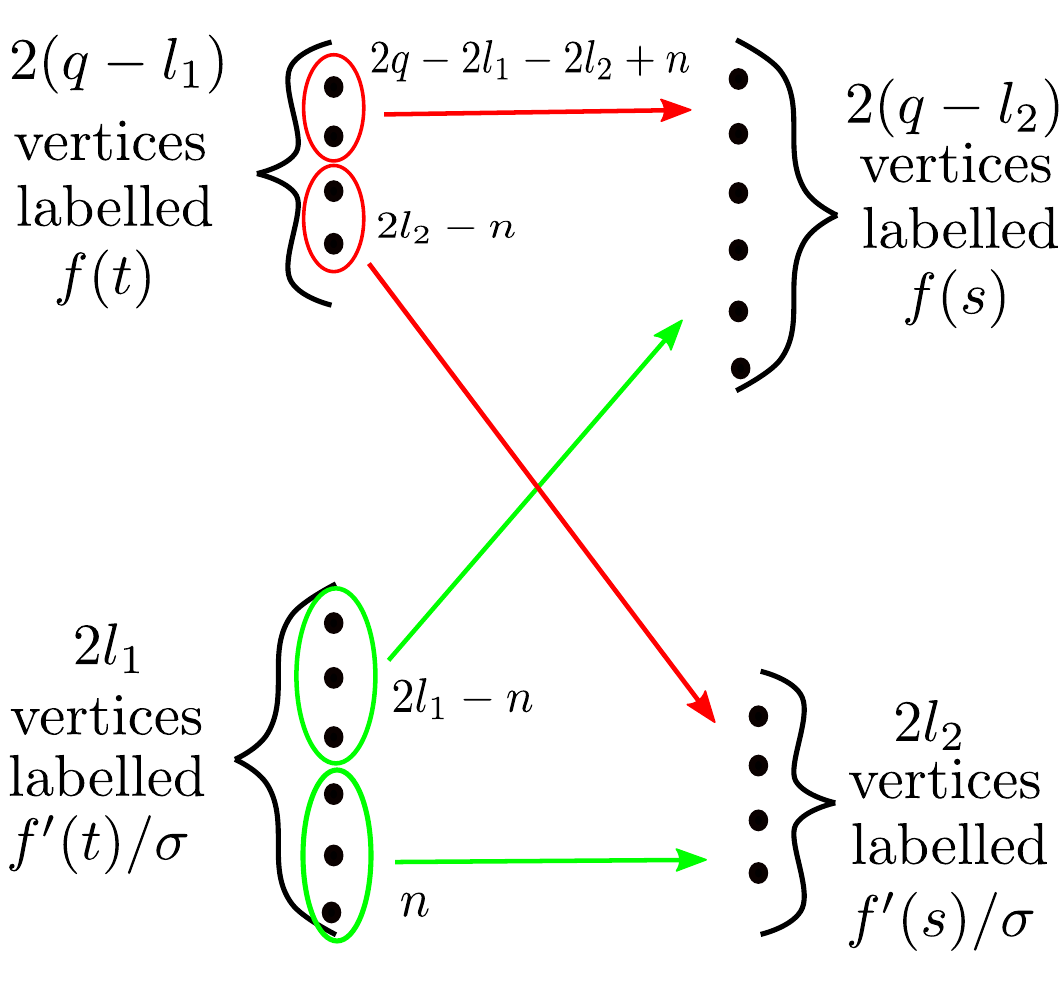}
  \caption{Counting the number of Feynman diagrams}
  \label{fig: count diag}
\end{figure}

Let $n$ be the number of edges joining a vertex labeled $f'(t) / \sigma$ to a vertex labeled $f'(s) / \sigma$, see Figure~\ref{fig: count diag}. Then $0 \le n \le \min(2l_1, 2l_2)$. Moreover, as the other vertices labeled $f'(t)/ \sigma$ must be joined to vertices labeled $f(s)$, we see that $2l_1 - n \le 2q - 2l_2$, so $\max(0, 2l_1 + 2l_2 - 2q) \le n \le \min(2l_1, 2l_2)$. Further, every value of $n$ in this range is attained by some diagram.

We compute the value of such a diagram to be
\begin{align*}
v(\gamma) &= \E \left[ f'(t)f'(s) / \sigma^2 \right]^{n} \E \left[ f'(t) f(s) / \sigma \right]^{2l_1 - n}
\E \left[ f(t) f'(s) / \sigma \right]^{2l_2-n} \E \left[ f(t) f(s) \right]^{2q-2l_1-2l_2+n}  \\
&= \left( \frac{r''(t-s)}{\sigma^2} \right)^n \left( \frac{r'(t-s)}{\sigma} \right)^{2(l_1 + l_2 - n)}  \left(r(t-s)\right)^{2(q-l_1-l_2)+n} .
\end{align*}
Finally, we count the number of such diagrams. There are
$$
\binom{2l_1}{n} \binom{2l_2}{n} n!
$$
ways to choose $n$ vertices labeled $f'(t) / \sigma$, to choose $n$ vertices labeled $f'(s) / \sigma$ and to pair them. There are
$$
\binom{2q-2l_2}{2l_1-n} (2l_1-n)!
$$
ways to choose $2l_1-n$ vertices labeled $f(s)$ and to pair them with the remaining vertices labeled $f'(t) / \sigma$. There are
$$
\binom{2q-2l_1}{2q-2l_1-2l_2+n} (2q-2l_1-2l_2+n)!
$$
ways to choose $2q-2l_1-2l_2+n$ vertices labeled $f(t)$ and to pair them with the remaining ones labeled $f(s)$. There are $$(2l_2-n)!$$ ways to pair the remaining vertices labeled $f(t)$ and $f'(s) / \sigma$. Since these choices are independent, we multiply these counts to get that there are $b_q(l_1,l_2,n)$ such diagrams, where $b_q$ is given by \eqref{eq : b coeffs}.
Applying the diagram formula completes the proof.
\end{proof}

\section{Lower bound}\label{sec: LB}
In this section we prove Proposition~\ref{prop: LB}. From Proposition~\ref{prop: main} we have
\begin{equation*}
  \var[N(T)]\ge \frac{\sigma^2}{4\pi^2} V_1(T)
\end{equation*}
and the first statement of Proposition~\ref{prop: LB} follows simply by computing
\begin{equation*}
P_1(x,y,z) = 2(x+z)^2
\end{equation*}
which gives
\begin{equation}\label{eq: V_1}
  V_1(T) = 4T\int_{0}^T \left(1-\frac{t}{T}\right) \left(r(t)+\frac{r''(t)}{\si^2}\right)^2 dt.
\end{equation}

To deduce the second statement it is enough to find an interval $I$ such that $\left | r+\frac{r''}{\si^2} \right | \ge C>0$ on $I$. But this follows from the fact that $r''$ is continuous and $r$ is not cosine.

\section{Upper bound}
In this section, we prove Theorems~\ref{thm: UB} and \ref{thm : asymp constant}. Our method is to bound each $V_q(T)$ by $V_1(T)$ and apply Proposition~\ref{prop: main}. We achieve this by proving the following properties of the polynomials $P_q$ (recall \eqref{eq : polys with correction}).

\begin{prop} \label{prop: divisibility}
For all $q\ge 1$ we have $(x+z)^{2}\mid P_{q}(x,y,z)$.
\end{prop}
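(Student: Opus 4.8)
The plan is to show that $(x+z)^2$ divides $P_q(x,y,z)$ by restricting to the line $z = -x$ and verifying that $P_q$ vanishes there \emph{to second order}. Since $P_q$ is a polynomial in three variables, a clean way to organize this is to substitute $z = -x$ and treat the result as a polynomial in $x$ and $y$; I want to show this polynomial is identically zero, and then (by differentiating in $z$ and setting $z=-x$ again) show the $\partial_z$ derivative also vanishes. Equivalently, writing $P_q(x,y,z) = A(x,y) + (x+z)B(x,y,z)$ by the division algorithm in $z$, it suffices to prove $A \equiv 0$ and $B(x,y,-x) \equiv 0$.

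The key structural input is the explicit form of $\widetilde P_q$ in \eqref{eq : polys no correction}, and the observation that the variables $x$ (playing the role of $r$) and $z$ (playing the role of $r''/\sigma^2$) enter in a very constrained way: in each monomial the exponents are $x^{2(q-l_1-l_2)+n}$, $y^{2(l_1+l_2-n)}$, $z^n$. The first step is to set $z = -x$ in $\widetilde P_q$ and regroup; I would fix the total degree $2q$ (every monomial has degree $2q$) and the exponent $m := 2(l_1+l_2) - n$ of $y$, and collect the coefficient of $x^{2q-m} y^{m}$ after the substitution $z^n \mapsto (-1)^n x^n$. That coefficient is a signed sum over the admissible $(l_1,l_2,n)$ with $2(l_1+l_2)-n = m$ of $a_q(l_1)a_q(l_2) b_q(l_1,l_2,n) (-1)^n$, and the claim becomes a combinatorial identity asserting this sum equals the correction term coming from $c_q$ when $m \in \{0,2\}$ and equals $0$ otherwise — more precisely, that $\widetilde P_q(x,y,-x) = -c_q\big((-1)\,x^{2q} + (2q-1)x^{2q-2}y^2\big)$, which exactly cancels $c_q(x^{2q-1}z + (2q-1)x^{2q-2}y^2)|_{z=-x}$, giving $P_q(x,y,-x)\equiv 0$. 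The analogous (slightly more involved) identity for $\partial_z P_q$ at $z=-x$ then finishes divisibility by $(x+z)^2$.

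The combinatorial identity itself is where the real work lies, and it is the step I expect to be the main obstacle. I see two natural routes. The first is a generating-function / integral-representation approach: the coefficients $a_q(l)$ come from the Hermite expansion of $|x|$ (the factor $\frac1{2l-1}$ is the signature of the Taylor coefficients of $\sqrt{1-u}$), and $b_q(l_1,l_2,n)$ is a diagram count, so $\widetilde P_q\big(r,r'/\sigma,r''/\sigma^2\big)$ is literally $\tfrac12\,\E[\,|Z_1|\,|Z_2|\,\mathbf{1}\cdots]$-type object; one can hope to identify $\widetilde P_q(x,y,z)$ with (the chaos-$2q$ component of) an explicit function of a $2\times 2$ covariance built from $x,y,z$, evaluate it on the degenerate locus $z=-x$ (where the relevant Gaussian vector becomes singular — this is exactly the degenerate-process locus from the introduction, where $r + r''/\sigma^2 \equiv 0$), and read off vanishing. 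The second, more hands-on route is to prove the identity directly by manipulating the hypergeometric-type sum over $n$ for fixed $l_1,l_2$ (a Vandermonde/Saalschütz summation should collapse the inner sum), and then summing over $l_1,l_2$ using the generating function $\sum_l a_q(l) u^l$. I would attempt the second route first since it is self-contained, falling back on the probabilistic interpretation to guess and check the closed form if the algebra stalls. Throughout I will lean on the fact, already noted in the paper, that the $c_q$-correction was introduced precisely so that $(x+z)^2 \mid P_q$ exactly (rather than $\widetilde P_q$), which pins down what the target of the identity must be.
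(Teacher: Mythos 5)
Your reduction is sound and is in fact the same one the paper uses: writing $P_q = A(x,y) + (x+z)B(x,y,z)$ and demanding $A\equiv 0$, $B|_{z=-x}\equiv 0$ is equivalent to the paper's Lemma~\ref{lem: homo}, which exploits homogeneity to reduce to $P_q(-1,y,1)=0$ and $\tfrac{\partial P_q}{\partial x}(-1,y,1)=0$. You have also correctly identified the target: after setting $z=-x$ and collecting the coefficient of each fixed power of $y$, the claim becomes exactly the family of combinatorial identities that the paper isolates as Proposition~\ref{prop:For-all-,sum(H_k)} (your normalisation of the $c_q$-correction is right, though note the $y$-exponent is $2(l_1+l_2-n)$, not $2(l_1+l_2)-n$).

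The genuine gap is that you never prove these identities — you name two candidate strategies and explicitly defer the work, and the strategy you say you would try first is unlikely to succeed as described. Once the power of $y$ is fixed, $n$ is \emph{determined} by $l_1+l_2$ (namely $n=l_1+l_2-k$), so there is no one-dimensional ``inner sum over $n$ for fixed $l_1,l_2$'' left to collapse by Vandermonde or Saalsch\"utz; what remains is a genuinely two-dimensional hypergeometric sum over $(l_1,l_2)$. This is precisely why the paper resorts to the multivariate Zeilberger algorithm: it produces explicit rational certificates $R_q^{(1)},R_q^{(2)}$ whose telescoping yields a three-term recurrence in $q$ for the sums $S_q(k)$ (and a two-term one for $S_q'(k)$), after which the identities follow by induction on $q$, together with a separate generating-function computation for the boundary case $k=q$ (Lemma~\ref{lem: k=q vanish}) and explicit base cases $q=1,2$. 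Your probabilistic fallback is suggestive — the locus $z=-x$ is indeed the degenerate-process locus — but the degenerate process only populates the circle $x^2+y^2=1$ on that plane and one would still need to justify chaos-by-chaos vanishing plus the first-order $z$-derivative statement, so it does not substitute for the combinatorial argument without substantial additional work. As it stands, the heart of the proof is missing.
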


\begin{prop}\label{prop: Crude Arcones-type bound}
Set $M = \max(|x| + |y|, |y| + |z|)$. Then
\begin{equation}
\frac{|P_q(x,y,z)|}{(x+z)^2} \le \frac{e^2}{\sqrt{\pi }} q^{3/2} 4^{q} M^{2q-2}.
\end{equation}
\end{prop}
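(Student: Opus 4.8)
\textbf{Proof plan for Proposition~\ref{prop: Crude Arcones-type bound}.}
The plan is to bound the polynomial $P_q$ coefficient-by-coefficient and then, having the divisibility from Proposition~\ref{prop: divisibility} in hand, to control the quotient. Since $(x+z)^2 \mid P_q$, I first write $P_q(x,y,z) = (x+z)^2 Q_q(x,y,z)$ for a polynomial $Q_q$ which is homogeneous of degree $2q-2$ (note every monomial in $\widetilde P_q$ has total degree $2q$, and the correction term in \eqref{eq : polys with correction} is also degree $2q$, so $P_q$ is homogeneous of degree $2q$ and hence $Q_q$ is homogeneous of degree $2q-2$). The key observation is that a homogeneous polynomial $Q_q$ of degree $d=2q-2$ with coefficients $\kappa_\alpha$ satisfies $|Q_q(x,y,z)| \le \big(\sum_\alpha |\kappa_\alpha|\big) \max\{|x|,|y|,|z|\}^{d}$, but more usefully, by grouping the variables $x,z$ against $y$ one gets a bound of the shape $|Q_q(x,y,z)| \le \big(\sum_\alpha |\kappa_\alpha|\big) M^{d}$ once one checks that each monomial $x^a y^b z^c$ with $a+b+c = 2q-2$ is bounded by $M^{2q-2}$ — this needs $b \le 2q-2$ together with $\max\{|x|+|y|,|y|+|z|\}$ dominating each of $|x|,|z|$ and $|y|$, which is immediate. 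So the whole problem reduces to the arithmetic estimate
\[
\sum_{\alpha} |\kappa_\alpha| = Q_q(1,1,1) \le \frac{e^2}{\sqrt\pi}\, q^{3/2}\, 4^q,
\]
where I have used that $Q_q$ has nonnegative... (actually one cannot assume the $\kappa_\alpha\ge 0$, so the bound is really on $\sum_\alpha|\kappa_\alpha|$, which may exceed $Q_q(1,1,1)$; see below).

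The cleanest route to $\sum_\alpha |\kappa_\alpha|$ is to avoid computing $Q_q$ at all and instead bound $P_q$ directly, then divide. One has $\sum_\alpha |\kappa_\alpha| \le \sum (\text{abs. values of coeffs of } P_q) / (\text{something})$ — but division of polynomials does not behave well with respect to $\ell^1$ norms of coefficients, so instead I would argue as follows. Evaluate at a convenient point: taking $x = z = 1$ and $y$ arbitrary, $P_q(1,y,1) = 4\, Q_q(1,y,1)$, so $|Q_q(1,y,1)| = \tfrac14 |P_q(1,y,1)|$. Since $Q_q$ is homogeneous of degree $2q-2$ and, crucially, I expect that the monomials of $Q_q$ all have \emph{even} power of $y$ (inherited from $\widetilde P_q$, where only even powers of $y$ occur, and from the correction term), the polynomial $y \mapsto Q_q(1,y,1)$ has all nonnegative contributions from $y^{2k}$ terms \emph{only if} its coefficients are nonnegative. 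The real point: I will show $\sum_\alpha |\kappa_\alpha| \le C\, P_q(1,1,1)$ is \emph{false} in general, so instead I bound $\sum_\alpha|\kappa_\alpha|$ by evaluating the polynomial with absolute-value coefficients, which I denote $|Q_q|(1,1,1)$, and relate this to $|P_q|(1,1,1) := \sum(\text{abs. coeffs of }P_q)$ via the identity $|P_q|(1,1,1) = |(x+z)^2 Q_q|$ evaluated at $(1,1,1)$; because $(1+1)^2 = 4$ and all coefficients of $(x+z)^2 = x^2 + 2xz + z^2$ are nonnegative, the triangle inequality gives $\sum_\alpha |\kappa_\alpha| \le |P_q|(1,1,1)$ — wait, that inequality goes the wrong way. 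The correct statement is $|P_q|(1,1,1) \le 4 \sum_\alpha|\kappa_\alpha|$ and there is no easy reverse bound; so I abandon the division-of-$\ell^1$-norms idea entirely.

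\textbf{The actual approach.} I will bound $|P_q(x,y,z)|/(x+z)^2$ pointwise by reducing to a one-variable problem. Fix $(x,y,z)$ with $M = \max(|x|+|y|, |y|+|z|)$; without loss of generality normalize $M = 1$, so $|x|+|y|\le 1$ and $|y|+|z|\le 1$. It suffices to show $|P_q(x,y,z)| \le \frac{e^2}{\sqrt\pi} q^{3/2} 4^q (x+z)^2$. Now $P_q$ arises (Lemma~\ref{lem: ENq^2} and Proposition~\ref{prop: main}) as essentially the integrand for $\E[N_q(T)^2]$ plus a correction; concretely, tracing through the diagram formula, $\widetilde P_q(r(t-s), r'(t-s)/\sigma, r''(t-s)/\sigma^2)$ is a second moment and hence nonnegative when its arguments come from an actual covariance structure — but as a polynomial identity this does not give positivity for all real $(x,y,z)$. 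The workable idea: bound the coefficients $a_q(l_1)a_q(l_2) b_q(l_1,l_2,n)$ and $c_q$ crudely. From \eqref{eq : a coeffs}, $|a_q(l)| \le \frac{1}{l!(q-l)!}$, so $\sum_l |a_q(l)| \le \frac{2^q}{q!}$ by the binomial theorem. From \eqref{eq : b coeffs} one checks $b_q(l_1,l_2,n) \le (2q)! \cdot (\text{manageable factorials})$; summing over the admissible $n$ (at most $2q+1$ values) and over $l_1,l_2$, and using $c_q = \frac{2^{4q}}{2q\binom{2q}{q}} \le \frac{2^{4q}}{2q} \cdot \frac{\sqrt{\pi q}}{4^q}\cdot(1+o(1))$ via Stirling, I expect the sum of absolute values of all coefficients of $P_q$ to be at most $C q^{3/2} 16^q / (\text{something})$. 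Then I combine with the factor $(x+z)^{-2}$, which is the delicate point — one cannot simply drop it. Here I use that since $(x+z)^2 \mid P_q$ as a polynomial, $P_q/(x+z)^2 = Q_q$ is a genuine polynomial, so its sup over the region $\{|x|,|y|,|z| \le 1\}$ (which contains our normalized region) is bounded by the sum of $|$coefficients of $Q_q|$; and the coefficients of $Q_q$ can be bounded in terms of those of $P_q$ via the fact that dividing by $(x+z)^2$ in, say, the $x$-variable is an iterated synthetic division whose coefficient growth I can control by a geometric series (each division by $(x+z)$ inflates the $\ell^1$-norm of coefficients by at most a factor $C$ depending on the degree, giving $2q$ such factors — but one must be careful this does not destroy the bound). \emph{I expect this last step — controlling the coefficients of the quotient $Q_q$ — to be the main obstacle}, and the honest way around it is probably to establish a closed form or a clean combinatorial bound for $Q_q$ directly from the structure of $P_q$, perhaps by exhibiting $Q_q$ as itself a sum over diagrams with one ``defect'' pair removed, rather than by blind polynomial division. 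Once $\sum|$coeffs of $Q_q| \le \frac{e^2}{\sqrt\pi} q^{3/2} 4^q$ is in hand, the pointwise bound $|Q_q(x,y,z)| \le \big(\sum|\text{coeffs}|\big) M^{2q-2}$ follows from the monomial estimate $|x|^a|y|^b|z|^c \le M^{a+b+c} = M^{2q-2}$ (valid since each of $|x|,|y|,|z| \le M$), completing the proof.
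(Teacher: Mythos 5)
Your proposal correctly identifies the crux --- controlling the quotient $Q_q = P_q/(x+z)^2$ --- but it never overcomes it; you say yourself that bounding the coefficients of $Q_q$ is ``the main obstacle'' and that one would ``probably'' need a closed form or a combinatorial interpretation of $Q_q$, and the proof stops there. That is a genuine gap, not a routine detail: polynomial division does not control $\ell^1$ norms of coefficients, exactly as you observe, and no bound on $\sum|\text{coeffs of }Q_q|$ is ever produced. The paper's way around this avoids computing $Q_q$ altogether. Since $(x+z)^2\mid P_q$, one has $P_q(-z,y,z)=\frac{\partial P_q}{\partial x}(-z,y,z)=0$, so Taylor's theorem with Lagrange remainder in the variable $x$, expanded at $x=-z$, gives
\[
P_q(x,y,z)=\tfrac12\,\frac{\partial^2 P_q}{\partial x^2}(t,y,z)\,(x+z)^2
\]
for some $t$ between $x$ and $-z$ (so $|t|\le\max(|x|,|z|)\le M-|y|$, keeping $(t,y,z)$ in the domain $D_M=\{|x|+|y|\le M,\ |y|+|z|\le M\}$). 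The whole problem thus reduces to bounding $\sup_{D_M}|\partial_x^2 P_q|$, which is done by an explicit Arcones-style computation: after rewriting $\partial_x^2\widetilde P_q$ as a sum over $2\times2$ matrices with multinomial denominators, the AM--GM step $|\alpha_q(k)\alpha_q(l)|\le\frac{\alpha_q(k)^2+\alpha_q(l)^2}{2}$ decouples the double sum, and the binomial theorem collapses the inner sum into $\prod_i\frac{(|x_{i1}|+|x_{i2}|)^{k_i}}{k_i!}\le\frac{M^{2q-2}}{k!(2q-2-k)!}$; the remaining sum $\sum_k\frac{\alpha_q(k)^2}{k!(2q-k)!}$ is identified as $c_q$ (Lemma~\ref{lem: c_q}) and estimated by Stirling.

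A further warning about your fallback step: even if you did obtain an $\ell^1$ bound on the coefficients of $Q_q$, your proposed monomial estimate $|x|^a|y|^b|z|^c\le M^{a+b+c}$ uses only $\max(|x|,|y|,|z|)\le M$ and discards the paired structure of $M=\max(|x|+|y|,\,|y|+|z|)$. That structure is not cosmetic: in the paper's argument the binomial-theorem collapse produces precisely the factors $(|x|+|y|)^{k_1}(|y|+|z|)^{k_2}$, and replacing them by individual monomial bounds costs an extra factor of order $2^{k_1+k_2}=2^{2q-2}$, which would turn the claimed $4^q$ into something like $16^q$ and destroy the summability of $V_q(T)/4^q$ needed in Theorem~\ref{thm: UB}. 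So the route through $\sum|\text{coeffs of }Q_q|$ is not merely unfinished --- it is unlikely to yield the stated constant even if completed.
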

Proving Proposition~\ref{prop: divisibility} amounts to proving some identities for the coefficients of the polynomials $P_q$, which is deferred to Section~\ref{section: proof of divisibility} where we implement a general method due to Zeilberger \cite{AZ06}. We proceed to prove Proposition~\ref{prop: Crude Arcones-type bound}.

\subsection{Proof of Proposition~\ref{prop: Crude Arcones-type bound}}
\label{section: Arcones}
By Proposition~\ref{prop: divisibility}, we may prove Proposition~\ref{prop: Crude Arcones-type bound} by bounding the second derivative of $P_q$. To achieve this we borrow the main idea from the proof of Arcones' Lemma~\cite{A}*{Lemma 1}.

\begin{proof}[Proof of Proposition~\ref{prop: Crude Arcones-type bound}]
Our goal is to bound $\frac{\partial^2 P_q}{\partial x^2}$. For $k \le 2q-2$, define
$$
\alpha_q(k) = \begin{cases}
0, & \text{for odd } k, \\
\frac{1}{q!} \cdot \binom{q}{k/2} \frac{(2q-k)!k!}{k-1}, & \text{for even }k,
\end{cases}
$$
which yields (recall \eqref{eq : a coeffs})
\begin{equation*}
\alpha_q(2k) = \binom{q}{k} \frac{(2q-2k)!(2k)!}{(2k-1)\cdot q!} = (2q-2k)!(2k)! \cdot a_q(k).
\end{equation*}
Let $0\le k,l \le 2q-2$ and suppose that $n$ is an integer such that $\max(0, l+k-2q+2) \le n \le \min(l,k)$. Recalling \eqref{eq : b coeffs} we have
\begin{align*}
\alpha_q(2k) \alpha_q(2l) &= (2q-2k)! (2k)! (2q-2l)! (2l)! \cdot a_q(k) a_q(l) \\
&= (2q-2k-2l+n)! (2k-n)!(2l-n)! n! \cdot a_q(k) a_q(l) b_q(k,l,n)
\end{align*}
and so
\begin{equation}\label{eq: second derivative}
a_q(k)a_q(l) b_q(k,l,n)  \frac{\partial^2 }{\partial x^2}\left[x^{2(q-k-l)+n}\right]
= \frac{\alpha_q(2k) \alpha_q(2l) x^{2q-2k-2l-2+n}}{(2q-2k-2l-2+n)!(2k-n)!(2l-n)!n!}.
\end{equation}

Let
\begin{equation*}
k_1 = 2q-2-k, \quad k_2 = k,\quad l_1 = 2q-2-l,\quad\text{and}\quad l_2 = l,
\end{equation*}
define
\begin{align*}
\mathcal{A}(k,l) &= \left \{  \left( \begin{array}{cc}
2q-l-k-2+n & l-n \\
k-n & n
\end{array}
\right) :
\max(0, l+k-2q+2) \le n \le \min(l,k)
 \right \} \\
 &= \left \{ a = \left( \begin{array}{cc}
 a_{11} & a_{12} \\
 a_{21} & a_{22}
 \end{array}
  \right) : a_{ij} \in \N_{0}, a_{i1} + a_{i2} = k_{i}, a_{1i} + a_{2i} = l_{i}
  \right \}
\end{align*}
and
\begin{equation*}
  \tilde{\mathcal{A}}(k)=\bigcup_{l=0}^{2q-2} \mathcal{A}(k,l) = \left \{ a = \left( \begin{array}{cc}
 a_{11} & a_{12} \\
 a_{21} & a_{22}
 \end{array}
  \right) : a_{ij} \in \N_{0}, a_{i1} + a_{i2} = k_{i}
  \right \}.
\end{equation*}
Then, using \eqref{eq: second derivative} and recalling \eqref{eq : polys no correction}, we have
\begin{align*}
\frac{\partial^2 \widetilde{P}_q}{\partial x^2} &= \sum_{k,l=0}^{q-1} \alpha_q(2k) \alpha_q(2l) \sum_{n=\max(0,2k+2l-2q+2)}^{\min(2k,2l)}
 \frac{x^{2(q-k-l-1)+n}}{(2q-2k-2l-2+n)!} \frac{y^{2k-n}}{(2k-n)!} \frac{y^{2l-n}}{(2l-n)!} \frac{z^{n}}{n!} \\
 &= \sum_{k,l=0}^{q-1} \alpha_q(2k) \alpha_q(2l) \sum_{A \in \mathcal{A}(2k,2l)} \prod_{i,j=1}^{2} \frac{x_{ij}^{a_{ij}}}{a_{ij}!}\\
 & = \sum_{k,l=0}^{2q-2} \alpha_q(k) \alpha_q(l) \sum_{A \in \mathcal{A}(k,l)} \prod_{i,j=1}^{2} \frac{x_{ij}^{a_{ij}}}{a_{ij}!}
\end{align*}
where
$$
x_{11} = x, \quad x_{12} = x_{21} = y, \quad\text{and}\quad x_{22} = z.
$$

We now bound
\begin{align*}
\left | \frac{\partial^2 \widetilde{P}_q}{\partial x^2} \right | &
\le
 \sum_{k,l=0}^{2q-2} \left | \alpha_q(k) \alpha_q(l) \right | \sum_{a \in \mathcal{A}(k,l)} \prod_{i,j=1}^{2} \frac{|x_{ij}|^{a_{ij}}}{a_{ij}!} \\
 &\le  \sum_{k,l=0}^{2q-2} \left( \frac{\alpha_q(k)^2 + \alpha_q(l)^2}{2} \right) \sum_{a \in \mathcal{A}(k,l)} \prod_{i,j=1}^{2} \frac{|x_{ij}|^{a_{ij}}}{a_{ij}!}
\end{align*}
Algebraic manipulation of this last quantity yields
\begin{equation*}
  \left | \frac{\partial^2 \widetilde{P}_q}{\partial x^2} \right |
\le \sum_{k=0}^{2q-2} \alpha_q(k)^2 \sum_{l=0}^{2q-2} \sum_{a \in \mathcal{A}(k,l)} \prod_{i,j=1}^{2} \frac{|x_{ij}|^{a_{ij}}}{a_{ij}!}
 =  \sum_{k=0}^{2q-2} \alpha_q(k)^2 \sum_{a \in\tilde{\mathcal{A}}(k)} \prod_{i,j=1}^{2} \frac{|x_{ij}|^{a_{ij}}}{a_{ij}!}
\end{equation*}
Applying the Binomial Theorem to the last term gives
\begin{align*}
 \sum_{k=0}^{2q-2} \alpha_q(k)^2 \prod_{i=1}^{2} \frac{(|x_{i1}| + |x_{i2}|)^{k_i}}{k_i!}
 &\le M^{2q-2} \sum_{k=0}^{2q-2} \frac{\alpha_q(k)^2}{k! (2q-2-k)!}\\
 &\le 4q^2 M^{2q-2} \sum_{k=0}^{2q-2} \frac{\alpha_q(k)^2}{k! (2q-k)!}\\
 &= 4q^2 M^{2q-2} \sum_{k=0}^{q-1} \frac{1}{(q!)^2} {\binom{q}{k}}^2 \frac{(2k)!(2q-2k)!}{(2k-1)^2} = 4q^2 c_q M^{2q-2}.
\end{align*}
where the last identity is due to Lemma~\ref{lem: c_q} below, and we remind the reader of \eqref{eq : c coeffs}. We also have, from \eqref{eq : polys with correction}, that
\begin{equation*}
\frac{\partial^2 P_q}{\partial x^2} = \frac{\partial^2 \widetilde{P}_q}{\partial x^2} + (2q-1)(2q-2)c_q \left(x^{2q-3}z + (2q-3)x^{2q-4}y^2 \right).
\end{equation*}
We next bound this final summand. Note that for $q=1$ this term vanishes. Otherwise, on the domain $D_M = \{ |x| + |y| \le M, |y| + |z| \le M \}$, it attains its maximum on the boundary, and a calculation reveals the maximum is attained at $|z| = |x| = M, y = 0$. Therefore
\begin{equation*}
  | x^{2q-3}z + (2q-3)x^{2q-4}y^2| \le M^{2q-2}.
\end{equation*}

Combining these two estimates we obtain
\begin{equation*}
\left | \frac{\partial^2 P_q}{\partial x^2} \right | \le \left( 4q^2 + (2q-1)(2q-2) \right) c_q M^{2q-2}\le 8q^2 c_q M^{2q-2}.
\end{equation*}
Using Sterling's bounds \footnote{The constants here are not asymptotically optimal, but this is irrelevant for our purposes.} we see that $\binom{2q}{q} \ge\frac{2\sqrt\pi}{e^2}\frac{2^{2q}}{\sqrt q}$ which yields
$$
c_q = \frac{2^{4q}}{2q \binom{2q}{q}} \le \frac{e^2}{4\sqrt\pi}\frac{4^{q}}{\sqrt q}
$$
so that
\begin{equation} \label{eq : bound second derivative}
\sup_{D_M} \left | \frac{\partial^2 P_q}{\partial x^2}  \right | \le \frac{2e^2}{\sqrt\pi}q^{3/2}4^{q} M^{2q-2}.
\end{equation}

By the mean value theorem,
\[
P_q(x,y,z) = P_q(-z,y,z) + \frac{\partial P_q}{\partial x}(-z,y,z) (x+z) + \frac{1}{2}  \frac{\partial^2 P_q}{\partial x^2}(t,y,z) (x+z)^2
\]
for some $t$ between $x$ and $-z$.
It follows from Proposition~\ref{prop: divisibility} that $ P_q(-z,y,z) = \frac{\partial P_q}{\partial x}(-z,y,z)=0$, so that
\[
P_q(x,y,z)= \frac{1}{2}  \frac{\partial^2 P_q}{\partial x^2}(t,y,z) (x+z)^2.
\]
Note that $|t| \le \max(|x|, |z|) \le M - |y|$ and so by \eqref{eq : bound second derivative} we have
\begin{equation*}
\frac{|P_q(x,y,z)|}{(x+z)^2} \le \frac{1}{2} \sup_{(t,y,z)\in D_M } \left | \frac{\partial^2 P_q}{\partial x^2}(t,y,z) \right | \le \frac{e^2}{\sqrt\pi} q^{3/2} 4^{q} M^{2q-2}. \qedhere
\end{equation*}
\end{proof}

In the course of the proof we used the following computation.

\begin{lem}
\label {lem: c_q}
For all $q \in \N$ we have
$$
c_q = \sum_{l=0}^{q} \binom{2l}{l} \binom{2q-2l}{q-l} \frac{1}{(2l-1)^2}.
$$
\end{lem}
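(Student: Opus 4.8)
The plan is to recognise the claimed identity as an instance of a known binomial-coefficient identity, and to prove it by a generating-function argument. Recall from \eqref{eq: arcsin taylor} that
\[
\arcsin^2(x) = \frac{1}{2}\sum_{q=1}^{\infty} \frac{2^{2q}}{q^2\binom{2q}{q}} x^{2q},
\]
and observe that $\frac{2^{2q}}{q^2 \binom{2q}{q}}$ differs from $c_q = \frac{2^{4q}}{2q\binom{2q}{q}}$ only by elementary factors. The key observation is that $\frac{d}{dx}\arcsin x = (1-x^2)^{-1/2}$, and that $(1-x^2)^{-1/2} = \sum_{l\ge 0}\binom{2l}{l}(x/2)^{2l} \cdot \text{(adjusted)}$; more precisely, the standard expansion $(1-u)^{-1/2} = \sum_{l\ge0}\binom{2l}{l}4^{-l}u^l$ gives $(1-x^2)^{-1/2} = \sum_{l\ge0}\binom{2l}{l}(x^2/4)^l$. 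Differentiating $\arcsin^2 x$ once gives $\frac{2\arcsin x}{\sqrt{1-x^2}}$, and differentiating again gives a function whose Taylor coefficients involve the convolution $\sum_{l}\binom{2l}{l}\binom{2q-2l}{q-l}4^{-q}\cdot(\text{something})$.

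First I would set up the generating function $g(x) = \sum_{q\ge1} c_q x^{2q}$ and relate it, via \eqref{eq: arcsin taylor} and the identity $c_q = \frac{2^{2q}}{q}\cdot\frac{2^{2q}}{2\binom{2q}{q}}$, to a derivative of $\arcsin^2$. Specifically, since $\frac{2^{2q}}{q^2\binom{2q}{q}}$ are the (rescaled) coefficients of $2\arcsin^2 x$, multiplying by $q$ corresponds to applying $\frac{x}{2}\frac{d}{dx}$, so $\sum_q \frac{2^{2q}}{q\binom{2q}{q}}x^{2q} = x\frac{d}{dx}\arcsin^2 x = \frac{2x\arcsin x}{\sqrt{1-x^2}}$. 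Then I would substitute $x \mapsto x$ appropriately to absorb the extra factor $2^{2q}$ (i.e. a rescaling $x \to x$ with the right constant), landing on the identity that $\sum_q c_q x^{2q}$ equals an explicit elementary function, say $h(x)$. On the other side, I would compute the generating function of the right-hand side: $\sum_{q\ge0}\left(\sum_{l=0}^q \binom{2l}{l}\binom{2q-2l}{q-l}\frac{1}{(2l-1)^2}\right)x^{2q}$ is the Cauchy product of $\sum_{l\ge0}\frac{\binom{2l}{l}}{(2l-1)^2}x^{2l}$ with $\sum_{m\ge0}\binom{2m}{m}x^{2m} = (1-4x^2)^{-1/2}$. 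The first of these series is again classically summable: $\sum_{l\ge0}\frac{\binom{2l}{l}}{(2l-1)^2}u^l$ is related to $\sqrt{1-4u}$ and $\arcsin$-type primitives (it can be obtained by integrating $\sum \frac{\binom{2l}{l}}{2l-1}u^l = -\sqrt{1-4u}$ after dividing by $u$, or similar). Matching the two closed forms proves the lemma.

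The main obstacle I anticipate is bookkeeping: correctly identifying the elementary closed form for $\sum_{l\ge0}\frac{\binom{2l}{l}}{(2l-1)^2}x^{2l}$ and keeping track of the various powers of $2$ and $4$ so that both sides of the generating-function identity genuinely agree rather than differing by a spurious constant. An alternative, possibly cleaner, route avoiding this is to reuse the machinery already in place: note that in the proof of Proposition~\ref{prop: Crude Arcones-type bound} we derived
\[
\sum_{k=0}^{q-1}\frac{1}{(q!)^2}\binom{q}{k}^2\frac{(2k)!(2q-2k)!}{(2k-1)^2} = 4q^2 c_q M^{2q-2}\big/\big(4q^2 M^{2q-2}\big)
\]
was the identity being invoked; rewriting $\frac{1}{(q!)^2}\binom{q}{k}^2 (2k)!(2q-2k)! = \binom{2k}{k}\binom{2q-2k}{q-k}$ shows this is \emph{exactly} the claimed sum (with the harmless inclusion of the $l=q$ term, which vanishes or is accounted for by the range, since $\binom{0}{0}\frac{1}{(2q-1)^2}$ must be checked against the $k$-range). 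So strictly the lemma is the combinatorial core that Proposition~\ref{prop: Crude Arcones-type bound} already reduces to, and I would prove it self-containedly via the generating-function computation above, then remark that it closes the gap in that proof.
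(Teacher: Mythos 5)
Your proposal follows essentially the same route as the paper's proof: write the right-hand side's generating function as the Cauchy product of $\phi(x)=\sum_l \binom{2l}{l}\frac{x^{2l}}{(2l-1)^2}$ with $\psi(x)=(1-4x^2)^{-1/2}$, sum $\phi$ by integrating the classical series $\sum_l\binom{2l}{l}\frac{u^l}{2l-1}=-\sqrt{1-4u}$, and match the product against the $\arcsin^2$ Taylor series \eqref{eq: arcsin taylor} to read off $c_q$. The bookkeeping you defer is exactly what the paper carries out, obtaining $\phi(x)=\sqrt{1-4x^2}+2x\arcsin(2x)$ and $\phi(x)\psi(x)=1+\tfrac{x}{2}\tfrac{d}{dx}\arcsin^2(2x)$.
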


\begin{proof}
For $q \ge 0$, let us denote $T_q =  \sum_{l=0}^{q} \binom{2l}{l} \binom{2q-2l}{q-l} \frac{1}{(2l-1)^2}$. Notice that
\begin{equation}\label{eq: Tq sum}
\sum_{q=0}^{\infty} T_q x^{2q} = \phi(x) \psi(x)
\end{equation}
where
$$
\phi(x) = \sum_{l=0}^{\infty} \binom{2l}{l} \frac{x^{2l}}{(2l-1)^2}, \quad \text{and}\quad
\psi(x) =  \sum_{l=0}^{\infty} \binom{2l}{l} x^{2l} = \frac{1}{\sqrt{1-4x^2}} .
$$

We next compute $\phi$. We have
\begin{equation*}
\frac{d}{dx} \left[ \frac{\phi(x)}{x} \right] = \sum_{l=0}^{\infty} \binom{2l}{l} \frac{x^{2l-2}}{2l-1}
= -\frac{1}{x^2} \sqrt{1-4x^2}= \frac{d}{dx} \left[ \frac{\sqrt{1-4x^2}}{x} + 2\arcsin(2x) \right]
\end{equation*}
and so $\frac{\phi(x)}{x} = \frac{\sqrt{1-4x^2}}{x} + 2\arcsin(2x) + C$ for some constant $C$.
Since all the functions in this equation are odd, it follows that $C = 0$, and so $\phi(x) = \sqrt{1-4x^2} + 2x\arcsin(2x)$.
Therefore, using the Taylor series \eqref{eq: arcsin taylor} once more,
\begin{align*}
\phi(x)\psi(x) &= 1 + \frac{2x \arcsin(2x)}{\sqrt{1-4x^2}}\\
& = 1 + \frac{x}{2} \frac{d}{dx} \left( \arcsin(2x) \right)^2\\
&= 1 + \frac{x}{2} \frac{d}{dx} \sum_{q=1}^{\infty} \frac{(4x)^{2q}}{2q^2 \binom{2q}{q}} =
1 + \sum_{q=1}^{\infty} \frac{4^{2q} x^{2q}}{2q \binom{2q}{q}}.
\end{align*}
Comparing this with \eqref{eq: Tq sum} we conclude that $T_q = \frac{2^{4q}}{2q \binom{2q}{q}}=c_q$ for $q \ge 1$.
\end{proof}

\subsection{Proof of Theorem~\ref{thm: UB}}\label{section: proof of UB}
Having Proposition~\ref{prop: Crude Arcones-type bound} at our disposal, we are ready to prove Theorem~\ref{thm: UB}. Let
\begin{equation*}
  M' = \limsup_{|t| \rightarrow \infty} \varphi(t)<1
\end{equation*}
and choose $M \in \left( M', 1 \right)$. Then there exists some $T_0 > 0$ such that
$ \varphi(t) \le M $ for all $|t| > T_0$.
We can rearrange \eqref{eq : V_q(T)} to obtain
\begin{align}
\label{eq: V_q rearranged}
V_q(T)\nonumber =  V_q(T_0) &+  2\left( T - T_0 \right) \int_{0}^{T_0} P_q \left(r(t), \frac{r'(t)}{\sigma}, \frac{r''(t)}{\sigma^2} \right) \ dt \\
&\quad+ 2\int_{T_0}^T \left( T - t \right) P_q \left(  r(t), \frac{r'(t)}{\sigma}, \frac{r''(t)}{\sigma^2} \right) \ dt.
\end{align}
Proposition~\ref{prop: Crude Arcones-type bound} yields
\begin{align}
\nonumber
\left|\int_{T_0}^{T}  \left( T - t \right) P_q \left(  r(t), \frac{r'(t)}{\sigma}, \frac{r''(t)}{\sigma^2} \right) \ dt\right|
& \le \frac{e^2}{\sqrt{\pi }} q^{3/2} 4^{q} M^{2q-2} \int_{T_0}^{T} \left( T - t \right) \left( r(t) + \frac{r''(t)}{\sigma^2}  \right)^2  \ dt \\
\nonumber
& \le  \frac{e^2}{\sqrt{\pi }} q^{3/2} 4^{q} M^{2q-2} \int_{0}^{T} \left( T - t \right) \left( r(t) + \frac{r''(t)}{\sigma^2}  \right)^2  \ dt \\
\label{eq : tail upper bound single q}
& =  \frac{e^2}{\sqrt{\pi }} q^{3/2} 4^{q-1} M^{2q-2} V_1(T),
\end{align}
see \eqref{eq: V_1}. Since $M<1$ we see that
\begin{equation*}
  \sum_{q=1}^\infty\frac1{4^q}\left|\int_{T_0}^{T}  \left( T - t \right) P_q \left(  r(t), \frac{r'(t)}{\sigma}, \frac{r''(t)}{\sigma^2} \right) \ dt\right|<\infty.
\end{equation*}
By Proposition~\ref{prop: main}, since we are assuming the Geman condition, we have $\sum_{q=1}^{\infty} \frac{V_q(T)}{4^q} < \infty$ for every $T>0$ and so we may write, from \eqref{eq: V_q rearranged}
\begin{align*}
\sum_{q=1}^{\infty} \frac{V_q(T)}{4^q} &= \sum_{q=1}^{\infty} \frac{V_q(T_0)}{4^q} + \left( T - T_0 \right)  \sum_{q=1}^{\infty} \frac{1}{4^q} \int_{0}^{T_0} P_q \left(  r(t), \frac{r'(t)}{\sigma}, \frac{r''(t)}{\sigma^2} \right) \ dt \\
&\quad + \sum_{q=1}^{\infty} \frac{1}{4^q} \int_{T_0}^{T} \left( T - t \right) P_q \left(  r(t), \frac{r'(t)}{\sigma}, \frac{r''(t)}{\sigma^2} \right) \ dt.
\end{align*}
Combining this with \eqref{eq : tail upper bound single q} we get
\begin{equation*}
  \sum_{q=1}^{\infty} \frac{V_q(T)}{4^q}\le C_0 + C_1 T + C_2 V_1(T)
\end{equation*}
where $C_0,C_1$ and $C_2$ depend on $T_0$ and $M$. Recalling Proposition~\ref{prop: main} we have
\begin{equation*}
  \var [N(T)] \le \frac{\sigma^2}{\pi^2} \sum_{q=1}^{\infty} \frac{V_q(T)}{4^q} + \frac14 \le C_3 V_1(T)
\end{equation*}
where we have used the lower bound proved in Section~\ref{sec: LB} for the final bound.

\subsection{Proof of Theorem~\ref{thm : asymp constant}}
By \eqref{eq: V_1} we need to show that $\var [N(T)] \sim \frac{\sigma^2}{4\pi^2} V_1(T)$. The lower bound follows immediately from Proposition~\ref{prop: LB} and so we focus on the upper bound. We proceed as in the previous section, but estimate more carefully. By Proposition~\ref{prop: main} we have
\begin{equation*}
  \var [N(T)] \le \frac{\sigma^2}{4\pi^2} V_1(T) + \frac{\sigma^2}{\pi^2} \sum_{q=2}^{\infty} \frac{V_q(T)}{4^q} +\frac14.
\end{equation*}
Now fix $\ep>0$ and choose $T_0 = T_0(\ep)$ such that $\varphi(t) < \ep$ for all $t>T_0$. As in the previous section we write
\begin{align*}
\sum_{q=2}^{\infty} \frac{V_q(T)}{4^q} &= \sum_{q=2}^{\infty} \frac{V_q(T_0)}{4^q} + \left( T - T_0 \right)  \sum_{q=2}^{\infty} \frac{1}{4^q} \int_{0}^{T_0} P_q \left(  r(t), \frac{r'(t)}{\sigma}, \frac{r''(t)}{\sigma^2} \right) \ dt \\
&\quad + \sum_{q=2}^{\infty} \frac{1}{4^q} \int_{T_0}^{T} \left( T - t \right) P_q \left(  r(t), \frac{r'(t)}{\sigma}, \frac{r''(t)}{\sigma^2} \right) \ dt
\end{align*}
and estimate
\begin{equation*}
  \left|\int_{T_0}^{T}  \left( T - t \right) P_q \left(  r(t), \frac{r'(t)}{\sigma}, \frac{r''(t)}{\sigma^2} \right) \ dt\right| \le  \frac{e^2}{2\sqrt{\pi }} q^{3/2} 4^{q} \ep^{2q-2} V_1(T).
\end{equation*}
This yields
\begin{equation*}
  \sum_{q=2}^{\infty} \frac{V_q(T)}{4^q} =C_0 + C_1 T + \frac{2e^2}{\sqrt{\pi }} \sum_{q=2}^{\infty} q^{3/2} (4\ep^2)^{q-1} V_1(T)\le C_0 + C_1 T + C_3 \ep^2 V_1(T)
\end{equation*}
and we finally note that since $r+\frac{r''}{\si^2} \not\in \mathcal{L}^2(\R)$ we have
\begin{equation*}
  \frac{V_1(T)}T\to\infty
\end{equation*}
as $T\to\infty$. This completes the proof.

\subsection{Conjectural Bounds}\label{section: conjectures}
In this section we give some evidence in favor of the conjectures stated in the Introduction.
The precise expression for the variance appearing in Proposition~\ref{prop: main} establishes a way to prove even tighter upper bounds, by reducing to combinatorial statements about the polynomials $P_q$, defined in \eqref{eq : polys with correction}.  It is not difficult to see that the vector $(r(t),r'(t)/\si,r''(t)/\si^2)$ always lies in the domain
$$
D = \{(x,y,z) \in \R^3 : x^2 + y^2 \le 1, y^2 + z^2 \le 1 \}.
$$
By Proposition~\ref{prop: divisibility}, $R_q(x,y,z) = P_q(x,y,z)/(x+z)^2$ is a homogeneous polynomial and since $D$ contains all segments to the origin, it follows that $R_q$ attains the maximum of its absolute value on the boundary. We expect that the maximum should be obtained at the points where $|x|=|z|$.

When $x = -z$, the same techniques employed in this paper show the value to be
$$
\frac{P_q(x,y,z)}{(x+z)^2} \Big \vert_{z=-x} = 2^{2q-1} (x^2 + y^2)^{q-1}
$$
and so on this boundary component the value of $R_q$ is $2^{2q-1}$. We believe that this bound is the one relevant to Gaussian processes, however numerical computations suggest that $R_q$ can be much larger at the points where $x=z$. We believe that there is some `hidden' structure that prevents $r(t)$ from being close to $r''(t)/\si^2$ in certain subregions of $D$. For example, if $r(t)$ is close to $1$ then we should be close to a local maximum and so we would expect $r''(t)$ to be negative. Understanding the `true domain' where the vector $(r(t),r'(t)/\si,r''(t)/\si^2)$ `lives' already appears to be a quite interesting question.

\section{Singular spectral measure}
\subsection{Atoms in the spectral measure: proofs of Theorem~\ref{thm: extremal}~\ref{part: quad} and Corollary~\ref{cor: special atom}}
In this section we consider the effect of atoms in the spectral measure, that is, we prove Theorem~\ref{thm: extremal}~\ref{part: quad} and Corollary~\ref{cor: special atom}. Our proof relies on the following proposition.
\begin{prop}\label{prop: quad}
Let $\mu$ be a signed-measure with $\int_\R d|\mu|<\infty$. Then $\mu$ contains an atom if and only if there exists $c>0$ such that
\[
\int_{-T}^T (T-|t|) \, |\widehat{\mu}(t)|^2 dt \ge c T^2
\]
for all $T>0$.
\end{prop}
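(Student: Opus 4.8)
\textbf{Strategy.} The plan is to recognise the quantity $\int_{-T}^T (T-|t|)|\widehat\mu(t)|^2\,dt$ as (up to a constant) the integral of a Fejér kernel against the "autocorrelation measure" of $\mu$, and then read off the statement from elementary properties of weak limits of Fejér means. Concretely, set $\nu = \mu * \widetilde\mu$ where $\widetilde\mu(E) = \overline{\mu(-E)}$, so that $\widehat\nu(t) = |\widehat\mu(t)|^2 \ge 0$; since $\mu$ is a finite signed (or complex) measure, $\nu$ is a finite measure and $\widehat\nu$ is a bounded continuous positive-definite function. By Parseval/Fubini (as already invoked for \eqref{eq: parseval for key int}),
\[
\int_{-T}^T (T-|t|)\,\widehat\nu(t)\,dt \;=\; 2\pi \int_\R K_T(\lambda)\,d\nu(\lambda),
\]
where $K_T(\lambda) = \frac{T}{2\pi}\sinc^2(T\lambda/2) = 2\pi\,\mathcal S_T(\lambda)$ is the Fejér kernel, which is nonnegative, has total mass $1$, and concentrates at the origin as $T\to\infty$.

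\textbf{The easy direction ($\Rightarrow$).} Suppose $\mu$ has an atom, say $\mu(\{\alpha\}) = a \ne 0$ for some $\alpha\in\R$. Then $\nu$ has an atom at $0$ of mass $|a|^2$: indeed the convolution of $a\delta_\alpha$ with $\overline a\,\delta_{-\alpha}$ contributes $|a|^2\delta_0$, and one checks the cross terms cannot cancel this (e.g. $\nu(\{0\}) = \sum_\beta |\mu(\{\beta\})|^2 \ge |a|^2 > 0$ by a standard computation of the atomic part of a convolution). Since $K_T(0) = T/(2\pi)$, we get
\[
\int_{-T}^T (T-|t|)\,|\widehat\mu(t)|^2\,dt \;=\; 2\pi\int_\R K_T\,d\nu \;\ge\; 2\pi\,K_T(0)\,\nu(\{0\}) \;\ge\; |a|^2\,T \cdot \ ?
\]
wait — this only gives a linear bound. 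To get the quadratic bound I instead argue directly with the atom: write $\mu = a\delta_\alpha + \mu'$; then $\widehat\mu(t) = a e^{-i\alpha t} + \widehat{\mu'}(t)$, and expanding $|\widehat\mu|^2$ the cross terms are handled by noting $\widehat{\mu'}$ has no atom at frequency $\alpha$ in the sense that $\frac1T\int_{-T}^T e^{i\alpha t}\widehat{\mu'}(t)\,dt \to \mu'(\{\alpha\}) = 0$ (this is the classical fact that Cesàro means of $\widehat{\mu'}$ at a point recover the mass there). Hence $\frac1{T^2}\int_{-T}^T(T-|t|)|\widehat\mu(t)|^2\,dt \to \tfrac12|a|^2 > 0$, giving the desired $cT^2$ lower bound for $T$ large, and then for all $T>0$ after possibly shrinking $c$ (the integrand is continuous and strictly positive near $T=0$ unless $\mu = 0$, which is excluded since $\mu$ has an atom).

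\textbf{The harder direction ($\Leftarrow$), the main obstacle.} Suppose $\mu$ has no atom; I must show $\frac1{T^2}\int_{-T}^T(T-|t|)|\widehat\mu(t)|^2\,dt \to 0$, i.e. $\int_\R K_T\,d\nu \to 0$. Equivalently $\int_\R K_T\,d\nu \to \nu(\{0\})$, and $\nu(\{0\}) = \sum_{\beta}|\mu(\{\beta\})|^2 = 0$ precisely when $\mu$ is atomless. So the crux is the classical "Wiener lemma" type statement: for a finite positive measure $\nu$ on $\R$,
\[
\lim_{T\to\infty}\frac1{2T}\int_{-T}^T |\widehat\nu(t)|\,dt \quad\text{or here}\quad \lim_{T\to\infty}\int_\R K_T\,d\nu \;=\; \sum_{\text{atoms }\beta}\nu(\{\beta\}),
\]
restricted to the atom at $0$. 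I would prove the needed inequality $\limsup_T \int K_T\,d\nu \le \nu(\{0\})$ by splitting $\R = (-\delta,\delta) \cup (\R\setminus(-\delta,\delta))$: on the complement, $K_T \le \frac{2\pi}{T\delta^2}\to 0$, so that part vanishes; on $(-\delta,\delta)$, bound $\int K_T\,d\nu \le \nu((-\delta,\delta))$ using $K_T\ge 0$ with total mass $\le 1$, then let $\delta\downarrow 0$ using $\nu((-\delta,\delta)) \downarrow \nu(\{0\}) = 0$. Combined with the matching $\liminf \ge \nu(\{0\}) = 0$ (trivial), this gives the limit $0$, hence no $cT^2$ lower bound can hold. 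The one genuine technical point to be careful about is the identity $\nu(\{0\}) = \sum_\beta |\mu(\{\beta\})|^2$ — i.e. that $\nu = \mu*\widetilde\mu$ is atomless iff $\mu$ is atomless — which follows from the fact that the atomic part of a convolution of finite measures is the convolution of their atomic parts (a countable sum, absolutely convergent since $\sum|\mu(\{\beta\})| \le \|\mu\| < \infty$), with the contribution to $\{0\}$ being exactly $\sum_\beta \mu(\{\beta\})\overline{\mu(\{\beta\})}$.
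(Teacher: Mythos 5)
Your forward direction is essentially the paper's argument: decompose $\mu = a\delta_\alpha + \mu'$, keep the $|a|^2$ term, and kill the cross term using the fact that the Fej\'er mean of $\widehat{\mu'}$ at frequency $\alpha$ recovers $\mu'(\{\alpha\})=0$; the paper implements this by an explicit splitting at scale $\frac{\log T}{T}$, while you invoke the classical fact, which indeed follows in one line from dominated convergence since $\frac1{T^2}\int_{-T}^T(T-|t|)e^{i\alpha t}\widehat{\mu'}(t)\,dt=\int\sinc^2\big(\tfrac T2(\lambda-\alpha)\big)\,d\mu'(\lambda)$ and $\sinc^2\le1$. Your converse, however, is genuinely different from (and cleaner than) the paper's: the paper shows $\sup_\lambda|(\mathcal{S}_T*\mu)(\lambda)|=o(T)$, which forces them to prove the \emph{uniform} statement $\sup_x|\mu|([x-\ep,x+\ep])\to0$ for atomless $\mu$ (Claim~\ref{clm: eugene}, a dyadic compactness argument). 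By passing to the autocorrelation measure $\nu=\mu*\widetilde\mu$ you need only continuity of $|\nu|$ from above at the single point $0$ together with the standard computation $\nu(\{0\})=\sum_\beta|\mu(\{\beta\})|^2$, and Claim~\ref{clm: eugene} is not needed at all. Indeed your route yields the sharper statement $\frac1{T^2}\int_{-T}^T(T-|t|)|\widehat\mu(t)|^2\,dt\to\sum_\beta|\mu(\{\beta\})|^2$, which contains both implications at once.

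Three repairs are needed. First, $\nu=\mu*\widetilde\mu$ is in general a \emph{signed} measure, not a positive one (take $\mu=\delta_0-\delta_1$, so that $\nu=2\delta_0-\delta_{1}-\delta_{-1}$), so your bound on $(-\delta,\delta)$ must be stated with $|\nu|$; this is harmless because $|\nu|((-\delta,\delta))\downarrow|\nu|(\{0\})=|\nu(\{0\})|=\sum_\beta|\mu(\{\beta\})|^2$. Second, the justification ``$K_T\ge0$ with total mass $\le1$'' for the inequality $\int_{(-\delta,\delta)}K_T\,d\nu\le\nu((-\delta,\delta))$ is wrong: the unit mass of $K_T$ is with respect to Lebesgue measure and gives no control when integrating against $\nu$, whose relevant bound would be $\|K_T\|_\infty=K_T(0)=\tfrac T{2\pi}\to\infty$. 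What saves the argument is that the quantity you actually need to control is the $T^{-2}$-normalised one, $\frac1{T^2}\int_{-T}^T(T-|t|)\widehat\nu(t)\,dt=\int\sinc^2(T\lambda/2)\,d\nu(\lambda)$, where the pointwise bound $\sinc^2\le1$ does give $|\nu|((-\delta,\delta))$ on the near-origin piece (your displayed Parseval identity is off by a factor of $T$, which is presumably the source of the confusion). Third, your patch for small $T$ at the end of the forward direction fails: near $t=0$ the integrand is close to $|\widehat\mu(0)|^2=|\mu(\R)|^2$, which vanishes whenever $\mu(\R)=0$ --- exactly the case in the paper's application, where $d\mu=(1-\lambda^2/\sigma^2)\,d\rho$ integrates to zero --- so the bound $\ge cT^2$ cannot hold for all small $T$. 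The paper's own proof likewise only establishes the bound for $T\ge T_0$ (which is all that is used), so this is really a defect in the statement, but your proposed fix is not correct as written.
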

We postpone the proof of Proposition~\ref{prop: quad} to Section~\ref{sec: quad}. We will also need the following result.
\begin{lem}\label{lem: mean int cosine}
Let $f$ be a SGP with covariance kernel $r$, spectral measure $\rho$ and suppose that $\rho$ has a continuous component. Let $\psi(t) = A \cos(\sigma t + \alpha)$, where $A \in \R$, $\alpha\in[0,2\pi]$ and $\sigma^2 = -r''(0)$. Denote by $N_J(\psi) = \#\{ t \in [0,\pi J/\si] : f(t) = \psi(t) \}$ the number of crossings of the curve $\psi$ by the process. Then $\E [N_J(\psi)] = J$.
\end{lem}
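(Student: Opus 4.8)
The plan is to apply the Kac--Rice formula for the expected number of crossings of a fixed curve by a Gaussian process. Set $g(t) = f(t) - \psi(t)$; then $N_J(\psi)$ counts the zeroes of $g$ in $[0, \pi J/\si]$. The process $g$ is Gaussian but \emph{not} stationary, since $\psi$ is a deterministic function; however, for each fixed $t$ the random vector $(g(t), g'(t)) = (f(t) - \psi(t), f'(t) - \psi'(t))$ is Gaussian with the same covariance matrix as $(f(t), f'(t))$, merely shifted in mean by $(-\psi(t), -\psi'(t))$. Recall $\E[f(t)^2] = r(0) = 1$, $\E[f(t)f'(t)] = -r'(0) = 0$, and $\E[f'(t)^2] = -r''(0) = \sigma^2$, so $f(t)$ and $f'(t)$ are independent with these variances. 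The Kac--Rice formula for crossings of a smooth curve (valid here because $\rho$ has a continuous component, so the Geman condition and the non-degeneracy needed for Kac--Rice hold) gives
\[
\E[N_J(\psi)] = \int_0^{\pi J/\si} \E\big[\,|g'(t)| \,\big|\, g(t) = 0\,\big] \, p_{g(t)}(0) \, dt,
\]
where $p_{g(t)}$ is the density of $g(t)$.

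First I would compute the integrand explicitly. Since $f(t)$ and $f'(t)$ are independent, so are $g(t)$ and $g'(t)$; hence the conditioning is vacuous and
\[
\E\big[\,|g'(t)|\,\big|\, g(t)=0\,\big]\, p_{g(t)}(0)
= \E\big[\,|f'(t) - \psi'(t)|\,\big] \cdot \frac{1}{\sqrt{2\pi}} e^{-\psi(t)^2/2}.
\]
Writing $\psi'(t) = -A\si\sin(\si t + \al) =: m_1$ and $\psi(t) =: m_0$, with $f'(t) \sim \cN(0,\si^2)$, the first factor is $\si\, \E|Z - m_1/\si|$ for a standard normal $Z$, which equals $\si\sqrt{2/\pi}\, e^{-m_1^2/(2\si^2)} + m_1\big(1 - 2\Phi(-m_1/\si)\big)$ by the standard formula for the mean absolute value of a shifted Gaussian. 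So the integrand is a fully explicit (if unwieldy) function of $t$.

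The main obstacle — and the crux of the lemma — is showing that the integral of this explicit expression over one period equals $\si/\pi$ (so that over $J$ periods, i.e.\ the interval $[0, \pi J/\si]$, it gives exactly $J$). Rather than integrate the messy closed form directly, I would argue more cleverly: the key observation is that the joint law of $(g(t), g'(t))$ at a point $t$ depends on $t$ only through $(\psi(t), \psi'(t)) = \big(A\cos(\si t + \al),\, -A\si\sin(\si t + \al)\big)$, which traces out an ellipse parametrised at constant angular speed $\si$. Therefore the crossing intensity $\lambda(t) := \E|g'(t) - g(t)\text{-part}|\cdot p_{g(t)}(0)$ is a function of $\si t + \al \bmod 2\pi$ alone, and $\int_0^{\pi J/\si}\lambda(t)\,dt = \frac{J}{2\pi}\int_0^{2\pi}\Lambda(\theta)\,d\theta$ where $\Lambda$ is the intensity as a function of the phase $\theta$. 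It then remains to show $\frac{1}{2\pi}\int_0^{2\pi}\Lambda(\theta)\,d\theta = \si/\pi$. For this I would compare with the degenerate case: if $f$ were replaced by the pure cosine $\sqrt{X}\cos(\si t + \Phi)$ of the same frequency, the analogous quantity would have to equal $\si/\pi$ as well (one can check the pure-cosine crossing count of another cosine of the same frequency directly — generically there are exactly $J$ crossings in $J$ periods), and the averaged-over-phase intensity is insensitive to which spectral measure of the given $\si^2$ we use, since averaging $e^{-\psi(t)^2/2}$-type terms over the phase $\theta$ produces Bessel-function integrals that depend only on $A$ and $\si$, not on the rest of $\rho$. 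Alternatively, and perhaps cleanest, one can verify $\int_0^{2\pi}\Lambda(\theta)\,d\theta = 2\si$ by a direct computation using $\int_0^{2\pi} e^{-(a\cos\theta)^2/2}\big(c\, e^{-(b\sin\theta)^2/2} + \text{error terms}\big)d\theta$ and recognising the resulting modified-Bessel expressions; I expect the error terms coming from $m_1(1 - 2\Phi(-m_1/\si))$ to integrate to zero over a full period by an odd-symmetry/integration-by-parts argument, leaving exactly the Bessel part, which evaluates to the claimed constant. Checking that $\E[N_J(\psi)]=J$ on the nose (not just asymptotically) is the delicate point and will require care with these special-function identities.
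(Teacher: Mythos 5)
Your setup is the same as the paper's: both apply the (generalised) Kac--Rice formula to $g=f-\psi$, use the independence of $f(t)$ and $f'(t)$, and arrive at the same explicit intensity
\[
\lambda(t)=\frac{e^{-\psi(t)^2/2}}{\sqrt{2\pi}}\left[\si\sqrt{\tfrac{2}{\pi}}\,e^{-\psi'(t)^2/(2\si^2)}+\psi'(t)\bigl(1-2\Phi(-\psi'(t)/\si)\bigr)\right],
\]
and both reduce to integrating this over one period. The gap is in the evaluation of that integral, which is the actual content of the lemma and which you do not carry out. Your stated expectation --- that the term $m_1\bigl(1-2\Phi(-m_1/\si)\bigr)$ integrates to zero over a full period by odd symmetry, leaving only the ``Bessel part'' --- is false. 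That term is an \emph{even}, nonnegative function of $m_1$ (since $1-2\Phi(-m/\si)$ is odd in $m$), so no odd-symmetry cancellation is available, and it is strictly positive away from $m_1=0$. Moreover the ``Bessel part'' is not a Bessel integral at all: since $\psi(t)^2/2+\psi'(t)^2/(2\si^2)=A^2/2$ is constant along the ellipse, the first term integrates to exactly $J e^{-A^2/2}$, which is strictly less than $J$ for $A\neq 0$. The $\Phi$-term must therefore contribute precisely $J\bigl(1-e^{-A^2/2}\bigr)$, and proving this is the delicate step. The paper does it by substituting $u=|A|\cos(\si y+\al)$, writing $2\Phi(-\sqrt{A^2-u^2})-1$ as a Gaussian integral in a second variable $v$, and evaluating the resulting double integral over the quarter-disc $\{u^2+v^2\le A^2,\ u,v\ge0\}$ in polar coordinates. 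Without some such computation your argument yields only $\E[N_J(\psi)]\ge Je^{-A^2/2}$, not the exact identity. (There is also a bookkeeping slip: the change of variables gives $\int_0^{\pi J/\si}\lambda\,dt=\frac{J}{2\si}\int_0^{2\pi}\Lambda\,d\theta$, not $\frac{J}{2\pi}\int_0^{2\pi}\Lambda\,d\theta$, though your final target $\int_0^{2\pi}\Lambda\,d\theta=2\si$ is the correct one.)

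Your alternative ``comparison with the degenerate case'' can in fact be made into a clean proof, but not for the reason you give. The correct observation is that $\lambda(t)$ depends on the process only through the joint law of $(f(t),f'(t))$, i.e.\ only through $r(0)=1$, $r'(0)=0$, $r''(0)=-\si^2$; no averaging over the phase and no special-function identities are needed for this. These data are shared with the degenerate process $\sqrt{X}\cos(\si t+\Phi)$, for which $f-\psi$ is a.s.\ a nonzero sinusoid of frequency $\si$ and hence has exactly one zero per half-period, giving $N_J(\psi)=J$ almost surely. Provided one justifies the validity of Kac--Rice for that degenerate crossing problem (which is easy, as $(f(t),f'(t))$ is still a nondegenerate Gaussian vector and all crossings are a.s.\ transversal), the identity $\int_0^{\pi J/\si}\lambda(t)\,dt=J$ follows and transfers verbatim to the original process. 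This would be a genuinely different, computation-free route; as written, however, your justification via ``Bessel-function integrals that depend only on $A$ and $\si$'' does not establish it.
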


\begin{proof}
Denote the Gaussian density function by $\varphi$ and by $\Phi$ the corresponding distribution function. The generalised Rice formula \cite{CL}*{Equation 13.2.1} gives
\begin{align*}
\E N_J(\psi) &= \sigma \int_{0}^{\frac{\pi J}\si} \varphi(\psi(y)) \left [ 2 \varphi \left( \frac{\psi'(y)}{\sigma} \right) + \frac{\psi'(y)}{\sigma}
 \left( 2 \Phi \left( \frac{\psi'(y)}{\sigma}\right) - 1 \right) \right ]dy \\
 &= \sigma \int_{0}^{\frac{\pi J}\si} \frac{e^{-\frac{A^2}{2} \cos^2 (\sigma y+ \alpha)}}{\sqrt{2 \pi}}
 \left [ \frac{2e^{-\frac{A^2}{2} \sin^2 (\sigma y + \alpha)}}{\sqrt{2 \pi}}  - A \sin (\sigma y+ \alpha)
 \left( 2 \Phi \left( -A \sin (\sigma y+ \alpha) \right) - 1 \right) \right ]dy \\
 &= J e^{-\frac{A^2}{2}}
 -  \frac{\sigma}{\sqrt{2\pi}} \int_{0}^{\frac{\pi J}\si} e^{-\frac{A^2}{2} \cos^2 (\sigma y+ \alpha)} A \sin (\sigma y+ \alpha)  \left( 2 \Phi \left( -A \sin (\sigma y+ \alpha) \right) - 1 \right) dy\\
 &= J e^{-\frac{A^2}{2}}
 -  \frac{\sigma}{\sqrt{2\pi}} \int_{0}^{\frac{\pi J}\si} e^{-\frac{A^2}{2} \cos^2 (\sigma y+ \alpha)} |A| \sin (\sigma y+ \alpha)  \left( 2 \Phi \left( -|A| \sin (\sigma y+ \alpha) \right) - 1 \right) dy.
\end{align*}
Write
\begin{equation*}
F(y) = e^{-\frac{A^2}{2} \cos^2 (y)} |A| \sin (y)  \left( 2 \Phi \left( -|A| \sin (y) \right) - 1 \right)
\end{equation*}
and notice that $F$ is periodic with period $\pi$. This yields
\begin{equation}\label{eq: periodic}
  \E N_J(\psi) = J\left(e^{-\frac{A^2}{2}} -  \frac{\sigma}{\sqrt{2\pi}} \int_{0}^{\frac\pi\si} F(\si y + \alpha) dy \right)= J \left(e^{-\frac{A^2}{2}} -  \frac{1}{\sqrt{2\pi}} \int_{0}^{\pi} F( y ) dy \right).
\end{equation}
Moreover, since $F$ is even we have
\[
\int_{0}^{\pi} F(y) \ dy =
\int_{0}^{\frac{\pi}{2}} F(y) \ dy
+ \int_{\frac{\pi}{2 }}^{\pi} F(y) \ dy =
\int_{0}^{\frac{\pi}{2 }} F(y) \ dy +
\int_{-\frac{\pi}{2 }}^{0} F(y) \ dy =
2 \int_{0}^{\frac{\pi}{2 }} F(y) \ dy.
\]
Substituting $u = |A| \cos(y)$ we obtain
\begin{align*}
-  \frac{1}{\sqrt{2\pi}} \int_{0}^{\pi} F(y) dy & = - {\sqrt{\frac2 \pi}} \int_{0}^{\pi/2} F(y) dy\\
&= -{\sqrt{\frac2 \pi}} \int_{0}^{|A|}  e^{-\frac{u^2}{2}} \cdot \left (2 \Phi \left(-\sqrt{A^2 - u^2} \right) - 1 \right) du \\
&=  \frac{2}{ \pi}  \int_{0}^{|A|}  \int_{0}^{\sqrt{A^2 - u^2}} e^{-\frac{u^2+v^2}{2}} \ dv \ du \\
&=  \frac{2}{ \pi} \int_{0}^{|A|} \int_{0}^{\pi/2} e^{-\frac{r^2}{2}} r d\theta dr
=   1 - e^{-\frac{A^2}{2}}.
\end{align*}
Inserting this value into \eqref{eq: periodic} yields the result.
\end{proof}

\begin{proof}[Proof of Theorem~\ref{thm: extremal}~\ref{part: quad}]
First we note that, by stationarity, $\var[N(T)]\le C T^2$ for some $C>0$. Assume that $\rho$ has an atom at a point different from $\si$. By \eqref{eq: Lb for N} and \eqref{eq: V_1}, to show that $\var[N(T)]\ge \frac{c\si^2}{2\pi^2}T^2$ for some $c>0$ it is enough to see that
\begin{equation*}
  \int_{-T}^T \left(T-|t|\right) \left(r(t) + \frac{r''(t)}{\si^2}\right)^2 dt \ge c T^2.
\end{equation*}
But this follows from Proposition~\ref{prop: quad} if we define the signed measure $\mu$ by $d\mu(\lm) = (1-\frac{\lm^2}{\si^2})d\rho(\lm)$ and notice that $\hat{\mu}=r+\frac{r''}{\si^2}$ and that $\mu$ has an atom.

For the converse, notice that it is enough to check that for integer $J$ we have
\begin{equation*}
  \frac{\var[N(\frac\pi\si J)]}{J^2}\to 0 \quad\text{ as }\quad J\to\infty,
\end{equation*}
since this implies that $\var[N(T)]=o(T^2)$, by stationarity. Assume first that $\rho$ has no atoms; we adapt the proof of \cite{BF}*{Thm 4}.
By the Fomin-Grenander-Maruyama theorem, $f$ is an ergodic process (see, e.g., \cite{Gre}*{Sec. 5.10}). By standard arguments, this also implies that the sequence
\begin{equation*}
  \cN_j=\# \left\{ t\in \Big[(j-1)\frac\pi\si,j\frac\pi\si\Big): \: f(t)=0\right\}.
\end{equation*}
is ergodic. Recall that we assume the Geman condition, which implies that the first and second moments of
\begin{equation*}
  N\left(\frac\pi\si J\right)=\sum_{j=1}^J \cN_j
\end{equation*}
are finite. Thus, by von Neumann's ergodic theorem, we have
\begin{equation*}
\lim_{J\to\infty}\frac {N(\frac\pi\si J)} J  = \E[\cN_1]=1,
\end{equation*}
where the convergence is both in $L^1$ and $L^2$ (see~\cite{Walters}*{Cor. 1.14.1}). We conclude that
\begin{equation*}
  \lim_{J\to\infty}\frac{\var[N(\frac\pi\si J)]}{J^2}  =0.
\end{equation*}

Finally suppose that $\rho = \theta \rho_c + (1-\theta) \delta^*_{\si}$ where $0<\theta<1$ and $\rho_c$ has no atoms. We may represent the corresponding process as
\begin{equation*}
  f(t)=\sqrt{\theta}f_c + \sqrt{(1-\theta)X}\cos(\si t + \Phi)
\end{equation*}
where $f_c$ is a SGP with spectral measure $\rho_c$, $X\sim \chi^2(2)$, $\Phi\sim\text{Unif}([0,2\pi])$, and moreover $f_c,X$ and $\Phi$ are pairwise independent. By the law of total variance and Lemma~\ref{lem: mean int cosine} we have
\begin{align}
\nonumber
  \var\Big[N\Big(\frac\pi\si J\Big)\Big]&=\E\Big[\var\big[N\Big(\frac\pi\si J\Big)\Big|X,\Phi\big]\Big]+\var\Big[\E\big[N\Big(\frac\pi\si J\Big)\Big|X,\Phi\big]\Big] \\
  &= \E\Big[\var\big[N\Big(\frac\pi\si J\Big)\Big|X,\Phi\big]\Big].
  \label{eq : total variance}
\end{align}
We define, for $A\in\R$ and $\alpha\in[0,2\pi]$,
\begin{equation*}
  \cN^{A,\alpha}_j=\# \left\{ t\in \Big[(j-1)\frac\pi\si,j\frac\pi\si\Big): \: f_c(t)=A\cos(\si t+\alpha)\right\}.
\end{equation*}
As before the process $f_c$ is ergodic, and so is the sequence $\cN^{A,\alpha}_j$ for fixed $A$ and $\alpha$. This implies that
\begin{equation*}
  \lim_{J\to\infty}\frac{\var[N(\frac\pi\si J)|X,\Phi]}{J^2}  =0
\end{equation*}
(almost surely), exactly as before. Furthermore, using stationarity we have
\begin{equation*}
  \frac{1}{J^2}\var\big[N\Big(\frac\pi\si J\Big)\Big|X,\Phi\big]\le \var\big[N\Big(\frac\pi\si\Big)\Big|X,\Phi\big]
\end{equation*}
and using \eqref{eq : total variance} we see that
\begin{equation*}
  \E\Big[\var\big[N\Big(\frac\pi\si\Big)\Big|X,\Phi\big]\Big]=\var\big[N\Big(\frac\pi\si\Big)\big]<+\infty,
\end{equation*}
since we assume the Geman condition. It follows from dominated convergence that
\begin{equation*}
  \lim_{J\to\infty}\frac{1}{J^2} \E\Big[\var\big[N\Big(\frac\pi\si J\Big)\Big|X,\Phi\big]\Big] =0
\end{equation*}
whence $ \lim_{J\to\infty}\frac{\var[N(\frac\pi\si J)]}{J^2}  =0$.
\end{proof}

\begin{proof}[Proof of Corollary~\ref{cor: special atom}]
Let $M = \limsup_{|t| \rightarrow \infty} \varphi(t)$, where $\varphi$ is defined in \eqref{eq: phi}. By assumption we have $M < 1$ and we define
\[
\theta_0 = \frac{1 - M}{\sqrt2 - M}.
\]
We would like to apply Theorem~\ref{thm: UB} to the spectral measure $\rho_\theta$. Writing $r_\theta = \cF[\rho_\theta]$ and $r = \cF[\rho]$ we have
$r_\theta(t) = (1-\theta) r(t)  + \theta \cos(\si t)$, and $\si_\theta^2 = -r_\theta''(0) = \si^2$.
We accordingly compute
\begin{align*}
  \p_{\theta}(t) &= \max \left\{ |r_\theta(t)| +\frac{|r'_\theta(t)|}{\sigma}, \, \frac{|r''_\theta(t)|}{\sigma^2} + \frac{|r'_\theta(t)|}{\sigma} \right\} \\
  &\le \max \Big\{ (1-\theta)\left(|r(t)| +\frac{|r'(t)|}{\sigma}\right) + \theta (|\cos \si t|+|\sin \si t|), \\
  & \qquad \qquad \qquad(1-\theta)\left(\frac{|r''(t)|}{\sigma^2} + \frac{|r'(t)|}{\sigma}\right) + \theta (|\cos \si t|+|\sin \si t|)\Big\} \\
  &\le (1-\theta)M + \theta \sqrt2
\end{align*}
and so
\[
\limsup_{|t| \rightarrow \infty} \varphi_\theta(t) <  1
\]
for $\theta<\theta_0$. Applying Theorem~\ref{thm: UB} to $\rho_\theta$ and to $\rho$ we obtain
\begin{align*}
\var[N(\rho_\theta; T) ] & \asymp  T\int_{-T}^T \left(1-\frac{|t|}{T}\right) \left(r_\theta(t)+\frac{r_\theta''(t)}{\si_\theta^2}\right)^2 \ dt
\\ &= (1-\theta)^2
T\int_{-T}^T \left(1-\frac{|t|}{T}\right) \left(r(t)+\frac{r''(t)}{\si^2}\right)^2 \ dt \\
& \asymp \var[N(\rho;T)]. \qedhere
\end{align*}

\end{proof}

\subsection{Proof of Proposition~\ref{prop: quad}}\label{sec: quad}
We begin with a review of some elementary harmonic analysis that we will need, for more details and proofs see, e.g., Katznelson's book \cite{Katz}*{Ch. VI}. Let $\M$ denote the space of all finite signed measures on $\R$ endowed with the \emph{total mass} norm $\|\mu\|_1 = \int_\R d |\mu|$. Recall that the \emph{convolution} of two measures $\mu, \nu \in \M$ is given by $(\mu*\nu) (E) = \int \mu(E-\lm) d\nu(\lm)$ for any measurable set $E$ and satisfies $\|\mu *\nu\|_1 \le \|\mu\|_1 \|\nu\|_1$ and $\cF[\mu * \nu] = \cF[\mu]\cdot \cF[\nu]$.
Moreover, $\cF[\cdot]$ is a uniformly continuous map with $\|\cF[\mu]\|_\infty \le \|\mu\|_1$. We identify a function $f\in L^1$ with the measure whose density is $f$.

The following lemma is a version of Parseval's identity, see \cite{Katz}*{VI 2.2}.
\begin{lem}[Parseval]\label{lem: parseval}
If $f,\cF[f]\in L^1(\R)$ and $\nu\in\M$, then $\int f d\nu =\frac{1}{2\pi}\int \cF[f] \overline{\cF[\nu]}$.
\end{lem}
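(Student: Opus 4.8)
The plan is to reduce the identity to an application of the Fourier inversion theorem followed by an interchange of integrals. The only hypothesis doing real work is $\cF[f]\in L^1(\R)$: together with $f\in L^1(\R)$ it guarantees that $f$ agrees almost everywhere (for Lebesgue measure) with a continuous function, and it is this continuous representative that one integrates against $\nu$. I would make this point explicit at the very start, since $\nu$ may carry mass on a Lebesgue-null set, so that $\int f\,d\nu$ is only meaningful once a representative of $f$ has been fixed; the natural and intended choice is the continuous one. I would also note that the right-hand side converges, as $\cF[f]\in L^1(\R)$ while $\cF[\nu]$ is bounded with $\|\cF[\nu]\|_\infty\le \|\nu\|_1$.

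First I would invoke Fourier inversion: since $\cF[f]\in L^1(\R)$, the continuous representative of $f$ satisfies, for every $\lambda\in\R$,
\[
f(\lambda)=\frac{1}{2\pi}\int_\R \cF[f](t)\,e^{i\lambda t}\,dt,
\]
consistently with the convention $\cF[g](t)=\int_\R e^{-i\lambda t}g(\lambda)\,d\lambda$ used throughout. Substituting this into $\int_\R f\,d\nu$ produces a double integral
\[
\int_\R f\,d\nu=\frac{1}{2\pi}\int_\R\left(\int_\R \cF[f](t)\,e^{i\lambda t}\,dt\right)d\nu(\lambda).
\]
I would then justify swapping the order of integration: because
\[
\int_\R\int_\R |\cF[f](t)|\,dt\,d|\nu|(\lambda)=\|\cF[f]\|_1\,\|\nu\|_1<\infty,
\]
Fubini's theorem applies and gives
\[
\int_\R f\,d\nu=\frac{1}{2\pi}\int_\R \cF[f](t)\left(\int_\R e^{i\lambda t}\,d\nu(\lambda)\right)dt.
\]
Since $\nu$ is a real (signed) measure, the inner integral equals $\overline{\int_\R e^{-i\lambda t}\,d\nu(\lambda)}=\overline{\cF[\nu](t)}$, which is exactly the claimed formula.

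The argument is short, and the only point requiring care — the main obstacle in an otherwise routine computation — is the interplay between the Lebesgue-almost-everywhere nature of Fourier inversion and the possibility that $\nu$ is singular. I would dispose of this by fixing from the outset that $f$ denotes the continuous representative furnished by inversion; with this convention the first displayed identity holds pointwise for \emph{every} $\lambda$, and in particular $\nu$-almost everywhere, so no null-set ambiguity survives the substitution. Everything else — the validity of inversion under $f,\cF[f]\in L^1(\R)$ and the integrability bound licensing Fubini — is entirely standard, as in Katznelson \cite{Katz}*{Ch. VI}.
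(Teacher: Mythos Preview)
Your proof is correct and follows the standard route via Fourier inversion and Fubini. Note, however, that the paper does not supply its own proof of this lemma: it simply records the statement and refers the reader to Katznelson \cite{Katz}*{VI 2.2}. Your argument is essentially the textbook one found there, and your explicit attention to fixing the continuous representative of $f$ (so that $\int f\,d\nu$ is unambiguous when $\nu$ may be singular) is a welcome clarification that the paper leaves implicit.
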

A simple application of Parseval's identity proves our next lemma.
\begin{lem}\label{lem: parseval with kernel}
Suppose that $\mu,\nu\in \M$ and $S, \cF[S]\in L^1(\R)$. Then
\[
\int (S *\mu) d\nu = \frac 1 {2\pi} \int \cF[S] \cF[\mu] \overline{\cF[\nu]}.
\]
\end{lem}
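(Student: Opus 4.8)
The plan is to reduce directly to the Parseval identity of Lemma~\ref{lem: parseval}, applied to the single function $f = S * \mu$. The only real content is checking that this $f$ satisfies the hypotheses $f, \cF[f] \in L^1(\R)$ of that lemma; everything else is bookkeeping with the elementary mapping properties of convolution and the Fourier transform on $\M$ and $L^1(\R)$ recalled at the start of this subsection.

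First I would verify $f \in L^1(\R)$: since $S \in L^1(\R)$ (identified with the measure of that density) and $\mu \in \M$, the convolution $S * \mu$ is again an $L^1$ function, with $\norm{S*\mu}_1 \le \norm{S}_1 \norm{\mu}_1$. Next I would verify $\cF[f] \in L^1(\R)$: by multiplicativity of the Fourier transform under convolution, $\cF[S * \mu] = \cF[S] \cdot \cF[\mu]$; now $\cF[S] \in L^1(\R)$ by hypothesis, while $\cF[\mu]$ is a bounded (uniformly continuous) function with $\norm{\cF[\mu]}_\infty \le \norm{\mu}_1$, so the product lies in $L^1(\R)$.

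With these two facts in hand, Lemma~\ref{lem: parseval} applied to $f = S*\mu$ and the measure $\nu$ gives
\[
\int (S * \mu)\, d\nu = \frac{1}{2\pi} \int \cF[S*\mu]\, \ov{\cF[\nu]} = \frac{1}{2\pi} \int \cF[S]\, \cF[\mu]\, \ov{\cF[\nu]},
\]
which is exactly the claimed identity. There is no substantial obstacle here; the one point deserving a word of care is that the right-hand integral is absolutely convergent, but this is immediate since $\cF[S]\cF[\mu] \in L^1(\R)$ and $\cF[\nu]$ is bounded by $\norm{\nu}_1$.
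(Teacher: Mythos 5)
Your proposal is correct and is essentially identical to the paper's own proof: both verify that $f=S*\mu$ satisfies $f,\cF[f]\in L^1(\R)$ via the bounds $\norm{S*\mu}_1\le\norm{S}_1\norm{\mu}_1$ and $\norm{\cF[S]\cF[\mu]}_1\le\norm{\mu}_1\norm{\cF[S]}_1$, and then apply Lemma~\ref{lem: parseval}. No issues.
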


\begin{proof}
Note that $S * \mu$ is a function and further that
\begin{align*}
 \|S *\mu\|_1
 & \le \| \mu \|_1 \| S\|_1 <\infty, \quad \text{ and}
\\ \| \cF[S*\mu] \|_1 & = \| \cF[S] \cF[\mu] \|_1 \le \|\cF[\mu]\|_\infty \|\cF[S]\|_1 \le \|\mu\|_1 \|\cF[S]\|_1 <\infty.
\end{align*}
A simple application of Lemma~\ref{lem: parseval} finishes the proof.
\end{proof}

We will also use the so-called `triangle function'
\[
\mathcal{T}_T(t) = \left(1-\frac{|t|}{T} \right)\ind_{[-T,T]}(t)
\]
which satisfies $\mathcal{T}_T = \cF[\mathcal{S}_T]$ where\footnote{We use the normalisation $\sinc(x)=\frac{\sin x}{x}$.}
\begin{equation*}
  \mathcal{S}_T(\lm) = \frac{T}{2\pi} \  \sinc^2\left(\frac{T \lm}{2}\right).
\end{equation*}
Notice that applying Lemma~\ref{lem: parseval with kernel} to these functions, we obtain
\begin{equation*}
  \int_{-T}^T \left(1-\frac{|t|}{T} \right) |\widehat{\mu}(t)|^2 dt =  \int_\R \mathcal{T}_T |\cF[\mu]|^2 =2\pi  \int (\mathcal{S}_T * \mu)\ d\mu,
\end{equation*}
which is \eqref{eq: parseval for key int}.

We are now ready to prove Proposition~\ref{prop: quad}. First suppose that $\mu$ contains an atom at $\al$. Write $\mu = \mu_1+\mu_2$ where $\mu_1 = c\delta_\al$ for some $c\ne 0$ and $\mu_2(\{\al\})=0$. Note that
\begin{equation}\label{eq: mu2 near al}
|\mu_2([\al-\ep,\al+\ep])|\le |\mu_2|([\al-\ep,\al+\ep])\downarrow 0, \text{   as  }\ep\downarrow 0.
\end{equation}
We have
\begin{align*}
| \cF[\mu](t)|^2 &= |\cF[\mu_1](t)|^2 + 2\text{Re} \{\cF[\mu_1](t)\overline{\cF[\mu_2](t)}\} + |\cF[\mu_2](t)|^2
\\ & \ge |c|^2 + 2\text{Re}\{ \cF[\mu_1](t) \overline{\cF[\mu_2](t)}\}
\end{align*}
Using this and Lemma~\ref{lem: parseval with kernel} we obtain
\begin{align*}
\int_{-T}^{T} (T-|t|) |\widehat{\mu}(t)|^2 dt = T \int_\R \mathcal{T}_T |\cF[\mu]|^2
& \ge |c|^2 T \int_\R \mathcal{T}_T  + 2T \text{Re}\left\{ \int_\R \mathcal{T}_T \cF[\mu_1] \overline{\cF[\mu_2]} \right\}
\\ & = |c|^2 T^2 + 4\pi T \, \text{Re} \left\{\int_\R \mathcal{S}_T * \mu_1 \ d\mu_2\right\}.
\end{align*}
It is therefore enough to show that $\int_\R(\mathcal{S}_T*\mu_1)\ d\mu_2 = o(T)$. We bound
\[
\left|\int (\mathcal{S}_T*\mu_1)(\lm) \ d\mu_2(\lm)\right|
= \left|\frac{cT}{2\pi}\int_\R \sinc^2\big(\tfrac{T}{2}(\lm-\al)\big)  \ d\mu_2(\lm)\right|
\le \frac{|c|T}{2\pi} \int_\R\sinc^2\big(\tfrac{T}{2}(\lm-\al)\big)  \ d|\mu_2|(\lm).
\]

Let $I_\al(T) = \big[\al - \frac{\log T}{T},\al+\frac{\log T}{T}\big]$. By \eqref{eq: mu2 near al} we have
\[
\int_{I_\al(T)} \sinc^2 \big(\tfrac {T}{2}(\lm-\al)\big) d|\mu_2|(\lm) \le |\mu_2|(I_\al(T))\to 0, \quad \text{as  } T\to \infty.
\]
On $\R \setminus I_\al(T)$ we have $\frac{T}{2}|\lm-\al|\ge \frac{\log T}{2}$, so that
\[
\int_{\R \setminus I_\al(T)} \sinc^2 \big(\tfrac {T}{2}(\lm-\al)\big) d|\mu_2|(\lm)
\le \frac 4 {(\log T)^2} |\mu_2|(\R) \to 0, \quad \text{as  } T\to\infty.
\]
This concludes the first part of the proof.

\medskip
Conversely, suppose that $\mu$ contains no atoms. Recall that
\begin{equation*}
  \int_{-T}^T \left(1-\frac{|t|}{T} \right) |\widehat{\mu}(t)|^2 dt = 2\pi  \int (\mathcal{S}_T * \mu)\ d\mu.
\end{equation*}
We will show that $|(\mathcal{S}_T*\mu)(\lm)|= o(T)$, uniformly in $\lm$, which will conclude the proof.
As before, denoting $I_\lm(T) = \big[\lm - \frac{\log T}{T},\lm+\frac{\log T}{T}\big]$ we have
\begin{equation*}
|( \mathcal{S}_T*\mu )(\lm)|
 = \left|\int_\R \frac{T}{2\pi} \sinc^2\big(\tfrac{T}{2}(\lm-\tau \big) d\mu(\tau) \right|
 \le \frac{T}{2\pi} \left( |\mu|(I_\lm(T)) + \frac{4 |\mu|(\R)}{ (\log T)^2} \right).
\end{equation*}
It therefore suffices to prove the following claim.

\begin{clm}\label{clm: eugene}
Let $\nu$ be a non-negative, finite measure on $\R$ that contains no atoms. Then
\[
\sup_{x\in\R} \nu\big([x-\ep,x+\ep]\big) \to 0, \quad \text{as  } \ep\downarrow 0.
\]
\end{clm}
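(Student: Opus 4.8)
If $\nu$ is a non-negative finite atomless measure on $\R$, then $\sup_{x\in\R}\nu([x-\ep,x+\ep])\to 0$ as $\ep\downarrow 0$.

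**Plan.** The plan is to argue by contradiction, so suppose there exist $\delta>0$, a sequence $\ep_n\downarrow 0$, and points $x_n\in\R$ with $\nu([x_n-\ep_n,x_n+\ep_n])\ge\delta$ for all $n$. The first step is to restrict attention to a bounded region: since $\nu$ is finite, there is a compact interval $K$ with $\nu(\R\setminus K)<\delta$, and hence each interval $[x_n-\ep_n,x_n+\ep_n]$ must meet $K$; replacing $K$ by its $1$-neighbourhood (and discarding finitely many $n$ with $\ep_n>1$) we may assume $x_n\in K$. By Bolzano--Weierstrass, pass to a subsequence so that $x_n\to x_\infty\in K$. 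Then for every fixed $\eta>0$ and all large $n$ we have $[x_n-\ep_n,x_n+\ep_n]\subseteq[x_\infty-\eta,x_\infty+\eta]$, whence $\nu([x_\infty-\eta,x_\infty+\eta])\ge\delta$. Letting $\eta\downarrow0$ and using continuity of measure from above (the sets $[x_\infty-\eta,x_\infty+\eta]$ decrease to $\{x_\infty\}$ and all have finite measure), we get $\nu(\{x_\infty\})\ge\delta>0$, contradicting the assumption that $\nu$ has no atoms.

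**Main obstacle.** There is no serious obstacle here; this is a soft compactness argument. The only point requiring a little care is the localisation step — one must use finiteness of $\nu$ to confine the bad intervals to a compact set before extracting a convergent subsequence of centres — and the observation that continuity from above is legitimate precisely because $\nu$ is a finite measure. An alternative, slightly slicker route avoids contradiction entirely: define $g(\ep)=\sup_{x}\nu([x-\ep,x+\ep])$, note $g$ is non-decreasing in $\ep$ so $L:=\lim_{\ep\downarrow0}g(\ep)$ exists, pick $x_n$ with $\nu([x_n-1/n,x_n+1/n])\ge L-1/n$, and run the same compactness argument to produce an atom of mass $\ge L$; atomlessness forces $L=0$. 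Either formulation is short, so I would present whichever reads more cleanly — probably the direct one via $g$.
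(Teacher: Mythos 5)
Your argument is correct, and it takes a genuinely different route from the paper's. You localise the offending intervals to a compact set using the finiteness of $\nu$, extract a convergent subsequence of centres by Bolzano--Weierstrass, and then squeeze the mass down onto the limit point via continuity of a finite measure from above, producing an atom; the paper instead fixes a compact interval $[-N,N]$, partitions it into dyadic subintervals, shows that every generation contains a dyadic interval of $\nu$-mass at least $\delta$, and then builds a nested sequence of such intervals whose intersection (by Cantor's lemma) is an atom. The two proofs exploit the same underlying compactness of $\R$, but yours does so through the standard sequential-compactness package and is somewhat shorter and more familiar, while the paper's nested-dyadic construction is essentially a hands-on, constructive version of the same extraction. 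One small point of care in your write-up, which you do address: after passing to the compact neighbourhood you must discard the finitely many $n$ with $\ep_n>1$ before concluding the centres $x_n$ lie in a bounded set, and the appeal to continuity from above is legitimate only because $\nu$ is finite --- both hypotheses are genuinely used, exactly as in the paper. Your alternative formulation via the monotone function $g(\ep)=\sup_x\nu([x-\ep,x+\ep])$ is also fine and mirrors the paper's opening step of observing that $m(\ep)$ decreases to a limit.
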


\begin{proof}
Denote  $B(x, \ep) =[x-\ep,x+\ep]$ and
$m(\ep) = \sup_{x\in\R} \nu\big(B(x, \ep)\big)$. It is clear that $m(\ep)$ decreases with $\ep$ so $m(\ep)$ must converge as $\ep \downarrow 0$ to some non-negative limit, $2\delta\ge 0$. Suppose that $\delta>0$ and choose $N>0$ such that
$\nu(\R\setminus [-N/2,N/2])<\delta$.
Fix $n\in \N$ and divide $[-N,N]$ into disjoint `dyadic' intervals
\[ D_n  = \left\{ [kN2^{-n},(k+1) N2^{-n} ):   k\in \Z \cap [-2^n, 2^n) \right\}.  \]

For any $x\in \R$, either $B(x,\frac N{2^n})\subseteq \R\setminus [-N/2,N/2]$, which implies that $\nu(B(x,\frac N{2^n}))<\delta$,
or $B(x,\frac N{2^n})\subseteq I \cup I'$ for some $I, I' \in D_{n-1}$. Therefore,
\[
m\big(\tfrac{N}{2^n}\big) \le \max\left(\delta, 2\sup_{I\in D_{n-1}} \nu(I) \right).
\]
Recall that by definition of $\delta$ we have $m\big(\tfrac{N}{2^n}\big)\ge 2\delta$. We conclude that for every $n \in \N$ we can find $I_n \in D_n$ such that
\begin{equation}\label{eq: I>delta}
\nu(I_{n}) \ge \delta.
\end{equation}

Next we shall construct a sequence of nested dyadic intervals $\{J_n\}_{n=0}^\infty$ such that, for all $n$,
\begin{equation*}
J_n \in D_n, \quad J_{n+1}\subseteq J_{n}, \quad  \nu(J_n) \ge \delta.
\end{equation*}
This will imply, by Cantor's lemma, that $\bigcap_{n} J_n = \{x\}$ for some $x\in\R$, and further that
$\nu(\{x\}) = \lim_{n\to\infty}\nu(J_n) \ge \delta >0$. This contradicts the assumption that $\nu$ has no atoms, which will end our proof.

We start by setting $J_0 = [-N,N]$.
Suppose that we have constructed $J_0\supset J_1 \supset J_2 \supset \dots\supset J_m$ such that for every $n>m$ we can find $I_n' \in D_n$ that satisfies
\begin{equation}\label{eq: J2}
I_n'\subset J_m, \quad\text{and}\quad \nu(I_n') \ge \delta;
\end{equation}
that is, the interval $J_m$ has a descendant of any generation whose $\nu$-measure is at least $\delta$. Notice that this holds for $m=0$ by \eqref{eq: I>delta}. Notice that if \eqref{eq: J2} fails for both descendants of $J_m$ in the generation $D_{m+1}$, then it also fails for $J_m$, since $\nu(J)\ge\nu(J')$ for every descendant $J'\subseteq J$. This completes the inductive construction of $J_m$ and consequently the proof.
\end{proof}

\subsection{Singular continuous measures: Proof of Proposition~\ref{prop: sing var}}\label{sec: cont sing}
Throughout this section we assume the notation of Section~\ref{subsec: sing cnts exa} and that $\alpha_n>R_n$ for every $n\ge 1$. We begin with the following observation.
\begin{lem} \label{lem: cont sing measure bound}
  (i) Suppose that $I$ is an interval and $|I|< R_n$. Then $\rho(I)\le2^{-n}$.\\
  (ii) Let $\delta>0$ and suppose that $R_n<\frac\delta4$. Then $\rho((\lm-\delta,\lm+\delta))\ge 2^{-n}$ for any $\lm$ in the support of $\rho$.
\end{lem}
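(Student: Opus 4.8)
The plan is to peel off the first $n$ factors of the infinite convolution defining $\rho=\rho_\alpha$. I would write $\rho^{(\le n)}=\delta^*_{\alpha_1}*\cdots*\delta^*_{\alpha_n}$ for the finite convolution and $\rho^{(>n)}=\delta^*_{\alpha_{n+1}}*\delta^*_{\alpha_{n+2}}*\cdots$ for the tail, so that $\rho=\rho^{(\le n)}*\rho^{(>n)}$. We are in the setting $R_0=\sum_{i\ge1}\alpha_i<\infty$, so the tail product converges weakly to a probability measure with $\operatorname{supp}\rho^{(>n)}\subseteq[-R_n,R_n]$. The measure $\rho^{(\le n)}$ is supported on the $2^n$ Bernoulli sums $x_s:=\sum_{i=1}^n s_i\alpha_i$, $s\in\{\pm1\}^n$. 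Everything rests on one elementary separation estimate, which is where the hypothesis $\alpha_k>R_k$ enters: for $s\ne s'$, letting $k$ be the least index with $s_k\ne s_k'$ (say $s_k=1$, $s_k'=-1$),
\[
|x_s-x_{s'}|=\Big|2\alpha_k+\sum_{i=k+1}^n(s_i-s_i')\alpha_i\Big|\ \ge\ 2\alpha_k-2\sum_{i=k+1}^n\alpha_i\ =\ 2(\alpha_k-R_k)+2R_n\ >\ 2R_n,
\]
using $\sum_{i=k+1}^n\alpha_i=R_k-R_n$. In particular the $x_s$ are pairwise distinct, so $\rho^{(\le n)}=2^{-n}\sum_s\delta_{x_s}$, and the $2^n$ intervals $x_s+[-R_n,R_n]$ are pairwise disjoint.

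For part~(i): given an interval $I$ with $|I|<R_n$, the convolution identity gives $\rho(I)=2^{-n}\sum_s\rho^{(>n)}(I-x_s)$. Since the centres $x_s$ are more than $2R_n>|I|$ apart, the translates $I-x_s$ are pairwise disjoint intervals, whence $\sum_s\rho^{(>n)}(I-x_s)\le\rho^{(>n)}(\R)=1$ and therefore $\rho(I)\le 2^{-n}$.

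For part~(ii): from $\operatorname{supp}\rho=\bigcup_s\big(x_s+\operatorname{supp}\rho^{(>n)}\big)\subseteq\bigcup_s\big(x_s+[-R_n,R_n]\big)$, any $\lambda\in\operatorname{supp}\rho$ lies within $R_n$ of some $x_s$. Assuming $R_n<\delta/4$ (so in particular $\delta-R_n>R_n$), the interval $(\lambda-\delta,\lambda+\delta)-x_s$ has left endpoint $\le R_n-\delta<-R_n$ and right endpoint $\ge\delta-R_n>R_n$, hence contains $[-R_n,R_n]$ and therefore all of $\operatorname{supp}\rho^{(>n)}$; keeping just the $x_s$-term in $\rho\big((\lambda-\delta,\lambda+\delta)\big)=2^{-n}\sum_{s'}\rho^{(>n)}\big((\lambda-\delta,\lambda+\delta)-x_{s'}\big)$ yields $\rho\big((\lambda-\delta,\lambda+\delta)\big)\ge 2^{-n}\rho^{(>n)}([-R_n,R_n])=2^{-n}$.

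I do not anticipate a real obstacle: the crux is the displayed separation inequality — it is elementary, but it is the only place the Cantor-type condition $\alpha_k>R_k$ is used, and it is what makes the two parts match up, since it simultaneously forces the atoms of $\rho^{(\le n)}$ to have mass exactly $2^{-n}$ and keeps their $R_n$-neighbourhoods disjoint. The rest is clerical: the weak convergence and compact support of the tail product (immediate from $R_0<\infty$), and careful bookkeeping of strict versus non-strict inequalities among $|I|$, $R_n$ and $\delta$.
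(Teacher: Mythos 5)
Your proof is correct and follows essentially the same route as the paper's: both rest on splitting $\rho$ into the first $n$ convolution factors (a uniform measure on $2^n$ Bernoulli sums that are pairwise separated by more than $2R_n$, which is exactly where $\alpha_k>R_k$ enters) and a tail supported in $[-R_n,R_n]$. The only difference is presentational — you use the factorisation $\rho=\rho^{(\le n)}*\rho^{(>n)}$ uniformly for both parts, whereas the paper argues part (i) via weak convergence of the partial convolutions and part (ii) via a probabilistic coupling of $X$ with $X_n$ — and your separation estimate is, if anything, stated more carefully than the paper's.
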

\begin{rmks} \phantom{w}
  \begin{enumerate}[label=\arabic*.]
    \item A probability measure $\mu$ is said to be \emph{exact dimensional} if there exists $d$ such that
        \begin{equation}\label{eq: exact dim}
          \lim_{\delta\to 0}\frac{\log \mu((\lm-\delta,\lm+\delta))}{\log\delta}=d
        \end{equation}
        for $\mu$-almost every $\lm$. It follows from this lemma that the measure $\rho_a$ is exact dimensional for $0<a<\frac12$, with $d=\log2/\log\frac1a$, and moreover
        \begin{equation*}
          \left|\frac{\log \rho_a((\lm-\delta,\lm+\delta))}{\log\delta}-d\right|= O\left( \frac 1{\log \frac1\delta}\right)
        \end{equation*}
        uniformly for every $\lm$ in the support of $\rho_a$.

    \item If $\frac12\le a <1$ then the measure $\rho_a$ is also exact dimensional, but understanding more detailed properties seems to be a difficult question, related to certain notions of entropy. We refer the reader to the surveys \cites{G,V} for a more thorough discussion.

    \item For any compactly supported exact dimensional spectral measure $\rho$, of dimension $d$, such that \eqref{eq: exact dim} converges uniformly for $\rho$-almost every $\lm$, we may imitate the proof of Proposition~\ref{prop: sing var} to yield
        \begin{equation*}
          cT^{2-d-\ep}\le T\int_{0}^T \left(1-\frac{t}{T}\right) \left(r(t)+\frac{r''(t)}{\si^2}\right)^2 \ dt  \le C T^{2-d+\ep}
        \end{equation*}
        for any $\ep>0$ and  some $c,C>0$.
  \end{enumerate}
\end{rmks}
\begin{proof}
  (i) We begin by recalling Kershner and Wintner's proof that the spectrum is a Cantor type set. Set $C_0=[-R_0,R_0]$. We form $C_1$ by deleting an interval of length $2(\alpha_1-R_1)$ from the centre of $C_0$, which yields $C_1=[-R_0,-\alpha_1+R_1]\cup[\alpha_1-R_1,R_0]$. We inductively construct $C_n$ to consist of $2^n$ (closed) intervals formed by deleting an interval of length $2(\alpha_n-R_n)$ from the centre of each of the intervals in the previous generation. Then $S=\cap_{n=0}^\infty C_n$ is the spectrum.

  Let $\rho_n=\delta_{\alpha_1}^* * \delta_{\alpha_2}^* *\dots* \delta_{\alpha_n}^*$ which is supported on the set
  \begin{equation*}
    S_n=\left\{\sum_{j=1}^n \ep_j \alpha_j : \ep_j=\pm 1\right\}
  \end{equation*}
  and assigns a mass of $2^{-n}$ to each of the points of $S_n$. Furthermore, the elements of $S_n$ are the midpoints of the intervals that make up $C_n$, and if $s,s'\in S_n$ are distinct then $|s-s'|\ge2\alpha_n>2R_n$. We conclude that if $|I|< R_n$ then $I$ contains at most one element of $S_n$ and so $\rho_n(I)\le 2^{-n}$. Moreover $I$ can only intersect with the descendants of one of the intervals of $C_n$, and so $\rho_m(I)\le 2^{-n}$ for every $m\ge n$.

  It remains to note that $\rho_m$ converges weakly to $\rho$, and that an interval is always a continuity set of the measure $\rho$, whence
  \begin{equation*}
    \rho(I)=\lim_{m\to\infty}\rho_m(I)\le 2^{-n}.
  \end{equation*}

  \medskip\noindent(ii) In fact the measure $\rho_n$ introduced above is the law of the random variable
  \begin{equation*}
    X_n=\sum_{j=1}^n \ep_j \alpha_j
  \end{equation*}
  where $\ep_j$ denotes a sequence of i.i.d. Rademacher\footnote{That is, $\Pro[\ep_j=1]=\Pro[\ep_j=-1]=\frac12$.} random variables, while $\rho$ is the law of the infinite sum
  \begin{equation}\label{def: X}
    X=\sum_{j=1}^\infty \ep_j \alpha_j.
  \end{equation}
  For $\lm$ in the support of $\rho$ we can write $\lm=\sum_{j=1}^\infty \widetilde{\ep}_j \alpha_j$ for some fixed sequence $\widetilde{\ep}_j\in\{-1,1\}$ and we write $\lm_n=\sum_{j=1}^n \widetilde{\ep}_j \alpha_j$.

  Notice that $|X-\lm|\le|X_n-\lm_n|+2R_n$. If $R_n<\frac\delta4$ we therefore have
  \begin{equation*}
    \Pro\big[|X-\lm|<\delta\big]\ge\Pro\big[|X_n-\lm_n|<2R_n\big]=\Pro[\ep_j=\widetilde{\ep}_j\text{ for }1\le j\le n]=2^{-n}.\qedhere
  \end{equation*}
\end{proof}

We now proceed to prove Proposition~\ref{prop: sing var}.

\begin{proof}[Proof of Proposition~\ref{prop: sing var}]
  Using \eqref{eq: parseval for key int} we see that we wish to prove that
  \begin{equation}\label{eq: goal sing prop}
    \iint_{\R^2} \sinc^2\left(\frac T2(\lm-\lm')\right)d\mu(\lm)d\mu(\lm')\asymp 2^{-M}
  \end{equation}
  where, as before, $d\mu(\lm)=\left(1-\frac{\lm^2}{\si^2}\right)d\rho(\lm)$. Fix $A>4$ (to be chosen large) and $\frac12<\beta<1$ and define $\widetilde{M}$ by $R_{\widetilde{M}}<\frac{2A}T\le R_{\widetilde{M}-1}$. We claim that\footnote{Throughout this proof $c,c',C$ and $C'$ denote positive constants whose exact value is irrelevant and which may vary from one occurrence to the next. They may depend on the measure $\rho$ but are independent of $A$ and $T$.}
  \begin{equation*}
    \eta2^{-M}-\frac {CA}T 2^{-\widetilde{M}}\le \iint\displaylimits_{|\lm-\lm'|<\frac AT} \sinc^2\left(\frac T2(\lm-\lm')\right)d\mu(\lm)d\mu(\lm') \le C' 2^{-\widetilde{M}}
  \end{equation*}
  for some constant $\eta>0$ (depending only on $\rho$) and that
  \begin{equation*}
    \iint\displaylimits_{\frac {2A}T\le|\lm-\lm'|<2^{\beta M}\frac {2A}T} \sinc^2\left(\frac T2(\lm-\lm')\right)d|\mu|(\lm)d|\mu|(\lm')\le \frac{C2^{-\widetilde{M}}}{A^2}.
  \end{equation*}

  Let us first see how the claims imply the proposition. Since $R_n<\frac12R_{n-1}$ we see that
  \begin{equation*}
    \frac{1}T\le R_{M-1}\le 2^{\widetilde{M}-M-1}R_{\widetilde{M}}<2^{\widetilde{M}-M}\frac AT
  \end{equation*}
  which yields $2^{M-\widetilde{M}}<A$. Inserting this into our claims and combining them we get
  \begin{equation*}
    \left(\eta-C\Big(\frac1A+\frac {A^2}T \Big)\right)2^{-M}\le \iint\displaylimits_{|\lm-\lm'|<2^{\beta M}\frac {2A}T} \sinc^2\left(\frac T2(\lm-\lm')\right)d\mu(\lm)d\mu(\lm') \le C' \Big(A+\frac1A\Big)2^{-M}.
  \end{equation*}
  We now fix $A$ so large that the lower bound in this expression is at least $\left(\frac\eta2-\frac {CA^2}T \right)2^{-M}$. We then have
  \begin{equation*}
    \frac\eta4 2^{-M}\le \iint\displaylimits_{|\lm-\lm'|<2^{\beta M}\frac {2A}T} \sinc^2\left(\frac T2(\lm-\lm')\right)d\mu(\lm)d\mu(\lm') \le C' \Big(A+\frac1A\Big)2^{-M}
  \end{equation*}
  for sufficiently large $T$.

  We finally bound
  \begin{equation*}
    \iint\displaylimits_{|\lm-\lm'|\ge2^{\beta M}\frac {2A}T} \sinc^2\left(\frac T2(\lm-\lm')\right)d|\mu|(\lm)d|\mu|(\lm')\le \frac{2^{-2\beta M}} {A^2} \iint\displaylimits_{\R^2} d|\mu|(\lm)d|\mu|(\lm') \le \frac C{A^2} 2^{-2\beta M} =o(2^{-M}),
  \end{equation*}
  since $\beta>\frac12$ and $A$ has been fixed. This yields \eqref{eq: goal sing prop}.

  It remains only to establish the two claims. We begin with the terms near the diagonal. Since the spectrum is compact we note that $1-\frac{\lm^2}{\si^2}$ is bounded on the support of $\rho$, and using part (i) of Lemma~\ref{lem: cont sing measure bound} we see that
  \begin{align*}
    \iint\displaylimits_{|\lm-\lm'|<\frac AT} \sinc^2\left(\frac T2(\lm-\lm')\right)d\mu(\lm)d\mu(\lm')& \le C \iint\displaylimits_{|\lm-\lm'|<\frac AT} d\rho(\lm)d\rho(\lm')\\
    &= C \int_\R \rho\left(\left(\lm-\frac AT,\lm+\frac AT\right)\right)d\rho(\lm)\le C2^{-\widetilde{M}}.
  \end{align*}
  In the other direction we note that
  \begin{align*}
    \iint\displaylimits_{|\lm-\lm'|<\frac AT} \sinc^2\left(\frac T2(\lm-\lm')\right)d\mu(\lm)d\mu(\lm')& = \iint\displaylimits_{|\lm-\lm'|<\frac AT} \sinc^2\left(\frac T2(\lm-\lm')\right) \left(1-\frac{\lm^2}{\si^2}\right)^2 d\rho(\lm)d\rho(\lm')\\
    &\qquad + \iint\displaylimits_{|\lm-\lm'|<\frac AT}\sinc^2\left(\frac T2(\lm-\lm')\right) \frac{\lm^2-(\lm')^2}{\si^2} d\mu(\lm)d\rho(\lm').
  \end{align*}
  The integrand of the first term is positive and so it is bounded from below by
  \begin{equation*}
    \iint\displaylimits_{\substack{|\lm-\lm'|<\frac 4T \\ ||\lm|-\si|>c}}\sinc^2\left(\frac T2(\lm-\lm')\right) \left(1-\frac{\lm^2}{\si^2}\right)^2 d\rho(\lm)d\rho(\lm')\ge c' \iint\displaylimits_{\substack{|\lm-\lm'|<\frac 4T \\ ||\lm|-\si|>c}} d\rho(\lm)d\rho(\lm')\ge c' 2^{-M}
  \end{equation*}
  where we have used part (ii) of Lemma~\ref{lem: cont sing measure bound}. We also estimate, similar to before,
  \begin{equation*}
    \left|\iint_{|\lm-\lm'|<\frac AT}\sinc^2\left(\frac T2(\lm-\lm')\right) \frac{\lm^2-(\lm')^2}{\si^2} d\mu(\lm)d\rho(\lm')\right|\le C\frac AT \iint\displaylimits_{|\lm-\lm'|<\frac AT}d\rho(\lm)d\rho(\lm')\le C\frac AT 2^{-\widetilde{M}}
  \end{equation*}
  and we have therefore shown that
  \begin{equation*}
    \eta2^{-M}-\frac {CA}T 2^{-\widetilde{M}}\le \iint\displaylimits_{|\lm-\lm'|<\frac AT} \sinc^2\left(\frac T2(\lm-\lm')\right)d\mu(\lm)d\mu(\lm') \le C' 2^{-\widetilde{M}}
  \end{equation*}
  where $\eta>0$, as desired.

  We finally estimate the off-diagonal contribution. We have
  \begin{equation*}
    \iint\displaylimits_{2^j\frac {A}T\le|\lm-\lm'|<2^{j+1}\frac {A}T} \sinc^2\left(\frac T2(\lm-\lm')\right)d|\mu|(\lm)d|\mu|(\lm')\le \frac{C}{2^{2j}A^2}\iint\displaylimits_{2^j\frac {A}T\le|\lm-\lm'|<2^{j+1}\frac {A}T} d\rho(\lm)d\rho(\lm').
  \end{equation*}
  For $0\le j \le \beta M$ we notice that $\frac{2^{\beta M}}T<2^{\beta M}R_M=o(2^{M}R_M)=o(1)$ and, since $R_n<\frac12R_{n-1}$, we see that $R_{\widetilde{M}-j}\ge2^{j-1}R_{\widetilde{M}-1}\ge\frac {2^jA}T$. Applying part (i) of Lemma~\ref{lem: cont sing measure bound} once more we get
  \begin{equation*}
    \iint\displaylimits_{2^j\frac {A}T\le|\lm-\lm'|<2^{j+1}\frac {A}T} d\rho(\lm)d\rho(\lm')\le 2^{j-\widetilde{M}}.
  \end{equation*}
  Summing we get
  \begin{equation*}
    \iint\displaylimits_{\frac {2A}T\le|\lm-\lm'|<2^{\beta M}\frac {2A}T} \sinc^2\left(\frac T2(\lm-\lm')\right)d|\mu|(\lm)d|\mu|(\lm')\le \frac{C2^{-\widetilde{M}}}{A^2}\sum 2^{-j} = \frac{C2^{-\widetilde{M}}}{A^2}
  \end{equation*}
  as claimed.
\end{proof}

\section{Proof of Proposition~\ref{prop: divisibility} }\label{section: proof of divisibility}
\subsection{Dehomogenisation}

Our first step is based on the following lemma.
\begin{lem}\label{lem: homo}
Let $P(x,y,z)$ be a homogeneous polynomial. Then $(x+z)^{2}\mid P(x,y,z)$ if and only if $P(-1,y,1)=0$ and $\frac{\partial P}{\partial x}(-1,y,1)=0$.
\end{lem}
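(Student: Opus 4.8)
The plan is to turn the divisibility condition into the vanishing of two polynomials in $y$ and $z$ extracted from $P$, and then use homogeneity to replace those by the one-variable polynomials $P(-1,y,1)$ and $\frac{\partial P}{\partial x}(-1,y,1)$. One direction is immediate: if $P=(x+z)^{2}Q$ for some polynomial $Q$, then $P(-1,y,1)=0$, and since $\frac{\partial P}{\partial x}=2(x+z)Q+(x+z)^{2}\frac{\partial Q}{\partial x}$, also $\frac{\partial P}{\partial x}(-1,y,1)=0$.

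For the converse, I would first regard $P$ as an element of $\R[y,z][x]$ and divide by $(x+z)^{2}$, which is monic in $x$; this is legitimate over the coefficient ring $\R[y,z]$ and gives $P=(x+z)^{2}Q+a_{1}(y,z)\,x+a_{0}(y,z)$ with $a_0,a_1\in\R[y,z]$. Since $x+z$ is irreducible in the UFD $\R[x,y,z]$, we have $(x+z)^{2}\mid P$ precisely when $a_{0}\equiv a_{1}\equiv 0$. Evaluating this identity, and its partial derivative in $x$, at $x=-z$ gives $\frac{\partial P}{\partial x}(-z,y,z)=a_{1}(y,z)$ and $P(-z,y,z)=a_{0}(y,z)-z\,a_{1}(y,z)$; hence $a_0\equiv a_1\equiv 0$ if and only if both $P(-z,y,z)\equiv 0$ and $\frac{\partial P}{\partial x}(-z,y,z)\equiv 0$ as polynomials in $y,z$.

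It remains to match these two conditions with the hypotheses of the lemma. If $P$ is homogeneous of degree $d$, then $P(-z,y,z)$ is homogeneous of degree $d$ in the pair $(y,z)$ and $\frac{\partial P}{\partial x}(-z,y,z)$ is homogeneous of degree $d-1$ (the substitution $x=-z$ preserves homogeneity in $(y,z)$, and differentiation lowers the degree by one). Since a homogeneous polynomial $g(y,z)=\sum_{i}c_i y^{i}z^{d-i}$ vanishes identically if and only if its dehomogenisation $g(y,1)=\sum_i c_i y^i$ does, and since setting $z=1$ carries $P(-z,y,z)$ and $\frac{\partial P}{\partial x}(-z,y,z)$ to $P(-1,y,1)$ and $\frac{\partial P}{\partial x}(-1,y,1)$ respectively, the two conditions above are equivalent to $P(-1,y,1)=0$ and $\frac{\partial P}{\partial x}(-1,y,1)=0$. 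This closes the chain of equivalences.

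I do not expect a real obstacle: Lemma~\ref{lem: homo} is a purely formal statement about polynomial rings, and the only points that need care are the legitimacy of division by the $x$-monic polynomial $(x+z)^{2}$ over $\R[y,z]$ and the bookkeeping of homogeneity under $x\mapsto -z$. The substantive work lies downstream, in applying this criterion to the explicit $P_{q}$ of \eqref{eq : polys with correction} and verifying the identities $P_{q}(-1,y,1)=0$ and $\frac{\partial P_{q}}{\partial x}(-1,y,1)=0$ by Zeilberger's algorithm.
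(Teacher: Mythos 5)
Your proof is correct and follows essentially the same route as the paper: both arguments reduce divisibility by $(x+z)^{2}$ to second-order vanishing along $x=-z$ (via a Taylor/remainder expansion in the variable $x$) and then use homogeneity to pass between identities in $(y,z)$ and their specialisations at $z=1$. The only organisational difference is that you stay in $\R[y,z][x]$ throughout, which neatly sidesteps the paper's need to observe that the rehomogenised quotient $z^{\deg P-2}g\left(\frac{x}{z},\frac{y}{z}\right)$ is again a polynomial.
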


\begin{proof}
Consider the polynomial $f(x,y)=P(x,y,1)$ and write $f$ as a polynomial in $x+1$ to obtain $f(x,y)=\sum_{j=0}^{d}a_{j}(y)\cdot(x+1)^{j}$.
Suppose first that
\begin{equation}
a_{0}(y)=f(-1,y)=P(-1,y,1)=0\label{eq:a_0}
\end{equation}
and
\begin{equation}
a_{1}(y)=\frac{\partial f}{\partial x}(-1,y)=\frac{\partial P}{\partial x}(-1,y,1)=0.\label{eq:a_1}
\end{equation}
It follows that $(x+1)^{2}\mid f(x,y)$, and we write $f(x,y)=(x+1)^{2}g(x,y)$.

As $P(x,y,z)$ is homogeneous, one has
\begin{align*}
  P(x,y,z)&=z^{\deg P}P\left(\frac{x}{z},\frac{y}{z},1\right)=z^{\deg P}f\left(\frac{x}{z},\frac{y}{z}\right)\\
  &=z^{\deg P}\left(\frac{x}{z}+1\right)^{2}g\left(\frac{x}{z},\frac{y}{z}\right)=(x+z)^{2}\cdot z^{\deg P-2}g\left(\frac{x}{z},\frac{y}{z}\right).
\end{align*}
Finally $z^{\deg P-2}g\left(\frac{x}{z},\frac{y}{z}\right)$
is a homogeneous polynomial, and we are done.

For the converse, note that if $(x+z)^{2}\mid P(x,y,z)$, then $(x+1)^{2}\mid f(x,y)$,
hence equations \eqref{eq:a_0} and \eqref{eq:a_1} hold.
\end{proof}

In light of Lemma~\ref{lem: homo}, Proposition~\ref{prop: divisibility} is equivalent to the next proposition.
\begin{prop} \label{prop: P_q vanish} For all $q \ge 1$ we have
\begin{enumerate}[label={\rm (\alph*)}]
\item $P_{q}(-1,y,1)=0$, and
\item $\frac{\partial P_{q}}{\partial x}(-1,y,1)=0$.
\end{enumerate}
\end{prop}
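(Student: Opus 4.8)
The plan is to reduce parts (a) and (b) to a pair of explicit terminating hypergeometric identities in $q$, and then settle these by (multivariate) creative telescoping, which is what the method of \cite{AZ06} provides.

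\emph{Setting up.} Substituting $x=-1,z=1$ into \eqref{eq : polys no correction} sends each monomial $x^{2(q-l_1-l_2)+n}y^{2(l_1+l_2-n)}z^{n}$ to $(-1)^{n}y^{2(l_1+l_2-n)}$, while the correction term in \eqref{eq : polys with correction} contributes $c_q\bigl(-1+(2q-1)y^2\bigr)$. Hence part (a) is equivalent, for every $q\ge1$, to the polynomial identity in $y$
\begin{equation}\label{eq: targ a}
\widetilde{P}_q(-1,y,1)=\sum_{l_1,l_2=0}^{q}a_q(l_1)a_q(l_2)\sum_{n=\max(0,2l_1+2l_2-2q)}^{\min(2l_1,2l_2)}(-1)^{n}b_q(l_1,l_2,n)\,y^{2(l_1+l_2-n)}=c_q\bigl(1-(2q-1)y^2\bigr).
\end{equation}
For part (b) I would first exploit that $\widetilde{P}_q$ is homogeneous of degree $2q$: Euler's identity evaluated at $(-1,y,1)$, together with \eqref{eq: targ a} (which also yields $\partial_y\widetilde{P}_q(-1,y,1)=-2(2q-1)c_q\,y$), shows that, granting part (a), part (b) is equivalent to $\partial_z\widetilde{P}_q(-1,y,1)=c_q$, i.e.\ to
\begin{equation}\label{eq: targ b}
\sum_{l_1,l_2=0}^{q}a_q(l_1)a_q(l_2)\sum_{n} n\,(-1)^{n}b_q(l_1,l_2,n)\,y^{2(l_1+l_2-n)}=c_q .
\end{equation}
This last equivalence is a short manipulation that uses $c_q$ only through its definition \eqref{eq : c coeffs}.

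\emph{The identities.} In \eqref{eq: targ a} and \eqref{eq: targ b} the summand, regarded as a function of the recurrence variable $q$ and the summation indices $l_1,l_2,n$ with $y$ a parameter, is a proper hypergeometric term: a quotient of factorials of integer-linear forms in $(q,l_1,l_2,n)$ times $y^{2(l_1+l_2-n)}$, the only mildly non-standard ingredient being the factor $\tfrac1{2l_1-1}\cdot\tfrac1{2l_2-1}$ coming from $a_q$, which is still hypergeometric. I would run the telescoping algorithm of \cite{AZ06} to produce, for each of \eqref{eq: targ a} and \eqref{eq: targ b}, a linear recurrence in $q$ with coefficients polynomial in $q$ (and $y$) that is satisfied by the left-hand side, then verify that the claimed right-hand side satisfies the same recurrence — for which one needs only the relation $c_{q+1}=\tfrac{8q}{2q+1}c_q$ following from \eqref{eq : c coeffs} — and finally check the finitely many initial values up to the order of the recurrence. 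As a lower-tech cross-check one may instead extract the coefficient of each $y^{2k}$ (forcing $n=l_1+l_2-k$, $|l_1-l_2|\le k$): the $y^{0}$-coefficient of \eqref{eq: targ a} collapses to $l_1=l_2=l,\ n=2l$ and becomes $\sum_{l}\binom{2l}{l}\binom{2q-2l}{q-l}(2l-1)^{-2}=c_q$, which is exactly Lemma~\ref{lem: c_q}, while the $y^{0}$-coefficient of \eqref{eq: targ b}, via $\tfrac{2l}{(2l-1)^2}=\tfrac1{2l-1}+\tfrac1{(2l-1)^2}$, reduces to Lemma~\ref{lem: c_q} together with $\sum_{l}\binom{2l}{l}\binom{2q-2l}{q-l}(2l-1)^{-1}=0$ for $q\ge1$, provable by the same generating-function argument used for Lemma~\ref{lem: c_q} (since $\sum_{l}\binom{2l}{l}\tfrac{x^{2l}}{2l-1}=-\sqrt{1-4x^2}$).

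\emph{Main obstacle.} The content lies entirely in the execution of the creative telescoping: one must handle the constrained summation range — the $\max/\min$ bounds on $n$ and the box $0\le l_1,l_2\le q$ — so that the certificate (telescoped) sums genuinely vanish at the boundary, carry the parameter $y$ along correctly, and push through the factorial bookkeeping needed to confirm both the base cases and that the proposed closed forms solve the recurrences. Conceptually nothing is deep once the statement is phrased as \eqref{eq: targ a}–\eqref{eq: targ b}; the risk is purely combinatorial-algebraic. A final sanity check is that \eqref{eq: targ a}–\eqref{eq: targ b} force both $P_q$ and $\partial_x P_q$ to vanish on the plane $x+z=0$, which — through Proposition~\ref{prop: main} — is consistent with the exact formula $\var[N(T)]=\{\si T/\pi\}(1-\{\si T/\pi\})$ for the degenerate process $r(t)=\cos(\si t)$.
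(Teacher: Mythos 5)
Your plan is essentially the paper's: dehomogenise at $(-1,y,1)$, reduce both parts to terminating hypergeometric identities, and settle them by Apagodu--Zeilberger creative telescoping in $q$ together with base cases. Your reductions are all correct --- in particular the restatement $\widetilde{P}_q(-1,y,1)=c_q\bigl(1-(2q-1)y^2\bigr)$ and the $y^0$-coefficient checks, which are exactly Lemma~\ref{lem: c_q} and its companion $\sum_l\binom{2l}{l}\binom{2q-2l}{q-l}(2l-1)^{-1}=0$. The one genuine difference is your use of Euler's identity to trade $\partial_x\widetilde{P}_q(-1,y,1)$ for $\partial_z\widetilde{P}_q(-1,y,1)=c_q$: the paper telescopes the sums $d'_q(k)$ carrying the weight $2q-l_1-l_2-k$ (the $x$-exponent), whereas yours carry $n=l_1+l_2-k$; the two are linearly related once part (a) is known, and your version is marginally cleaner. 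The one execution risk you underestimate is degeneration of the recurrence: in the paper's coefficient-by-coefficient formulation the certificate yields a second-order recurrence in $q$ for $S_q(k)=2^{-4q}d_q(k)$ whose coefficients have a pole at $k=q+2$, so the top-degree coefficient $d_q(q)$ is never reached by the induction and must be killed by a separate generating-function computation (Lemma~\ref{lem: k=q vanish}, showing $d_q(q)=0$ for $q\ge2$ via $\bigl(\sum_l\binom{2l}{l}\tfrac{x^l}{2l-1}\bigr)^2=1-4x$). Whether you telescope with $y$ symbolic or per coefficient, you should anticipate such a degenerate case rather than assuming the recurrence marches forward from finitely many initial values; fortunately the generating-function trick you already deploy at $y^0$ is precisely the tool that resolves it.
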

We shall therefore concentrate on proving Proposition~\ref{prop: P_q vanish}.

\subsection{Reduction to a combinatorial identity}

For $z\in\mathbb{R}$ and $k\in\mathbb{Z}$, we use the standard notation $(z)_{k}$ for the rising factorial Pochhammer symbol
$$
(z)_{k}=z(z+1)\cdot\cdots\cdot(z+k-1)=\frac{\Gamma(z+k)}{\Gamma(z)}
$$
where the second equality holds for $z$ not a non-positive integer.
We next reformulate Proposition~\ref{prop: P_q vanish} in terms of the purely hypergeometric terms
\begin{equation*}
H_q(l_{1},l_{2},k)=\frac{(-1)^{l_1+l_2}\left(-\frac{1}{2}\right)_{l_1} \cdot \left(-\frac{1}{2}\right)_{l_2}\cdot \left(\frac{1}{2}\right)_{q-l_{1}}\cdot \left(\frac{1}{2}\right)_{q-l_{2}}}{(2q-l_{1}-l_{2}-k)!(l_{2}-l_{1}+k)!(l_{1}-l_{2}+k)!(l_{1}+l_{2}-k)!}
\end{equation*}
and
\begin{equation*}
H'_q(l_{1},l_{2},k)= (2q-l_1-l_2-k) H_q(l_1, l_2, k),
\end{equation*}
in order to be able to apply Zeilberger's algorithm in Section~\ref{subsubsec: proof of first identity}.
We note that $H_q, H'_q$ are defined for every $k, l_1, l_2 \in \Z$, by expressing everything in terms of the Gamma function.

\begin{prop} \label{prop:For-all-,sum(H_k)} For all $q\ge1$ we have
\begin{enumerate}[label={\rm (\alph*)}]
\item \label{prop: sum H_k a}
\begin{equation*}
  \sum_{l_{1},l_{2}}H_q(l_{1},l_{2},k)=\begin{cases}
0, & \text{for }k\ge 2,\\
2^{-4q}(2q-1)c_q, & \text{for }k=1,\\
2^{-4q}c_q, & \text{for }k=0.
\end{cases}
\end{equation*}
\item \label{prop: sum H_k b}
\begin{equation*}
\sum_{l_{1},l_{2}}H'_q(l_{1},l_{2},k)=\begin{cases}
0, & \text{for }k\ge 2,\\
2^{-4q}(2q-1)(2q-2)c_q, & \text{for }k=1,\\
2^{-4q}(2q-1)c_q , & \text{for }k=0.
\end{cases}
\end{equation*}
\end{enumerate}
\end{prop}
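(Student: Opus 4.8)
The plan is to establish Proposition~\ref{prop:For-all-,sum(H_k)} --- which, via Lemma~\ref{lem: homo} and the reduction already carried out, is equivalent to Proposition~\ref{prop: divisibility} --- by treating the three ranges of $k$ separately and exploiting that the summation region in $(l_1,l_2)$ degenerates for small $k$. The terms $H_q(l_1,l_2,k)$ and $H'_q(l_1,l_2,k)$ are proper hypergeometric in the four integer variables $q,l_1,l_2,k$ and, with the convention $1/n!=0$ for negative integers $n$, have finite support in $(l_1,l_2)$ for each fixed $(q,k)$; this is what makes creative telescoping applicable.

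\textbf{The cases $k=0$ and $k=1$.} When $k=0$ the factors $(l_2-l_1)!$ and $(l_1-l_2)!$ in the denominator force $l_1=l_2=:l$, and the remaining constraints give $0\le l\le q$, so the double sum collapses to $\sum_{l=0}^q H_q(l,l,0)$. Using $(\tfrac12)_l = (2l)!/(4^l\,l!)$ and $(-\tfrac12)_l = -(\tfrac12)_l/(2l-1)$, one simplifies this term to $2^{-4q}(2l-1)^{-2}\binom{2l}{l}\binom{2q-2l}{q-l}$, so the sum equals $2^{-4q}c_q$ by Lemma~\ref{lem: c_q}; the same manipulations handle part (b) at $k=0$ (or one applies Zeilberger's algorithm in $q$ to the resulting single sum). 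For $k=1$ the condition $|l_1-l_2|\le 1$ splits the double sum into the diagonal $l_1=l_2$ and the two off-diagonals $l_1=l_2\pm1$, each a single sum; I would evaluate these as above to obtain $2^{-4q}(2q-1)c_q$ and $2^{-4q}(2q-1)(2q-2)c_q$ for parts (a) and (b) respectively.

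\textbf{The case $k\ge 2$.} Here there is genuine cancellation and I would invoke multivariate creative telescoping (the Apagodu--Zeilberger method \cite{AZ06}) to sum out $l_1$ and $l_2$. Writing $\Delta_{l_i}g$ for the forward difference in $l_i$, the goal is to produce rational certificates $R_1,R_2$ in $(q,l_1,l_2,k)$ for which the term identity
\[
H_q(l_1,l_2,k) = \Delta_{l_1}\!\bigl[R_1\,H_q(l_1,l_2,k)\bigr] + \Delta_{l_2}\!\bigl[R_2\,H_q(l_1,l_2,k)\bigr]
\]
holds for $k\ge 2$; summing over $l_1,l_2\in\Z$ then makes the right-hand side telescope to $0$ (all supports being finite), giving $\sum_{l_1,l_2}H_q(l_1,l_2,k)=0$. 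Verifying the displayed identity, once the certificates are known, is a mechanical rational-function check, to be carried out in Section~\ref{subsubsec: proof of first identity}. The same scheme applies to $H'_q$, since multiplying $H_q$ by $(2q-l_1-l_2-k)$ merely decrements the first factorial and keeps the term proper hypergeometric; alternatively part (b) follows from part (a) by expressing $\sum_{l_1,l_2}H'_q(l_1,l_2,k)$ through $\sum_{l_1,l_2}H_q(l_1,l_2,k)$ and a $q$-shifted copy of the same sum. Reading off the coefficients of $x^{2q-1}z$ and $x^{2q-2}y^2$ in $P_q(-1,y,1)$ and $\frac{\partial P_q}{\partial x}(-1,y,1)$ from \eqref{eq : polys with correction} then yields Proposition~\ref{prop: P_q vanish}, hence Proposition~\ref{prop: divisibility}.

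\textbf{Main obstacle.} The delicate point is the $k\ge 2$ step: finding the correct pair of certificates $R_1,R_2$ and verifying the telescoping identity is a bulky (though routine) computation, and one must track the summation range carefully, so that the telescoped double sum genuinely vanishes for $k\ge 2$ while it is precisely the boundary of the support --- the configurations excluded once $k\ge 2$ --- that accounts for the nonzero right-hand sides at $k=0$ and $k=1$. As a consistency check, one can verify the recurrence in $k$ coming from the telescoping against the piecewise closed form, with $k=0,1$ as initial data.
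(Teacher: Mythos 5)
Your treatment of $k=0$ is sound: the factors $(l_1-l_2)!$ and $(l_2-l_1)!$ do force $l_1=l_2$, and the collapsed sum is exactly the one evaluated in Lemma~\ref{lem: c_q}, giving $2^{-4q}c_q$. But the heart of the proposition is the vanishing for $k\ge 2$, and there your argument rests on an existence claim you neither verify nor justify: that for $k\ge 2$ the summand $H_q(l_1,l_2,k)$ is itself a total difference, $H_q=\Delta_{l_1}[R_1H_q]+\Delta_{l_2}[R_2H_q]$, for some rational certificates $R_1,R_2$. Creative telescoping guarantees a telescoper of \emph{some} order in an external parameter, not of order zero; nothing in the Apagodu--Zeilberger theory promises that a sum which happens to vanish has a Gosper-style certificate at fixed $q$ and $k$. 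The paper does not find one either: its certificate realises a \emph{second-order recurrence in $q$} for $S_q(k)$ (first-order for $S'_q$), valid for $k\ne q+2$, and the vanishing for $k\ge2$ is then obtained by induction on $q$ from the base cases $q=1,2$, with the excluded boundary value $k=q$ handled by a separate generating-function computation ($\sum_q d_q(q)x^q=1-4x$, Lemma~\ref{lem: k=q vanish}). Until you either exhibit your zeroth-order certificates and check their pole structure (they would have to be singular precisely at $k=0,1$ to be consistent with the nonzero values there), or fall back on a recurrence-plus-induction scheme like the paper's, the $k\ge2$ case is not proved.

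Two smaller points. First, the $k=1$ evaluation is asserted but not carried out: the three single sums along $l_1-l_2\in\{-1,0,1\}$ are not instances of Lemma~\ref{lem: c_q} and would each need their own identity (the paper avoids this by extracting the $k=0,1$ values from its $q$-recurrence together with the inductive hypothesis). Second, your suggested shortcut for part (b) --- writing $\sum H'_q$ as a combination of $\sum H_q$ and a $q$-shifted copy --- does not work as stated: multiplying by $(2q-l_1-l_2-k)$ decrements only one factorial, whereas shifting $q$ also changes the Pochhammer factors $(\tfrac12)_{q-l_1}(\tfrac12)_{q-l_2}$, so $H'_q$ is not a linear combination of $H_{q'}$ over shifts of $q$ with coefficients independent of $l_1,l_2$. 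The paper instead runs a separate (first-order) telescoping argument for $H'_q$.
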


\begin{proof} [Proof that Proposition~\ref{prop:For-all-,sum(H_k)} is equivalent to Proposition~\ref{prop: P_q vanish}]
 A rearrangement of the terms in \eqref{eq : polys with correction} yields
\begin{equation}\label{eq: Pq and Sq rel}
P_{q}(-1,y,1)=\sum_{k=0}^{q}(-1)^kd_{q}(k)\cdot y^{2k}+c_{q}\left((2q-1)y^{2}-1\right),
\end{equation}
where
\begin{equation*}
d_{q}(k)=\sum_{\substack{k\le l_{1}+l_{2}\le2q-k\\|l_{1}-l_{2}|\le k}}a_{q}(l_{1})a_q(l_{2})b_{q}(l_{1},l_{2},l_{1}+l_{2}-k)\cdot(-1)^{l_{1}+l_{2}}.
\end{equation*}
Similarly, one obtains
\begin{equation*}
\frac{\partial P_{q}}{\partial x}(-1,y,1)=\sum_{k=0}^{q}(-1)^kd'_{q}(k)\cdot y^{2k}+c_{q}(2q-1)\left(1-(2q-2)y^{2}\right)
\end{equation*}
where
\begin{equation*}
d'_{q}(k)=\sum_{\substack{k\le l_{1}+l_{2}\le2q-k\\|l_{1}-l_{2}|\le k}}(2q-l_{1}-l_{2}-k)\cdot a_{q}(l_{1})a_q(l_{2})b_{q}(l_{1},l_{2},l_{1}+l_{2}-k)\cdot(-1)^{l_{1}+l_{2}-1}.
\end{equation*}
It is therefore enough to prove that
\begin{equation*}
(-1)^{l_1+l_2}a_{q}(l_{1})a_q(l_{2})b_{q}(l_{1},l_{2},l_{1}+l_{2}-k) = 2^{4q} H_q(l_1, l_2, k),
\end{equation*}
which is easily verified by standard algebraic manipulations.
\end{proof}

\subsection{Proof of Proposition~\ref{prop:For-all-,sum(H_k)}~\ref{prop: sum H_k a}}
\label{subsubsec: proof of first identity}

We will use the multivariate
Zeilberger algorithm for multi-sum recurrences of hypergeometric terms
(see \cite{AZ06} and \cite{Kbook}*{Chapters 6 and 7}).
For convenience we write
\begin{equation*}
S_q(k) =  \sum_{l_{1},l_{2}}H_q(l_{1},l_{2},k)= 2^{-4q} d_q(k).
\end{equation*}
First, we will handle the case where $k=q$.
\begin{lem}
\label{lem: k=q vanish}
For all $q \ge 2$ we have $S_q(q) = 0$.
\end{lem}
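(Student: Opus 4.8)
The plan is to evaluate $S_q(q)$ explicitly, by first collapsing the double sum to a one–parameter convolution and then recognising the associated generating function.

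First I would set $k=q$ in the definition of $H_q$ and exploit the convention $1/n!=0$ for negative integers $n$. With $k=q$ the four factorials in the denominator of $H_q(l_1,l_2,q)$ become $(q-l_1-l_2)!$, $(l_2-l_1+q)!=(2l_2)!$, $(l_1-l_2+q)!=(2l_1)!$ and $(l_1+l_2-q)!$. The first and last are finite only when $l_1+l_2=q$, and the middle two are finite only when $l_1\ge 0$ and $l_2\ge 0$; hence, whatever the nominal range of summation, only the terms with $l_2=q-l_1$ and $0\le l_1\le q$ contribute. Substituting $l_2=q-l_1$ (so that $q-l_1=l_2$ and $q-l_2=l_1$) and writing $l$ for $l_1$, the $0!$'s disappear and I obtain
\[
S_q(q)=(-1)^q\sum_{l=0}^{q}u_l\,u_{q-l},\qquad u_l:=\frac{\left(-\tfrac12\right)_l\left(\tfrac12\right)_l}{(2l)!},
\]
so that $S_q(q)$ equals $(-1)^q$ times the coefficient of $x^q$ in $\bigl(\sum_{l\ge 0}u_l x^l\bigr)^2$.

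Next I would put $U(x):=\sum_{l\ge 0}u_l x^l$ into closed form. Using the elementary identities $\left(-\tfrac12\right)_l=\frac{-1}{2l-1}\left(\tfrac12\right)_l$ and $\left(\tfrac12\right)_l=\frac{(2l)!}{4^l l!}$ one simplifies $u_l=\frac{-1}{2l-1}\binom{2l}{l}16^{-l}$. Then the classical expansion $\sqrt{1-4t}=\sum_{l\ge 0}\frac{-1}{2l-1}\binom{2l}{l}t^l$ (which one verifies by differentiating and invoking $\frac{1}{\sqrt{1-4t}}=\sum_{l\ge0}\binom{2l}{l}t^l$) gives, with $t=x/16$,
\[
U(x)=\sqrt{1-\tfrac{x}{4}},\qquad\text{hence}\qquad U(x)^2=1-\tfrac{x}{4}.
\]

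Finally, since $U(x)^2$ is a polynomial of degree $1$, its coefficient of $x^q$ vanishes for every $q\ge 2$, and therefore $S_q(q)=0$ for $q\ge 2$, which is the assertion of the lemma. This argument is short and essentially obstacle–free; the only points needing a little care are the justification that the summand vanishes off the line $l_1+l_2=q$ with $l_1,l_2\ge0$ (immediate from the factorial convention) and the Pochhammer–to–binomial simplification together with the generating–function identity, both of which are routine.
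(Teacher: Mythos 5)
Your proof is correct and follows essentially the same route as the paper: after restricting the sum to the line $l_1+l_2=q$, both arguments identify $S_q(q)$ (up to the harmless factor $(-1)^q$ and the normalisation $2^{-4q}$) as the $q$-th coefficient of the square of the generating function $\sum_l \frac{-1}{2l-1}\binom{2l}{l}t^l=\sqrt{1-4t}$, whose square is a linear polynomial. The only cosmetic difference is that you work directly with the hypergeometric form $H_q$ and the variable $x/16$, while the paper works with $d_q(q)=2^{4q}S_q(q)$ expressed through $a_q$ and $b_q$; the content is identical.
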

\begin{proof}
We have
\begin{align*}
d_q(q) = \sum_{l=0}^{q} a_q(l) a_q(q-l) b_q(l,q-l,0) &=
\frac{1}{(q!)^2} \sum_{l=0}^q {\binom{q}{l}}^2 \cdot \frac{(2q-2l)!(2l)!}{(2l-1)(2q-2l-1)} \\
&=\sum_{l=0}^q \binom{2q-2l}{q-l}  \frac{1}{2q-2l-1} \cdot \binom{2l}{l} \frac{1}{2l-1}.
\end{align*}
We write $ \phi(x) = \sum_{l=0}^{\infty} \binom{2l}{l} \frac{1}{2l-1} x^l = - \sqrt{1-4x}$. Then $\sum_{q=0}^{\infty} d_q(q) x^q = \phi(x)^2 = 1-4x$, showing that $d_q(q) = 0$ for all $q \ge 2$, whence the claim.
\end{proof}

Next, we prove a recurrence relation for $S_q(k)$.
\begin{lem}\label{lem:recursion_formula}
For all $q\ge1$ and all $k \ne q+2$ we have
\begin{equation*}
\frac{q^{2}}{8(2k-2q-3)(k-q-2)}S_{q}(k)+\frac{4kq-4q^{2}+2k-7q-4}{4(2k-2q-3)(k-q-2)}\cdot S_{q+1}(k)+S_{q+2}(k)=0.
\end{equation*}
\end{lem}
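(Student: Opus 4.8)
\textbf{Proof strategy for Lemma~\ref{lem:recursion_formula}.}
The recurrence is of the kind established by creative telescoping, and the plan is to apply the multivariate Zeilberger algorithm of \cite{AZ06} (see also \cite{Kbook}*{Chapters 6 and 7}) to the summand $H_q(l_1,l_2,k)$, treating $k$ as a free parameter, $q$ as the recurrence variable, and $l_1,l_2$ as the summation variables. Two structural facts make this legitimate. First, expressed via Gamma functions, $H_q(l_1,l_2,k)$ is a proper hypergeometric term in $(q,k,l_1,l_2)$: all upper indices of its Pochhammer symbols and all arguments of its factorials are integer linear combinations of the variables, and the only exponential factor is $(-1)^{l_1+l_2}$. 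Hence the algorithm applies and is guaranteed to terminate. Second, for fixed $q$ and $k$ the term $H_q(l_1,l_2,k)$ is supported on a \emph{finite} subset of $\Z^2$: the four factorials in the denominator force $k\le l_1+l_2\le 2q-k$ and $|l_1-l_2|\le k$, which confines $(l_1,l_2)$ to a bounded region (one checks these inequalities in fact imply $0\le l_1,l_2\le q$, so that $\sum_{(l_1,l_2)\in\Z^2}H_q(l_1,l_2,k)=S_q(k)$). In particular, for any finitely supported $F$ on $\Z$ we have $\sum_{m\in\Z}\big(F(m+1)-F(m)\big)=0$.

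Next I would run the algorithm to obtain polynomials $a_0(q,k),a_1(q,k),a_2(q,k)$ together with rational certificates $R_1,R_2$ such that the term-wise identity
\[
a_0(q,k)H_q+a_1(q,k)H_{q+1}+a_2(q,k)H_{q+2}=\Delta_{l_1}\!\big(R_1 H_q\big)+\Delta_{l_2}\!\big(R_2 H_q\big)
\]
holds identically in $(l_1,l_2,q,k)$, where $\Delta_{l_i}$ is the forward difference in $l_i$ and $H_j=H_j(l_1,l_2,k)$. With the explicit $a_i$ and $R_i$ in hand this is a finite rational-function verification: after clearing Gamma functions by the relations $(z)_{m+1}=(z+m)(z)_m$ and $m!=m\,(m-1)!$, both sides reduce to one and the same rational function times a common hypergeometric factor, so the check is independent of the algorithm. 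Summing the identity over all $(l_1,l_2)\in\Z^2$ and invoking the telescoping remark annihilates the right-hand side and gives $a_0 S_q(k)+a_1 S_{q+1}(k)+a_2 S_{q+2}(k)=0$. The computation yields, after removing a common content factor,
\[
q^2 S_q(k)+2\big(4kq-4q^2+2k-7q-4\big)S_{q+1}(k)+8(2k-2q-3)(k-q-2)S_{q+2}(k)=0,
\]
and dividing through by $8(2k-2q-3)(k-q-2)$ produces exactly the asserted normalised recurrence.

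I expect the only genuinely delicate points to be the boundary bookkeeping rather than the algebra. In the telescoping step one must know that $R_1H_q$ and $R_2H_q$ are again finitely supported on $\Z$; since $H_q$ is finitely supported this can fail only if a denominator of $R_i$ cancels one of the denominator factorials of $H_q$ at a lattice point where $H_q$ is nonzero, so one checks that the poles of $R_1,R_2$ lie only at arguments already constrained to be negative integers, where $H_q$ (hence $R_iH_q$) vanishes. And in passing to the normalised form one divides by $a_2(q,k)=8(2k-2q-3)(k-q-2)$; since $2k-2q-3$ is odd it never vanishes for integer $q,k$, so the sole excluded case is $k-q-2=0$, i.e. $k=q+2$, which is precisely the hypothesis of the lemma.
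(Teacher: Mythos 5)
Your proposal is correct and follows essentially the same route as the paper: the paper also proves this by running the multivariate Zeilberger algorithm to obtain certificates $R^{(1)}_q,R^{(2)}_q$, verifying the resulting term-wise rational-function identity directly, multiplying by $H_q$, checking that the products $G^{(i)}_q=R^{(i)}_qH_q$ remain well defined (and eventually vanishing) at the poles, and summing over $(l_1,l_2)$ so that the right-hand side telescopes to zero. The only difference is that the paper records the explicit certificate polynomials, whereas you defer their computation to the algorithm; your support analysis, the telescoping step, and the observation that $2k-2q-3$ is odd so only $k=q+2$ must be excluded all match the paper's argument.
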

\begin{proof}
Let us begin by defining some rational functions in 4 variables. Let
\begin{align*}
Q_q^{(1)}(l_1,l_2,k) &
=4q^2 (l_1-l_2-k)  +4 q k^3
+  8qk^2(4-l_1-l_2)
\\ &\qquad
+4q k \left(l_1-l_2\right)^2 + 2q k(4l_2+14l_2 -11)
+2q (2l_1 l_2 +3l_1-9l_2+3)
\\&\qquad
+4k(2k+1)(3-2l_1)(2l_2-1)
+12l_1l_2-6l_1-18l_2+9,\\
Q_q^{(2)}(l_1,l_2,k) &=8l_1^{2}k-4l_1^{2}q+4l_1l_2q-12l_1kq+4l_1q^{2}-4l_2q^{2}+4kq^{2}-8l_1^{2}+4l_1l_2-12l_1k \\
&\qquad+10l_1q-6l_2q+10kq+6l_1-2l_2+4k-2q-1
\end{align*}
and
\begin{equation*}
Q_q(l_1,l_2,k)=32k(2k-2q-3)(k-q-2)\cdot(2q-l_1-l_2-k+1)_4.
\end{equation*}
Define also
\begin{equation*}
  R_q^{(1)}(l_1,l_2,k)=\frac{Q_q^{(1)}(l_1,l_2,k)(1/2+q-l_1)(l_1+l_2-k)(l_1-l_2+k)}{Q_q(l_1,l_2,k)}
\end{equation*}
and
\begin{equation*}
  R_q^{(2)}(l_1,l_2,k)=-\frac{Q_q^{(2)}(,l_1,l_2,k)(1/2+q-l_2)(l_1+l_2-k)(l_2-l_1+k)}{Q_q(l_1,l_2,k)}.
\end{equation*}

Applying Zeilberger's algorithm yields the following identity
of rational functions, which can be verified directly by expanding (and should be interpreted in the usual way at the poles):

\begin{align*}
  \frac{q^{2}}{8(2k-2q-3)(-q+k-2)}&+\frac{4kq-4q^{2}+2k-7q-4}{4(2k-2q-3)(-q+k-2)}\cdot\frac{H_{q+1}(l_1,l_2,k)}{H_q(l_1,l_2,k)}+\frac{H_{q+2}(l_1,l_2,k)}{H_q(l_1,l_2,k)}\\
  &=R_q^{(1)}(l_1+1,l_2,k)\cdot\frac{H_q(l_1+1,l_2,k)}{H_q(l_1,l_2,k)}-R_q^{(1)}(l_1,l_2,k) \\
  &\qquad+R_q^{(2)}(l_1,l_2+1,k)\cdot\frac{H_q(l_1,l_2+1,k)}{H_q(l_1,l_2,k)}-R_q^{(2)}(l_1,l_2,k).
\end{align*}
Therefore, after multiplying both sides by $H_q(l_1,l_2,k)$, one gets
\begin{align*}
\frac{q^{2} H_q(l_1,l_2,k)}{8(2k-2q-3)(-q+k-2)} & +\frac{4kq-4q^{2}+2k-7q-4}{4(2k-2q-3)(-q+k-2)}H_{q+1}(l_1,l_2,k)+H_{q+2}(l_1,l_2,k)\\
&=G_q^{(1)}(l_1+1,l_2,k)-G_q^{(1)}(l_1,l_2,k)+G_q^{(2)}(l_1,l_2+1,k)-G_q^{(2)}(l_1,l_2,k),
\end{align*}
where $G_q^{(1)}(l_1,l_2,k)=R_q^{(1)}(l_1,l_2,k)\cdot H_q(l_1,l_2,k)$, and $G_q^{(2)}(l_1,l_2,k)=R_q^{(2)}(l_1,l_2,k)\cdot H_q(l_1,l_2,k)$. Tedious but routine manipulations show that $G_q^{(1)}$ and $G_q^{(2)}$ are well-defined at the poles of $R_q^{(1)}$ and $R_q^{(2)}$.
We can now sum over all $l_1,l_2$ on both sides, noting that $H_q$ (and therefore $G_q^{(1)}$ and $G_q^{(2)}$) vanish for $|l_1|$ or $|l_2|$ sufficiently large, and get
$$
\frac{q^{2}}{8(2k-2q-3)(-q+k-2)}S_{q}(k)+\frac{4kq-4q^{2}+2k-7q-4}{4(2k-2q-3)(-q+k-2)}\cdot S_{q+1}(k)+S_{q+2}(k)=0,
$$
as claimed.
\end{proof}
Now Proposition~\ref{prop:For-all-,sum(H_k)} easily follows from Lemma~\ref{lem:recursion_formula}, by induction.
\begin{proof}[Proof of Proposition~\ref{prop:For-all-,sum(H_k)}~(a)]

We proceed by induction on $q$. For the base case note that
\begin{equation*}
P_{1}(x,y,z)=2(x+z)^{2}
\end{equation*}
whence, recalling \eqref{eq: Pq and Sq rel} and the relation $S_q(k) = 2^{-4q} d_q(k)$,
\begin{equation*}
\left(S_{1}(0)-\frac{4}{2^{4}}\right)+\left(\frac{4}{2^{4}}-S_{1}(1)\right)y^{2}+\sum_{k\ge2}S_{1}(k)y^{2k}=\frac{1}{2^{4}}P_{1}(-1,y,1)=0.
\end{equation*}
This implies that
\begin{equation*}
S_{1}(k)=\begin{cases}
0, & \text{for }k\ge2,\\
\frac{1}{4}, & \text{for }k=1,\\
\frac{1}{4}, & \text{for }k=0,
\end{cases}
\end{equation*}
which is exactly the case $q=1$.
Similarly, one verifies the formula for $q=2$.

Using now Lemma~\ref{lem:recursion_formula}, it is clear that we have $S_{q+2}(k)=0$ for all $2\le k<q+2$.
By Lemma~\ref{lem: k=q vanish}, this also holds for $k=q+2$. By definition, $S_{q+2}(k)=0$ for $k > q+2$. It remains to consider the cases $k=0,1$. Assume that
\begin{equation*}
\begin{array}{cc}
S_{q}(0)=2^{-4q} c_q, &S_{q}(1)= 2^{-4q}(2q-1)c_q  \\
S_{q+1}(0)=2^{-4(q+1)} c_{q+1}, &S_{q+1}(1)= 2^{-4(q+1)}(2q+1)c_{q+1} .
\end{array}
\end{equation*}
Then from Lemma~\ref{lem:recursion_formula} we have
\begin{align*}
-S_{q+2}(0)&=\frac{q^{2}}{8(2q+3)(q+2)}\cdot\frac{1}{2q\cdot \binom{2q}{q}}-\frac{4q^{2}+7q+4}{4(2q+3)(q+2)}\cdot\frac{1}{(2q+2)\cdot \binom{2q+2}{q+1}} \\
&=-\frac{1}{2(q+2)\cdot \binom{2q+4}{q+2}}
\end{align*}
and similarly
\begin{align*}
-S_{q+2}(1)&=\frac{q^{2}}{8(2q+1)(q+1)}\cdot\frac{2q-1}{2q\cdot \binom{2q}{q}}-\frac{4q^{2}+3q+2}{4(2q+1)(q+1)}\cdot\frac{2q+1}{(2q+2)\cdot \binom{2q+2}{q+1}}\\
&=-\frac{1}{4 \binom{2q+2}{q+1}}
\end{align*}
as claimed.
\end{proof}

\subsection{Proof of Proposition~\ref{prop:For-all-,sum(H_k)}~\ref{prop: sum H_k b}}

The development is very similar to that of the previous section, and we shall accordingly give less detail. We define
\begin{equation*}
S'_q(k) = \sum_{l_{1},l_{2}}H'_q(l_{1},l_{2},k)
\end{equation*}
and notice that $S'_q(k) = 2^{-4q} d'_q(k)$. We begin with a recurrence relation, similar to before.
\begin{lem}
For all $q\ge1$ and all $k \ne 2q-1$, we have
\begin{equation*}
\frac{q(k-2q-1)}{2(2k-2q-1)(k-2q+1)}S'_{q}(k)+S'_{q+1}(k)=0.
\end{equation*}
\end{lem}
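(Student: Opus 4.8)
The plan is to mirror the proof of Lemma~\ref{lem:recursion_formula}: apply the multivariate Zeilberger (creative telescoping) algorithm to the hypergeometric summand $H'_q(l_1,l_2,k)$, now viewed as a function of the outer variable $q$, with $k$ a parameter and $l_1,l_2$ the summation indices. Since the target recurrence is of first order in $q$, the algorithm produces rational functions $R_q^{(1)}(l_1,l_2,k)$ and $R_q^{(2)}(l_1,l_2,k)$ — each an explicit quotient of polynomials whose denominator carries the factors $(2k-2q-1)$, $(k-2q+1)$ and a shifted Pochhammer block in $(2q-l_1-l_2-k)$, analogous to the function $Q_q$ of the previous section — together with the telescoping identity
\begin{equation*}
\frac{q(k-2q-1)}{2(2k-2q-1)(k-2q+1)}\,H'_q(l_1,l_2,k)+H'_{q+1}(l_1,l_2,k)=G_q^{(1)}(l_1+1,l_2,k)-G_q^{(1)}(l_1,l_2,k)+G_q^{(2)}(l_1,l_2+1,k)-G_q^{(2)}(l_1,l_2,k),
\end{equation*}
where $G_q^{(i)}=R_q^{(i)}\cdot H'_q$.

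The remainder of the argument is then routine, exactly as before. First one verifies the displayed identity as an identity of rational functions, by clearing denominators and expanding (interpreting everything in the usual way at the poles). Next, one checks, by tedious but routine manipulation, that $G_q^{(1)}$ and $G_q^{(2)}$ extend to well-defined quantities at the apparent poles of $R_q^{(1)}$ and $R_q^{(2)}$, the vanishing of $H'_q$ there compensating for the poles of the certificates. Finally one sums both sides over all $l_1,l_2\in\Z$: since $H'_q(l_1,l_2,k)$ — and hence $G_q^{(1)}$, $G_q^{(2)}$ — vanishes whenever $|l_1|$ or $|l_2|$ is large (the arguments of the factorials in its denominator become negative), the right-hand side telescopes to $0$, while the left-hand side becomes
\begin{equation*}
\frac{q(k-2q-1)}{2(2k-2q-1)(k-2q+1)}\,S'_q(k)+S'_{q+1}(k),
\end{equation*}
which therefore vanishes, giving the lemma. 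The hypothesis $k\neq 2q-1$ is precisely what guarantees the denominator $2(2k-2q-1)(k-2q+1)$ is nonzero: for integer $k,q$ the factor $2k-2q-1$ is odd, hence automatically nonzero, so $k=2q-1$ is the only value to exclude.

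The only real obstacle is bookkeeping: pinning down the numerator polynomials of $R_q^{(1)}$ and $R_q^{(2)}$ and then confirming, by a finite but lengthy calculation, both the rational-function identity above and the regularity of $G_q^{(1)}$, $G_q^{(2)}$ at the poles. One might hope to sidestep rerunning the algorithm by exploiting the relation $H'_q(l_1,l_2,k)=(2q-l_1-l_2-k)H_q(l_1,l_2,k)$ to transfer the telescoping already established for $H_q$, but the extra linear factor interacts with the shifts in $l_1$ and $l_2$, so this does not obviously shorten the computation; applying Zeilberger directly to $H'_q$ seems cleanest. Once the recurrence is in hand, Proposition~\ref{prop:For-all-,sum(H_k)}~\ref{prop: sum H_k b} follows by induction on $q$ as in the previous subsection, using the base cases $q=1,2$ and propagating the vanishing for $k\ge 2$ together with the explicit values at $k=0,1$.
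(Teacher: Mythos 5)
Your proposal follows exactly the paper's route: apply the multivariate Zeilberger algorithm to $H'_q$ with $l_1,l_2$ as the summation variables, obtain a pair of rational certificates, verify the resulting telescoping identity as an identity of rational functions together with the regularity of $G^{(1)}_q$, $G^{(2)}_q$ at the apparent poles, and then sum over $l_1,l_2$ so that the right-hand side collapses and the left-hand side becomes the claimed recurrence. Your side observations are also sound: $2k-2q-1$ is odd and hence nonzero for integers, so $k=2q-1$ (which kills $k-2q+1$) is indeed the only value that must be excluded, and the relation $H'_q=(2q-l_1-l_2-k)H_q$ does not obviously let one recycle the certificate from part (a).

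The one point to flag is that in a proof of this kind the certificate \emph{is} the mathematical content, and you have not produced it. The paper exhibits the numerator polynomials $Q'_1,Q'_2$ and the denominator $Q'$ explicitly, after which the telescoping identity reduces to a finite polynomial verification. Until that computation is carried out, your argument shows only that the lemma would follow \emph{if} creative telescoping succeeds with a first-order recurrence whose leading coefficient is exactly $\frac{q(k-2q-1)}{2(2k-2q-1)(k-2q+1)}$ --- which is not guaranteed a priori: for part \ref{prop: sum H_k a} of Proposition~\ref{prop:For-all-,sum(H_k)} the analogous recurrence (Lemma~\ref{lem:recursion_formula}) turned out to be of second order. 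So the plan is correct and coincides with the paper's, but it remains a plan rather than a complete proof until the certificates are written down and checked.
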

\begin{proof}
This time we define
\begin{align*}
Q'_{1}(q,l_1,l_2,k)&=2l_2k-4kq+4l_2q-k+2l_2-2q-1, \\
Q'_{2}(q,l_1,l_2,k)&=2k^{2}-2l_2k-4l_2q+3k-2l_2+2q+1
\end{align*}
and
\begin{equation*}
Q'(q,l_1,l_2,k)=4k(2k-2q-1)(k-2q+1)(2q-l_1-l_2-k)(2q-l_1-l_2-k+1).
\end{equation*}
Define also
\begin{equation*}
  R'_{1}(q,l_1,l_2,k)=\frac{Q'_{1}(q,l_1,l_2,k)(1/2+q-l_1)(l_1+l_2-k)(l_1-l_2+k)}{Q^{'}(q,l_1,l_2,k)}
\end{equation*}
and
\begin{equation*}
  R'_{2}(q,l_1,l_2,k)=\frac{Q'_{2}(q,l_1,l_2,k)(1/2+q-l_2)(l_1+l_2-k)(l_2-l_1+k)}{Q^{'}(q,l_1,l_2,k)}.
\end{equation*}
Applying Zeilberger's algorithm again yields
\begin{align*}
\frac{q(k-2q-1)}{2(2k-2q-1)(k-2q+1)}H'_q(l_1,l_2,k)+H'_{q+1}(l_1,l_2,k) &= G'_{1}(q,l_1+1,l_2,k)-G'_{1}(q,l_1,l_2,k) \\
&+G'_{2}(q,l_1,l_2+1,k)-G'_{2}(q,l_1,l_2,k)
\end{align*}
where $G'_{1}(q,l_1,l_2,k)=R'_{1}(q,l_1,l_2,k)\cdot H'_q(l_1,l_2,k)$,
and $G'_{2}(q,l_1,l_2,k)=R'_{2}(q,l_1,l_2,k)\cdot H'_q(l_1,l_2,k)$.
We can now sum over all $l_1,l_2$ on both sides and get the result.
\end{proof}
Proposition~\ref{prop:For-all-,sum(H_k)}~(b) now follows by induction, as before.

%\jerry{Should check if any of the arxiv references have been published}
\begin{bibdiv}
\begin{biblist}

\bib{AL}{article}{
   author={Ancona, Michele},
   author={Letendre, Thomas},
   title={Zeros of smooth stationary Gaussian processes},
   journal={Electron. J. Probab.},
   volume={26},
   date={2021},
   pages={Paper No. 68, 81},
   % doi={10.1214/21-ejp637},
}

\bib{AZ06}{article}{
  title = {Multi-variable Zeilberger and Almkvist--Zeilberger algorithms and the sharpening of Wilf--Zeilberger theory},
  author = {Apagodu, M.},
  author =	{Zeilberger, D.},
  journal  = {Advances in Applied Mathematics},
  date  = {2006},
  number = {2},
  pages = {139--152},
  volume = {37},
  publisher = {Academic Press}
}

\bib{A}{article}{
	author={Arcones, M. A.},
	title={Limit theorems for nonlinear functionals of a stationary Gaussian sequence of vectors},
	journal={The Annals of Probability},
	date={1994},
	volume={22 (4)},
	pages={2242--2272}
	}

\bib{Boas}{article}{
   author={Boas, R. P., Jr.},
   title={Lipschitz behavior and integrability of characteristic functions},
   journal={Ann. Math. Statist.},
   volume={38},
   date={1967},
   pages={32--36},
   %issn={0003-4851},
%   review={\MR{207006}},
%   doi={10.1214/aoms/1177699056},
}

\bib{BF}{article}{
   author={Buckley, Jeremiah},
   author={Feldheim, Naomi},
   title={The winding of stationary Gaussian processes},
   journal={Probab. Theory Related Fields},
   volume={172},
   date={2018},
   number={1-2},
   pages={583--614},
%   issn={0178-8051},
%   review={\MR{3851839}},
%   doi={10.1007/s00440-017-0816-7},
}

\bib{BM}{article}{
	author={Breuer, J.},
	author={Major, P.},
	title= {Central limit theorems for non-linear functionals of Gaussian fields},
	journal={J. Multivar. Anal.},
	number={13},
	pages={425--444},
	date={1983}
	}

%\bib{C}{article}{
%  	title={Ueber die F{\"a}lle, wenn die Reihe von der Form y= etc. ein Quadrat von der Form z= etc. hat.},
%  	author={Clausen, Th},
%  	journal={Journal f{\"u}r die reine und angewandte Mathematik},
%  	volume={1828},
%  	number={3},
%  	pages={89--91},
%  	year={1828},
%  	publisher={De Gruyter}
%}

%\bib{CL}{book}{
%	author={Cram\'{e}r, H.},
%	author={Leadbetter, M. R.},
%	title={Stationary and related stochastic processes: sample function properties and their applications},
%	publisher={Dover publications},
%	year ={2004},
%	note = {(first published in 1967 by Wiley series)}
%}

\bib{CL}{book}{
   author={Cram\'{e}r, Harald},
   author={Leadbetter, M. R.},
   title={Stationary and related stochastic processes. Sample function
   properties and their applications},
   publisher={John Wiley \& Sons, Inc., New York-London-Sydney},
   date={1967},
   pages={xii+348},
   review={\MR{0217860}},
}

\bib{Cuz}{article}{
   author={Cuzick, Jack},
   title={A central limit theorem for the number of zeros of a stationary
   Gaussian process},
   journal={Ann. Probability},
   volume={4},
   date={1976},
   number={4},
   pages={547--556},
  % issn={0091-1798},
%   review={\MR{420809}},
%   doi={10.1214/aop/1176996026},
}

\bib{Dal}{article}{
   author={Dalmao, Federico},
   title={Asymptotic variance and CLT for the number of zeros of Kostlan
   Shub Smale random polynomials},
   language={English, with English and French summaries},
   journal={C. R. Math. Acad. Sci. Paris},
   volume={353},
   date={2015},
   number={12},
   pages={1141--1145},
  % issn={1631-073X},
%   review={\MR{3427922}},
%   doi={10.1016/j.crma.2015.09.016},
}

\bib{DM}{article}{
   author={Dobrushin, R. L.},
   author={Major, P.},
   title={Non-central limit theorems for nonlinear functionals of Gaussian
   fields},
   journal={Z. Wahrsch. Verw. Gebiete},
   volume={50},
   date={1979},
   number={1},
   pages={27--52},
%   issn={0044-3719},
%   review={\MR{550122}},
%   doi={10.1007/BF00535673},
}

\bib{Er}{article}{
   author={Erd\"{o}s, Paul},
   title={On a family of symmetric Bernoulli convolutions},
   journal={Amer. J. Math.},
   volume={61},
   date={1939},
   pages={974--976},
%   issn={0002-9327},
%   review={\MR{311}},
%   doi={10.2307/2371641},
}

%\bib{F0}{article}{
%	author= {Feldheim, Naomi D.},
%	title={Zeroes of Gaussian analytic functions with translation-invariant distribution},
%	journal={Israel Journal of Mathematics},
%	date={2013},
%	volume={195},
%	pages={317-345}
%}

\bib{Nvar}{article}{
   author={Feldheim, Naomi D.},
   title={Variance of the number of zeroes of shift-invariant Gaussian
   analytic functions},
   journal={Israel J. Math.},
   volume={227},
   date={2018},
   number={2},
   pages={753--792},
   %issn={0021-2172},
%   review={\MR{3846340}},
%   doi={10.1007/s11856-018-1737-6},
}

%\bib{FH}{article}{
%   author={Feng, De-Jun},
%   author={Hu, Huyi},
%   title={Dimension theory of iterated function systems},
%   journal={Comm. Pure Appl. Math.},
%   volume={62},
%   date={2009},
%   number={11},
%   pages={1435--1500},
%%   issn={0010-3640},
%%   review={\MR{2560042}},
%%   doi={10.1002/cpa.20276},
%}
%
\bib{Gem}{article}{
   author={Geman, Donald},
   title={On the variance of the number of zeros of a stationary Gaussian
   process},
   journal={Ann. Math. Statist.},
   volume={43},
   date={1972},
   pages={977--982},
   %issn={0003-4851},
%   review={\MR{301791}},
%   doi={10.1214/aoms/1177692560},
}

\bib{G}{article}{
   author={Gou\"{e}zel, S\'{e}bastien},
   title={M\'{e}thodes entropiques pour les convolutions de Bernoulli [d'apr\`es
   Hochman, Shmerkin, Breuillard, Varj\'{u}]},
   journal={Ast\'{e}risque},
   number={414, S\'{e}minaire Bourbaki. Vol. 2017/2018. Expos\'{e}s 1136--1150},
   date={2019},
   pages={251--288},
%   issn={0303-1179},
%   isbn={978-2-85629-915-9},
%   review={\MR{4093201}},
%   doi={10.24033/ast.1086},
}

\bib{Gre}{article}{
	author={Grenander, Ulf},
	title={Stochastic Processes and Statistical Inference},
	journal={Arkiv for Matematik},
	volume={1 (17)},
	date={1950},
	pages={195--277}
}

\bib{Ito}{article}{
   author={It\^{o}, Kiyoshi},
   title={The expected number of zeros of continuous stationary Gaussian
   processes},
   journal={J. Math. Kyoto Univ.},
   volume={3},
   date={1963/64},
   pages={207--216},
   %issn={0023-608X},
%   review={\MR{166824}},
%   doi={10.1215/kjm/1250524817},
}

\bib{J}{book}{
   author={Janson, Svante},
   title={Gaussian Hilbert spaces},
   series={Cambridge Tracts in Mathematics},
   volume={129},
   publisher={Cambridge University Press, Cambridge},
   date={1997},
   pages={x+340},
%   isbn={0-521-56128-0},
%   review={\MR{1474726}},
%   doi={10.1017/CBO9780511526169},
}

\bib{JW}{article}{
   author={Jessen, B\o rge},
   author={Wintner, Aurel},
   title={Distribution functions and the Riemann zeta function},
   journal={Trans. Amer. Math. Soc.},
   volume={38},
   date={1935},
   number={1},
   pages={48--88},
%   issn={0002-9947},
%   review={\MR{1501802}},
%   doi={10.2307/1989728},
}

\bib{Kac43}{article}{
  	title={On the average number of real roots of a random algebraic equation},
  	author={Kac, Mark},
  	journal={Bulletin of the American Mathematical Society},
  	volume={49},
  	number={4},
  	pages={314--320},
  	year={1943},
  	publisher={American Mathematical Society}
	}
	
\bib{Kac48}{article}{
  	title={On the Average Number of Real Roots of a Random Algebraic Equation (II)},
  	author={Kac, Mark},
  	journal={Proceedings of the London Mathematical Society},
  	volume={2},
  	number={1},
  	pages={390--408},
  	year={1948},
  	publisher={Wiley Online Library}
	}

\bib{Katz}{book}{
	author={Katznelson, Y.},
	title={An introduction to Harmonic Analysis},
	publisher={Cambridge University Press},
	year={2004},
	edition={third edition},
	note={(first published in 1968 by Wiley and Sons)}
	}
	
\bib{Kbook}{book}{
	author = {Koepf, W.},
	title={Hypergeometric Summation},
	subtitle={An Algorithmic Approach to Summation and Special Function Identities},
	series={Universitext},
	year={2014},
	publisher={Springer-Verlag London},
	edition={second edition}
	}

\bib{Ker}{article}{
   author={Kershner, Richard},
   title={On Singular Fourier-Stieltjes Transforms},
   journal={Amer. J. Math.},
   volume={58},
   date={1936},
   number={2},
   pages={450--452},
%   issn={0002-9327},
%   review={\MR{1507167}},
%   doi={10.2307/2371053},
}

\bib{KW}{article}{
   author={Kershner, Richard},
   author={Wintner, Aurel},
   title={On Symmetric Bernoulli Convolutions},
   journal={Amer. J. Math.},
   volume={57},
   date={1935},
   number={3},
   pages={541--548},
%   issn={0002-9327},
%   review={\MR{1507093}},
%   doi={10.2307/2371185},
}

\bib{Kratz}{article}{
	author = {Kratz, Marie F.},
	title={Level crossings and other level functionals of stationary Gaussian processes},
	journal = {Probability Surveys},
	volume={3},
	date={2006},
	pages={230-288}
}

%\bib{KL97}{article}{
%	author={Kratz, M. F.},
%	author={L\'eon, J. R.},
%	title={Hermite polynomial expansion for non-smooth functionals of stationary Gaussian processes: Crossings and extremes},
%	journal ={Stoch. Proc. Appl.},
%	date={1997},
%	volume={66 (2)},
%	pages={237--252}
%}

\bib{KL06}{article}{
	author={Kratz, M. F.},
	author={L\'eon, J. R.},
	title={On the second moment of the number of crossings by a stationary Gaussian process},
	journal={Annals Prob.},
	date={2006},
	volume={34 (4)},
	pages={1601-1607}
	}

\bib{KL10}{article}{
	author={Kratz, M. F.},
	author={L\'eon, J. R.},
	title={Level curves crossings and applications for Gaussian models},
	journal ={Extremes},
	date={2010},
	volume={13 (3)},
	pages={315--351}
	}

\bib{LR}{article}{
  	title={Variance linearity for real Gaussian zeros},
  	author={Lachi{\`e}ze-Rey, Rapha{\"e}l},
  	journal={Online preprint},
    note={arXiv:2006.10341},
  	year={2020},
}

\bib{Mal}{article}{
    	author={Malevich, T. L.},
    	title = {Asymptotic normality of the number of crossings of level zero by a Gaussian process},
	date={1969},
	journal={Theor. Prob. Appl.},
	volume={14},
	pages={287--295}
}

\bib{QuPhD}{book}{
   author={Qualls, Clifford Ray},
   title={On the Limit Distributions of High Level Crossings of a Stationary
   Gaussian Process},
   note={Thesis (Ph.D.)--University of California, Riverside},
   publisher={ProQuest LLC, Ann Arbor, MI},
   date={1967},
   pages={74},
  % review={\MR{2616812}},
}

%\bib{QuArt}{article}{
%   author={Qualls, Clifford},
%   title={On a limit distribution of high level crossings of a stationary
%   Gaussian process},
%   journal={Ann. Math. Statist.},
%   volume={39},
%   date={1968},
%   pages={2108--2113},
%  % issn={0003-4851},
%%   review={\MR{238385}},
%%   doi={10.1214/aoms/1177698039},
%}
	
\bib{Rice44}{article}{
  	title={Mathematical analysis of random noise},
	author={Rice, Stephen O.},
	journal={The Bell System Technical Journal},
	volume={23},
	number={3},
	pages={282--332},
	year={1944},
	publisher={Nokia Bell Labs}
	}
	
\bib{Rice45}{article}{
  	title={Mathematical analysis of random noise},
  	author={Rice, Stephen O.},
  	journal={The Bell System Technical Journal},
  	volume={24},
  	number={1},
  	pages={46--156},
  	year={1945},
  	publisher={Nokia Bell Labs}
	}

\bib{Sa}{article}{
   author={Salem, R.},
   title={Sets of uniqueness and sets of multiplicity},
   journal={Trans. Amer. Math. Soc.},
   volume={54},
   date={1943},
   pages={218--228},
%   issn={0002-9947},
%   review={\MR{8428}},
%   doi={10.2307/1990330},
}

\bib{SSZW}{article}{
  	title={Short Time Frequency Measurement of Narrow Band Random Signals by Means of a Zero Counting Process},
  	author={Herbert Steinberg},
    author={Peter M. Schultheiss},
    author={Conrad A. Wogrin},
    author={Felix Zweig},
  	journal={Journal of Applied Physics},
  	volume={26},
  	number={2},
  	pages={195--201},
  	year={1955},
	}

\bib{Slud91}{article}{
   author={Slud, Eric V.},
   title={Multiple Wiener-It\^{o} integral expansions for level-crossing-count
   functionals},
   journal={Probab. Theory Related Fields},
   volume={87},
   date={1991},
   number={3},
   pages={349--364},
   %issn={0178-8051},
%   review={\MR{1084335}},
%   doi={10.1007/BF01312215},
}

\bib{Slud94}{article}{
  	title={MWI representation of the number of curve-crossings by a differentiable Gaussian process, with applications},
  	author={Slud, Eric V.},
  	journal={The Annals of probability},
  	pages={1355--1380},
  	year={1994},
  	publisher={JSTOR}
	}

\bib{V}{article}{
   author={Varj\'{u}, P\'{e}ter P.},
   title={Recent progress on Bernoulli convolutions},
   conference={
      title={European Congress of Mathematics},
   },
   book={
      publisher={Eur. Math. Soc., Z\"{u}rich},
   },
   date={2018},
   pages={847--867},
%   review={\MR{3890454}},
}

\bib{VR}{article}{
   author={Volkonski\u{\i}, V. A.},
   author={Rozanov, Ju. A.},
   title={Some limit theorems for random functions. II},
   journal={Theory of Probability and its Applications},
   volume={6},
   number={2},
   date={1961},
   pages={186--198},
}

\bib{Walters}{book}{
	author={Walters, P.},
	title={An introduction to ergodic theory},
	series={Graduate texts in Mathematics},
	volume={79},
	publisher={Springer-Verlag NY},
	year={1982}
}

\bib{Win}{article}{
   author={Wintner, Aurel},
   title={On Convergent Poisson Convolutions},
   journal={Amer. J. Math.},
   volume={57},
   date={1935},
   number={4},
   pages={827--838},
%   issn={0002-9327},
%   review={\MR{1507116}},
%   doi={10.2307/2371018},
}

%\bib{Wint}{article}{
%   author={Wintner, Aurel},
%   title={On symmetric Bernoulli convolutions},
%   journal={Bull. Amer. Math. Soc.},
%   volume={41},
%   date={1935},
%   number={2},
%   pages={137--138},
%%   issn={0002-9904},
%%   review={\MR{1563036}},
%%   doi={10.1090/S0002-9904-1935-06035-5},
%}

\bib{Ylv}{article}{
   author={Ylvisaker, N. Donald},
   title={The expected number of zeros of a stationary Gaussian process},
   journal={Ann. Math. Statist.},
   volume={36},
   date={1965},
   pages={1043--1046},
  % issn={0003-4851},
%   review={\MR{177458}},
%   doi={10.1214/aoms/1177700077},
}

\end{biblist}
\end{bibdiv}

\end{document}